\documentclass[12pt]{article}
\usepackage[utf8]{inputenc}
\usepackage{amsmath}
\usepackage{amsthm}
\usepackage{amssymb}
\usepackage[T1]{fontenc}

\usepackage{xspace}
\usepackage{graphicx}
\usepackage{yfonts}
\usepackage{amsthm}
\usepackage{amssymb}
\usepackage[all]{xy}
\usepackage{amsmath}
\usepackage{savesym}
\usepackage{amsmath}
\savesymbol{iint}
\usepackage{txfonts}
\usepackage{enumitem}
\newtheorem{theorem}{Theorem}[section]
\newtheorem{proposition}[theorem]{Proposition}
\newtheorem{lemma}[theorem]{Lemma}
\newtheorem{corollary}[theorem]{Corollary}
\newtheorem{remark}[theorem]{Remark}

\newcounter{cste}

\newcommand\g{\gamma\xspace}

\newcommand\gn{\gamma^{(n)}}
\newcommand\gku{\gamma^{(k+1)}}
\newcommand\gk{\gamma^{(k)}}
\newcommand\ddd{\partial}
\newcommand\G{\Gamma}

\newcounter{cster}

\newcommand\rr{\refstepcounter{cster}R_{\thecster}}

\newcounter{csterr}

\newcommand\rrr{\refstepcounter{csterr}R_{\thecsterr}}

\usepackage{url}

\newcounter{cstehigh}

\newcounter{csten}

\author{Mostapha Benhenda}
\title{Smooth Pseudo-Rotations Measure-Theoretically Isomorphic to Circle Rotations of Rationally Independent Angle}
\date{February 10, 2012}
\begin{document}

\maketitle

Let $M$ be a smooth compact connected manifold, on which there exists an effective smooth circle action $S_t$ preserving a positive smooth volume. In this chapter, we show that on $M$, the smooth closure of the smooth volume-preserving conjugation class of some Liouville rotations $S_\alpha$ of angle $\alpha$ contains a smooth volume-preserving diffeomorphism $T$ that is metrically isomorphic to an irrational rotation $R_\beta$ on the circle, with $\alpha \not\eq \pm \beta$, and with $\alpha$ and $\beta$ chosen either rationally dependent or rationally independent. In particular, if $M$ is the closed annulus $[0,1] \times \varmathbb{T}^1$, $M$ admits a smooth ergodic pseudo-rotation $T$ of angle $\alpha$ that is metrically isomorphic to the rotation $R_\beta$. Moreover, $T$ is smoothly tangent to $S_{\alpha} $ on the boundary of $M$.

\section{Introduction}

%Let $M$ be the annulus $\varmathbb{A}=[0,1] \times \varmathbb{T}^1$ or the torus $\varmathbb{T}^2$,

Let $\varmathbb{A}=[0,1] \times \varmathbb{T}^1$ be the closed annulus and $T$ be a homeomorphism isotopic to the identity. The \textit{rotation set} of $T$ measures the asymptotic speeds of rotation of the orbits of $T$ around the annulus. It generalizes the notion of rotation number of a circle homeomorphism, introduced by Poincar\'e. $T$ is an \textit{irrational pseudo-rotation} if its rotation set is reduced to a single irrational number $\alpha$, called the \textit{angle} of $T$. A broad question is raised by B\'eguin et al. \cite{beguinal04}: what are the similarities between the dynamics of the rigid rotation $S_\alpha$ of angle $\alpha$ and the dynamics of an irrational pseudo-rotation $T$ of angle $\alpha$? 

From a topological viewpoint, a similarity between $S_\alpha$ and $T$ (but with $T$ only $C^0$) has been shown by B\'eguin et al. \cite{beguinal04}: the rotation $S_\alpha$ is in the $C^0$-closure of the conjugacy class of $T$. Their result is analogous to a theorem by Kwapisz \cite{kwapisz03} on the torus $\varmathbb{T}^2$ (in this case, the angle of a pseudo-rotation is an element of $\varmathbb{T}^2$). J\"ager \cite{jager09} and Wang \cite{wang11} also investigated this broad question. However, there are also possible differences between $S_\alpha$ and $T$. From a metric viewpoint, Anosov and Katok \cite{anosovkatok70} constructed a smooth pseudo-rotation of $\varmathbb{A}$ that is metrically isomorphic to an ergodic translation of $\varmathbb{T}^2$. B\'eguin et al. \cite{beguinal07} constructed on $\varmathbb{T}^2$ a pseudo-rotation that is minimal, uniquely ergodic, but with positive entropy. In this paper, we construct a smooth pseudo-rotation of angle $\alpha$ that is metrically isomorphic to an irrational rotation $R_\beta$ with $\alpha \not\eq \pm \beta$. This is a construction of a non-standard smooth realization, based on the method of approximation by successive conjugations (see \cite{katokfayad04} for a presentation), a method that is often fruitful in smooth realization problems.

We recall that a \textit{smooth realization} of an abstract system $(X,f,\nu)$ is a triplet $(M,T,\mu)$, where $M$ is a smooth compact manifold, $\mu$ a smooth measure on $M$ and $T$ a smooth $\mu$-preserving diffeomorphism of $M$, such that $(M,T,\mu)$ is metrically isomorphic to $(X,f,\nu)$ (when $(M,\mu)$ and $(X,\nu)$ are implied, we just say that $T$ is metrically isomorphic to $f$). Moreover, a smooth realization is \textit{non-standard} if $M$ and $X$ are not diffeomorphic. 

Suppose there exists an ergodic pseudo-rotation $T$ of angle $\alpha$ that is a non-standard smooth realization  of a rotation $R_\beta$ on the circle. Then the couple $(\alpha,\beta)$ is called a \textit{non-standard couple of angles}. In this paper, we show that there exists non-standard couple of angles $(\alpha,\beta)$, such that $\alpha \not\eq \pm \beta$, with $\alpha$ and $\beta$ chosen either rationally dependent or rationally independent. 

Anosov and Katok \cite{anosovkatok70} showed the existence of an angle $\alpha$ such that $(\alpha,\alpha)$ is a non-standard couple of angles. Fayad et al. \cite{windsor07} showed that for any $\alpha$ Liouville, $(\alpha,\alpha)$ is a non-standard couple of angles. The question arises about the existence of a non-standard couple of angles $(\alpha, \beta)$ with $\alpha \not\eq \beta$.

It is worthy to recall that two ergodic rotations $R_\alpha$ and $R_\beta$ on the circle are metrically isomorphic if and only if $\beta= \pm \alpha$. If $\beta=\alpha$, the isomorphism is the identity, and if $\beta=-\alpha$, an isomorphism is given by a symmetry of axis going through the center of the circle. Therefore, by applying the result of Fayad et al. \cite{windsor07}, it becomes trivial to find a non-standard couple of angles $(\alpha,-\alpha)$. Our result shows that if, instead of considering metric automorphisms of the circle, we consider metric isomorphisms between the circle and the annulus, the situation becomes richer: we can have $\alpha \not\eq \pm \beta$, with $\alpha$ and $\beta$ either rationally dependent or rationally independent. However, $\alpha$ needs to be Liouville. Indeed, a result by Herman (with a proof published by Fayad and Krikorian \cite{fayadkrikorian09}) implies that if a smooth quasi-rotation $T$ of the closed annulus has Diophantine angle (i.e. non-Liouville), then $T$ cannot be ergodic (and a fortiori, $T$ cannot be metrically isomorphic to an ergodic rotation). However, the situation where $\alpha$ is Liouville and $\beta$ is Diophantine, though not addressed in this paper, is not excluded yet. The existence of this situation would reply positively to the open question about the existence of a non-standard smooth realization of a Diophantine circle rotation \cite{katokfayad04}.

More generally, let $M$ be a smooth compact connected manifold of dimension $d$, on which there exists an effective smooth circle action $S_t$ preserving a positive smooth measure $\mu$. Let $\mathcal{A}_\alpha$ be the smooth conjugation class of the rotation $S_\alpha$, and $\bar{\mathcal{A}_\alpha}$ its closure in the smooth topology. If $M=\varmathbb{T}^1$ and if $\alpha$ is Diophantine, then $\bar{\mathcal{A}_\alpha}=\mathcal{A}_\alpha$ by Herman-Yoccoz theorem \cite{yoccoz84} (indeed, by continuity, the rotation number of a diffeomorphism $T \in \bar{\mathcal{A}_\alpha}$ is $\alpha$). On the other hand, when $\alpha$ is Liouville, $\bar{\mathcal{A}_\alpha} \not\eq \mathcal{A}_\alpha$. In this paper, if $M$ has a dimension $d \geq 2$, then for some Liouville $\alpha$, we show that $\bar{\mathcal{A}_\alpha}$ contains non-standard smooth realizations of circle rotations $R_\beta$, with $\alpha \not\eq \pm \beta$, and with $\alpha$ and $\beta$ chosen either rationally dependent or rationally independent. In this case, $(\alpha,\beta)$ is still called a \textit{non-standard couple of angles}. More precisely, we show the following theorem:

%Let Diff$^\infty(M,\mu)Let $\mathcal{A}_\alpha= \{ B^{-1} S_\alpha B, B \in Diff$^\infty(M,\mu)  \}$ or of the disk $\varmathbb{D}^2$,

\begin{theorem}
\label{theoremdebaserotrotfonda}
Let $M$ be a smooth compact connected manifold of dimension $d \geq 2$, on which there exists an effective smooth circle action $(S_t)_{t \in \varmathbb{T}^1}$ preserving a positive smooth measure $\mu$. For any $u,v \in \varmathbb{T}^1$, for any $\epsilon >0$, there exist $(\alpha,\beta) \in \varmathbb{T}^1 \times \varmathbb{T}^1$ in a $\epsilon$-neighborhood of $(u,v)$, $T \in$ Diff$^\infty(M,\mu)$, such that $T \in \bar{\mathcal{A}_\alpha}$ and such that the rotation $R_\beta$ of angle $\beta$ on $\varmathbb{T}^1$ is metrically isomorphic to $T$. Moreover, $\beta$ can be chosen either rationally dependent or rationally independent of $\alpha$.
\end{theorem}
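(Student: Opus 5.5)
We will use the Anosov--Katok method of approximation by successive conjugations, in the version adapted to non-standard realizations (as in Fayad et al.\ for $(\alpha,\alpha)$), but carrying along a second sequence of rational angles for the circle side. We choose rationals $\alpha_n = p_n/q_n \to \alpha$ and $\beta_n = p'_n/q'_n \to \beta$ with $q'_n$ a fixed multiple or divisor of $q_n$, set up so that the combinatorics of $S_{\alpha_n}$ on $M$ and of $R_{\beta_n}$ on $\varmathbb{T}^1$ match. Concretely, take $q'_n = q_n$ and $p'_n \equiv a\, p_n \pmod{q_n}$ with $a$ an integer invertible mod $q_n$ (the rationally dependent case, $\beta \approx a\alpha$ modulo suitable corrections). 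Alternatively, choose $p'_n$ independently so that both cyclic permutations have the same cycle structure on $q_n$ atoms (the rationally independent case). In both cases $\alpha_{n+1},\beta_{n+1}$ are chosen very close to $\alpha_n,\beta_n$, starting near $(u,v)$, so that the limits lie in the $\epsilon$-neighborhood. Liouvilleness of $\alpha$ and the rational (in)dependence of $\beta$ are arranged by the choice of the sequences: for example, $\beta = a\alpha + r$ with $r$ rational in the dependent case, and a Cantor-type choice in the independent case.

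Set $T_n = B_n^{-1} S_{\alpha_{n+1}} B_n$ with $B_n = h_n \circ \cdots \circ h_1$, where each $h_n$ commutes with $S_{\alpha_n}$ (so $h_n$ is $S_{1/q_n}$-equivariant) and preserves $\mu$. By closeness of $\alpha_{n+1}$ to $\alpha_n$, $T_n$ converges in $C^\infty$ to some $T \in \bar{\mathcal{A}_\alpha}$; this is the standard Anosov--Katok convergence estimate $d_k(T_n,T_{n-1}) \le \|B_n\|_{C^{k+1}}^{k+1}\,|\alpha_{n+1}-\alpha_n|$. Away from fixed points of the action, one works in a local model $[0,1]^{d-1}\times\varmathbb{T}^1$ where $S_t$ is translation in the last coordinate. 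Near the singular set, the $h_n$ are equal to the identity on a neighborhood whose measure tends to $0$.

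For the isomorphism, we build a generating sequence of partitions $\xi_n$ of $M$ given by $B_n^{-1}$ of a partition into $q_n$ fundamental domains of $S_{1/q_n}$, each subdivided into $s_n$ thin pieces, with $s_n\to\infty$. In parallel we build partitions $\eta_n$ of $\varmathbb{T}^1$ into $q_n s_n$ intervals. Using the Katok--Stepin type lemma on isomorphism via monotone sequences of partitions with conjugated periodic approximations, we need bijections $K_n:\xi_n\to\eta_n$ that preserve measure, intertwine $T_n$ with $R_{\beta_{n+1}}$ as permutations, and are coherent (so that $K_{n+1}$ refines $K_n$). The key design is $h_{n+1}$: inside each $S_{1/q_n}$-fundamental domain it should permute the thin pieces so that the action of $S_{\alpha_{n+1}}$, viewed through $\xi_{n+1}$, realizes exactly the cyclic combinatorics of $R_{\beta_{n+1}}$ on $\eta_{n+1}$. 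Here $S_{\alpha_{n+1}}$ and $R_{\beta_{n+1}}$ act by different rotation numbers $p_{n+1}$ versus $p'_{n+1}$ mod $q_{n+1}$, so the required permutation relabels atom $j$ as atom $a j$ modulo $q_{n+1}$. This relabeling is smoothly realizable by a measure-preserving diffeomorphism of the model cell, approximated up to a set of small measure, which is how the $\pm$ obstruction on the circle is circumvented. Finally, we verify that $\xi_n$ is generating, which follows because $B_n$ stretches the thin pieces in the transverse directions so that their diameters go to zero outside a set of measure $\to 0$.

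The main obstacle is the compatibility of these combinatorics across steps. The relabeling $j\mapsto aj$ at level $n+1$ must commute with $S_{1/q_n}$ and refine the level-$n$ relabeling, while $h_{n+1}$ stays smooth, measure preserving, and equivariant. I would first try taking $q_{n+1}$ a multiple of $q_n$ chosen (with $p_{n+1}$) so that the induced permutation of the $q_{n+1}/q_n$ sub-blocks in each level-$n$ block is itself a rotation. This reduces the construction to an explicit permutation of rectangles within one fundamental domain, which is realized smoothly by the standard ``$\mu$-preserving rectangle permutation'' diffeomorphisms up to an $\epsilon_n$-small error set, with $\sum \epsilon_n<\infty$.
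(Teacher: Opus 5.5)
Your architecture matches the paper's: successive conjugations, the Anosov--Katok isomorphism lemma for monotone generating partitions, relabeling of atoms by a multiplier, and conjugacies equal to the identity near the singular set. Your dependent-case pairing of $S_{p_n/q_n}$ on $M$ with $R_{ap_n/q_n}$ on the circle is also exactly the paper's pairing $S_{p_n/q_n}\leftrightarrow R_{b_np_n/q_n}$ with $b_n=b_0$.

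\textbf{The gap: convergence.} You treat convergence as the standard Anosov--Katok estimate, with $\alpha_{n+1}$ chosen after $B_n$, but that order is not available here. Linking the level-$n$ and level-$(n+1)$ partitions needs a map that is $S_{1/q_n}$-equivariant but not $S_{1/q_{n+1}}$-equivariant. In your indexing this map is $h_n$, which commutes with $S_{\alpha_n}$. It must approximately carry $I\times[0,1/q_n[$ onto $R^{(n)}=K_n^{n+1}(I\times[0,1/q_n[)$, a union of $q_{n+1}/q_n$ strips of width $1/q_{n+1}$ placed according to the level-$(n+1)$ multiplier.

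So $B_n$ already depends on the denominator $q_{n+1}$ of $\alpha_{n+1}$, while $|\alpha_{n+1}-\alpha_n|\ge 1/q_{n+1}$. If $h_n$ is a generic smooth realization of a permutation of these thin strips, then $\|h_n\|_{k+1}$ is polynomial in $q_{n+1}$, and $\|B_n\|_{k+1}^{k+1}|\alpha_{n+1}-\alpha_n|$ does not tend to zero. This is the obstruction the paper states just before Lemma \ref{estdern}.

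Note also that, as written, your $h_{n+1}$ must both commute with $S_{\alpha_{n+1}}$ and realize $j\mapsto aj$ on the $1/q_{n+1}$-atoms. For $a\not\equiv 1$ this is impossible: an atom permutation commuting with $S_{1/q_{n+1}}$ commutes with $j\mapsto j+1$, hence is a translation.

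\textbf{The missing idea.} It is the quantitative Lemma \ref{estdern}. When $a_{n+1}b_{n+1}\equiv 1 \pmod{q_{n+1}}$, the strips of $R^{(n)}$ stack into only $b_{n+1}$ intervals of width about $1/(b_{n+1}q_n)$. Hence $A_{n+1}=B_{n+1}B_n^{-1}$ can be built with $\|A_{n+1}\|_{n+1}\le (b_{n+1}q_n)^{R(n)}$, independently of $q_{n+1}$. The generation step likewise uses slices of width $w/q_{n+1}$, with $q_{n+1}/w$ polynomial in level-$n$ data.

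Convergence then only needs condition 4 of Lemma \ref{lemmefondarotisom}, which is met by taking $q_{n+1}$ huge while $b_{n+1}$ stays polynomial in $q_n$. Conditions 1--3 make the $K_n$ coherent and force $q_n\mid b_{n+1}-b_n$, which is what makes $b_np_n/q_n$ converge.

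Your dependent case can be repaired with exactly this estimate. For fixed $a$ prime to every $q_n$, the inverses $a_n$ of $a$ modulo $q_n$ automatically satisfy $q_n\mid a_{n+1}-a_n$, and $R^{(n)}$ has only $a$ components.

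Your independent case fails as stated. Choosing $p'_n$ freely with the same cycle structure makes the relabeling multiplication by a generic unit mod $q_{n+1}$, whose inverse is typically of order $q_{n+1}$. Then $R^{(n)}$ has about $q_{n+1}/q_n$ separated strips, the conjugacy costs a power of $q_{n+1}$, and coherence $q_n\mid a_{n+1}-a_n$ generally breaks. The paper instead lets the multiplier grow slowly, $b_{n+1}=b_n+q_n$. It obtains rational independence by proving the translation by $(\alpha,\beta)$ on $\varmathbb{T}^2$ ergodic via the Katok--Stepin criterion, not by an unspecified Cantor-type choice.
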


Theorem \ref{theoremdebaserotrotfonda} generalizes the particular case $M=[0,1]^{d-1} \times \varmathbb{T}^1$:

\begin{theorem}
\label{theoremdebaserotrotrotisom}
Let $d \geq 2$, $M= [0,1]^{d-1} \times \varmathbb{T}^1$, $\mu$ the Lebesgue measure. For $t \in \varmathbb{T}^1$, let $S_t: M \rightarrow M$ defined by $S_t(x,s)=(x,s+t)$. For any $u,v \in \varmathbb{T}^1$, for any $\epsilon >0$, there exist $(\alpha,\beta) \in \varmathbb{T}^1 \times \varmathbb{T}^1$ in a $\epsilon$-neighborhood of $(u,v)$, $T \in$ Diff$^\infty(M,\mu)$, such that for any $j \in \varmathbb{N}$, $(D^j T)_{|\ddd M}= (D^j S_{\alpha})_{ |\ddd M}$ and such that the rotation $R_\beta$ of angle $\beta$ on $\varmathbb{T}^1$ is metrically isomorphic to $T$. Moreover, $\beta$ can be chosen either rationally dependent or rationally independent of $\alpha$.
\end{theorem}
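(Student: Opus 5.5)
We would prove Theorem \ref{theoremdebaserotrotrotisom} by the Anosov--Katok method of approximation by successive conjugations, in the form used by Fayad et al.\ \cite{windsor07} for the case $(\alpha,\alpha)$, modified so that the limit is isomorphic to $R_\beta$ rather than to $R_\alpha$.

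\textbf{Construction.} Build sequences of rationals $\alpha_n = p_n/q_n \to \alpha$ and $\beta_n = p'_n/q'_n \to \beta$, area-preserving diffeomorphisms $h_n$ equal to the identity (to infinite order) near $\partial M$, and $H_n = h_1\circ\cdots\circ h_n$. Set $T_n = H_n S_{\alpha_{n+1}} H_n^{-1}$, so that $T_n \in \mathcal{A}_{\alpha_{n+1}}$. The new conjugacy $h_{n+1}$ is chosen to commute with $S_{\alpha_{n+1}}$ (it is $1/q_{n+1}$-periodic in $s$), so that $T_n = H_{n+1}S_{\alpha_{n+1}}H_{n+1}^{-1}$. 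Hence $d_{C^k}(T_{n+1},T_n)$ is controlled by $\|H_{n+1}\|_{C^{k+1}}\,|\alpha_{n+2}-\alpha_{n+1}|$. Choosing $\alpha_{n+2}$ extremely close to $\alpha_{n+1}$ (Liouville $\alpha$) makes $T_n$ converge in $C^\infty$ to some $T$. This $T$ lies in $\bar{\mathcal{A}_\alpha}$ and satisfies $(D^jT)_{|\partial M}=(D^jS_\alpha)_{|\partial M}$. The freedom in each $\alpha_{n+1}$ lets us place $(\alpha,\beta)$ within $\epsilon$ of $(u,v)$, and lets us impose either a fixed rational relation between $\alpha$ and $\beta$ or rational independence. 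For the latter, at step $n$ we exclude the finitely many relations of height $\le n$, which is an open condition.

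\textbf{Isomorphism.} The combinatorial core is as follows. Inside a fundamental domain $[0,1]^{d-1}\times[0,1/q_{n+1})$ of $S_{\alpha_{n+1}}$, the map $h_{n+1}$ approximately permutes small cubes. We want the tower of $S_{\alpha_{n+1}}$ on $M$, read through $H_{n+1}$, to match combinatorially the tower of $R_{\beta_{n+1}}$ on $\varmathbb{T}^1$ with $q'_{n+1}$ levels, where $q'_{n+1}$ is a multiple of $q_{n+1}$. We then define finite partitions $\xi_n$ of $M$, given by the images under $H_n$ of these cells, and partitions $\zeta_n$ of the circle into intervals of length $1/q'_n$, together with measure-preserving bijections $K_n:\xi_n\to\zeta_n$ such that:
\begin{enumerate}[label=(\roman*)]
\item $K_n$ conjugates $T_n$ to $R_{\beta_n}$ on the partitions;
\item $K_{n+1}$ refines $K_n$ up to error sets of measure $\varepsilon_n$ with $\sum \varepsilon_n<\infty$;
\item $\xi_n$ and $\zeta_n$ are generating, that is, they converge to the partition into points.
\end{enumerate}
The standard abstract lemma (Katok's criterion, as in \cite{anosovkatok70,windsor07}) then gives a metric isomorphism between $T$ and $R_\beta$.

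\textbf{Main obstacle.} The hard step is designing $h_{n+1}$ so that the rotation by $\alpha_{n+1}$ in the $s$-direction on $M$ emulates the rotation by the \emph{different} angle $\beta_{n+1}$ on the circle. The trick we would try first is a reordering. Since $\alpha_{n+1}$ and $\beta_{n+1}$ are both rationals with denominators dividing $q'_{n+1}$, their actions on $\varmathbb{Z}/q'_{n+1}$ are each cyclic. They are conjugate by the multiplication map $x\mapsto cx$, with $c\equiv p'_{n+1}p_{n+1}^{-1}$ modulo the appropriate denominator, provided the numerators are coprime to the denominators. So $K_{n+1}$ should send the $i$-th level of the $\alpha$-tower to the $c\,i$-th level of the $\beta$-tower; this reindexing is purely abstract and costs nothing in smoothness.

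The real difficulty is then that the cells must shrink in all directions, so that $\xi_n$ generates. This forces $h_{n+1}$ to mix the $[0,1]^{d-1}$ directions with the $s$-direction in a way that still commutes with $S_{1/q_{n+1}}$. We would copy the ``quasi-rotation by blocks'' diffeomorphisms of \cite{windsor07}: these rotate small squares approximately by a quarter turn, are equal to the identity near the boundary, and have $C^k$ norms bounded by a polynomial in $q_{n+1}$. The final task is to check that these norm bounds are compatible with the rapid convergence $|\alpha_{n+2}-\alpha_{n+1}|\ll \|H_{n+1}\|_{C^{n+1}}^{-1}q_{n+1}^{-n}$, and that the measure of the error sets is summable.
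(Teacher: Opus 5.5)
Your proposal has a genuine gap: the multiplicative reindexing does not ``cost nothing in smoothness''. At a single scale it is free. Your condition (ii), however, requires it to be coherent across scales, and the whole smoothness cost lies there.

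\emph{Why coherence is expensive.} With the paper's $K_n\colon[i/q_n,(i+1)/q_n[\,\mapsto I\times[ia_n/q_n,(ia_n+1)/q_n[$, coherence already needs the arithmetic $q_n\mid q_{n+1}$, $q_n\mid a_{n+1}-a_n$ and $a_nb_n\equiv 1 \pmod{q_n}$ (Lemma \ref{diagcomutrotisom}). Even then, $K_{n+1}$ does not send a level-$n$ interval to a strip of width $1/q_n$. It sends it to $R^{(n)}$ (or a translate), which consists of $q_{n+1}/q_n$ slices of width $1/q_{n+1}$ at positions $a_{n+1}i/q_{n+1}$, spread over the whole circle. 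So the conjugacy linking scales $n$ and $n+1$ must carry $I\times[0,1/q_n[$ approximately onto $R^{(n)}$.

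\emph{Why your ordering fails.} Such a map cannot commute with $S_{1/q_{n+1}}$. Any $h$ that does sends $\bigcup_{i\in A}S_{i/q_{n+1}}B$ (with $B$ the base strip and $A=\{a_{n+1}i\}$) to $\bigcup_{i\in A}S_{i/q_{n+1}}h(B)$. This can be close to a block of consecutive levels only if $A$ is itself nearly a block of consecutive residues, which is essentially the untwisted case $a_{n+1}\equiv\pm1$. Hence the linking conjugacy commutes only with $S_{1/q_n}$ and necessarily depends on $q_{n+1}$ and $a_{n+1}$. In your indexing, $h_{n+1}$ would therefore depend on $q_{n+2}$, so ``fix $h_{n+1}$, then choose $\alpha_{n+2}$ close enough'' is circular. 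Moreover, $|\alpha_{n+1}-\alpha_n|\geq 1/q_{n+1}$. So any norm bound polynomial in the finer denominator destroys the Cauchy estimate. That includes what a naive rearrangement of the $q_{n+1}/q_n$ slices gives, and your ``polynomial in $q_{n+1}$'' quasi-rotation bounds.

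\emph{The idea you are missing.} The paper breaks the circularity arithmetically. It imposes $a_nb_n-s_nq_n=1$ and fixes the inverse multiplier $b_{n+1}$ \emph{before} $q_{n+1}$, small compared to it. Because $a_{n+1}b_{n+1}\equiv 1 \pmod{q_{n+1}}$, the slices of $R^{(n)}$ stack into only $b_{n+1}$ connected components of width about $1/(b_{n+1}q_n)$ (Lemma \ref{estdern}). The linking conjugacy therefore satisfies $\|A_{n+1}\|_{n+1}\leq (b_{n+1}q_n)^{R(n)}$, independently of $q_{n+1}$. This includes the generation step, where $w$ is chosen so that $q_{n+1}/w$ does not depend on $q_{n+1}$. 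Only then is $q_{n+1}$ taken so large that $|p_{n+1}/q_{n+1}-p_n/q_n|\leq (b_{n+1}q_n)^{-R(n)}$.

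\emph{Consequences for your setup.} The same constraint dictates where the twist sits. $T_n=B_n^{-1}S_{p_n/q_n}B_n$ is paired with $R_{b_np_n/q_n}$. Pairing $R_{p_n/q_n}$ with $B_n^{-1}S_{a_np_n/q_n}B_n$ fails, because $a_{n+1}$ is huge. So $\beta=\lim b_n\alpha_n$ with $q_n\mid b_{n+1}-b_n$; it is not an independently chosen $\beta_n=p'_n/q'_n$. The two towers must also have the same period, not $q'_{n+1}$ a multiple of $q_{n+1}$.

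With $\beta_n=b_n\alpha_n$ and $b_n\to\infty$ (as in the paper's $b_{n+1}=b_n+q_n$), your ``exclude relations of height $\leq n$'' argument is a reasonable substitute for the Katok--Stepin argument. The generation issue you single out is real but secondary.
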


In the case of the closed annulus $M=[0,1] \times \varmathbb{T}^1$, we obtain:

\begin{corollary}
\label{corpseudorotrotisom}
Let $M= [0,1] \times \varmathbb{T}^1$, $\mu$ the Lebesgue measure. For $t \in \varmathbb{T}^1$, let $S_t: M \rightarrow M$ defined by $S_t(x,s)=(x,s+t)$. For any $u,v \in \varmathbb{T}^1$, for any $\epsilon >0$, there exist $(\alpha,\beta) \in \varmathbb{T}^1 \times \varmathbb{T}^1$ in a $\epsilon$-neighborhood of $(u,v)$, $T \in$ Diff$^\infty(M,\mu)$ a pseudo-rotation of angle $\alpha$, such that the rotation $R_\beta$ of angle $\beta$ on $\varmathbb{T}^1$ is metrically isomorphic to $T$. Moreover, $\beta$ can be chosen either rationally dependent or rationally independent of $\alpha$.
\end{corollary}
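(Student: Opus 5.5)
The plan is to deduce Corollary \ref{corpseudorotrotisom} from Theorem \ref{theoremdebaserotrotrotisom} with $d=2$. That theorem gives $(\alpha,\beta)$ $\epsilon$-close to $(u,v)$, with $\beta$ rationally dependent or independent of $\alpha$ as desired. It also gives $T\in \mathrm{Diff}^\infty(M,\mu)$, metrically isomorphic to $R_\beta$, such that $(D^jT)_{|\ddd M}=(D^jS_\alpha)_{|\ddd M}$ for all $j$. Two things remain to check: that $T$ is isotopic to the identity, so that its rotation set is defined, and that this rotation set is exactly $\{\alpha\}$. It is worth noting that $\alpha$ must be irrational here. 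Indeed, $R_\beta$ is ergodic only for irrational $\beta$, and the property we need is ergodicity of $T$; if $\alpha$ were rational, the boundary map $T_{|\ddd M}=S_\alpha$ would be periodic, but this alone causes no contradiction, so I would simply use that the construction in Theorem \ref{theoremdebaserotrotrotisom} yields $\alpha$ Liouville, hence irrational.

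For isotopy to the identity, the first option is to use the construction itself. There $T$ is a limit of $T_n=B_n^{-1}S_{\alpha_n}B_n$, with $B_n$ volume-preserving and equal to the identity near $\ddd M$. Each $T_n$ is isotopic to the identity via $B_n^{-1}S_{t}B_n$, and $C^0$-closeness then passes isotopy to $T$. Alternatively, without using the construction: $T$ preserves orientation and each boundary component, and acts trivially on $H_1(\varmathbb{A})$, since on the boundary it equals a rotation. For annulus homeomorphisms this is enough to be isotopic to the identity.

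The main step is identifying the rotation set. Take a lift $\tilde T$ whose restriction to the boundary lines is $(x,s)\mapsto(x,s+\tilde\alpha)$. The rotation set is the set of limits of $\frac{1}{n}(p_2\tilde T^n(z)-p_2(z))$, or equivalently the set of $\int \phi\,d\nu$ over $T$-invariant probabilities $\nu$, where $\phi=p_2\tilde T-p_2$ is the displacement function. Ergodicity of $\mu$ alone is not enough, because other invariant measures might rotate at other speeds. So I would exploit the approximation structure more fully. The displacement of $T_n$ is cohomologous to the constant $\tilde\alpha_n$: we have $\phi_n=\tilde\alpha_n+h_n\circ T_n-h_n$, with $h_n$ the displacement of $B_n$. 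Hence $\phi_n$ has the uniform bound
\[
\Bigl|\sum_{k<m}\phi_n\circ T_n^k-m\tilde\alpha_n\Bigr|\le 2\|h_n\|_{C^0}
\]
for all $m$. The point is to ensure that $\|h_n\|_{C^0}$ stays bounded, since the $B_n$ displace points by at most one turn in the $s$ direction. Then $\tilde T^m$ is approximated by $\tilde T_n^m$ for $m\le m_n$, where $m_n\to\infty$ by fast convergence. This gives $|p_2\tilde T^m z-p_2z-m\tilde\alpha|\le C$ along a sequence of times $m_n$, uniformly in $z$. Since the rotation set of an annulus homeomorphism is the limit of $\frac1m\{p_2\tilde T^m z-p_2 z\}$, with convergence of the maximal and minimal values via subadditivity, uniform control along the times $m_n$ already forces the rotation set to be $\{\alpha\}$. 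This uniform displacement bound is the part I expect to be the main obstacle. If the $B_n$ in the construction are not of bounded displacement, I would modify them by composing with powers of $S_t$ or a fiber-preserving twist so that their lifts have bounded $p_2$-displacement. This does not affect the metric properties of the construction.

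Combining these gives the corollary: $T$ is a smooth pseudo-rotation of angle $\alpha$, metrically isomorphic to $R_\beta$, with $(\alpha,\beta)$ as in Theorem \ref{theoremdebaserotrotrotisom}.
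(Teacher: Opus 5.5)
Your overall route is the paper's. You take $T$ from Theorem~\ref{theoremdebaserotrotrotisom} with $d=2$, and use $T_{|\ddd M}=S_{\alpha|\ddd M}$ to get isotopy to the identity; your second argument for this is the one the paper uses. You then pin down the rotation set by comparing $\tilde T^m$ with $\tilde T_n^m$, where $T_n=B_n^{-1}S_{p_n/q_n}B_n$ has controlled displacement because it is conjugate to a rotation.

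The trouble is the step you call the main obstacle. The claim that $B_n$ moves points by at most one turn is not supported by the construction. Already $A_{n+1}^1$ shifts the slice $\Gamma_i$ by $k_n(l_i)/q_n$, which can be almost a full turn. The displacement of $B_n=A_n\cdots A_1B_0$ accumulates the displacements of all the $A_k$, and nothing prevents it from growing with $n$. Your proposed repair does not help either. Left-composing $B_n$ with $S_t$ or with a twist $(x,s)\mapsto(x,s+\psi(x))$ leaves $T_n$ unchanged. It only adds to $h_n$ a function that is constant on each $T_n$-invariant curve $B_n^{-1}(\{x\}\times\varmathbb{T}^1)$, so the oscillation your bound actually uses is unaffected.

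No uniform bound is needed; you only have to take the limits in the other order. Put
\[
\epsilon_n=\sum_{p\geq n}\|B_{p+1}\|_1\left|\frac{p_{p+1}}{q_{p+1}}-\frac{p_p}{q_p}\right| .
\]
This tends to $0$ by the polynomial estimate on $B_{p+1}$ and condition 4 of Lemma~\ref{lemmefondarotisom}. Since $T_p=B_{p+1}^{-1}S_{p_p/q_p}B_{p+1}$, you get $d_0(\tilde T^m,\tilde T_n^m)\leq m\epsilon_n$. For each fixed $n$, $\|h_n\|_{C^0}<\infty$ by compactness. Hence, for all $m$ and $z$,
\[
\left|p_2\tilde T^m z-p_2 z-m\tilde\alpha_n\right|\leq m\epsilon_n+2\|h_n\|_{C^0}.
\]
Divide by $m$ and let $m\to\infty$. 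This puts the rotation set within $\epsilon_n$ of $\tilde\alpha_n$, hence within $2\epsilon_n$ of $\tilde\alpha$; then let $n\to\infty$. The special times $m_n$ and the subadditivity argument become unnecessary, though they are harmless.

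This is exactly the paper's proof, with one difference. Instead of your coboundary identity, the paper uses the Lipschitz bound $|\tilde T_m^n(x)-\tilde T_m^n(y)|\leq\|B_m\|_1^2|x-y|$. It compares an arbitrary point $x$ with a boundary point $y$, whose displacement is exactly $n\tilde\alpha$. The two devices are interchangeable, and both only need finiteness for a fixed approximant, not uniformity in $n$.
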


To show these results, we suitably modify one of Anosov and Katok's constructions. In \cite{anosovkatok70}, they constructed ergodic translations on the torus $\varmathbb{T}^h$, $h \geq 2$, of coordinates $(\beta_1,...,\beta_h)$, translations that admit non-standard smooth realizations on $[0,1]^{d-1} \times \varmathbb{T}^1$, $d \geq 2$, such that $T_{|\ddd M}$ is a rotation of angle $\alpha$. Moreover, in his construction, $\alpha \not\eq \beta_i$, $i=1,...,h$. In the previous chapter, we show that one $\beta_i$ can be an arbitrarily chosen Liouville number. However, this construction does not apply directly to the one-dimensional case. This is why, to obtain our result, though we essentially follow the previous chapter, we still need some substantial modifications.

\subsection{Definitions}

%An irrational number $\beta$ is \textit{Liouville} if, for any $k>0$, there is a sequence of integers $q_n \rightarrow +\infty$ such that $q_n^k \inf_{p \in \varmathbb{Z}}|q_n \beta - p| \rightarrow 0$. Liouville numbers are the complementary of Diophantine numbers in the set of irrational numbers. Let $\varmathbb{T}^h= \varmathbb{R}^h/\varmathbb{Z}^h$ denote the $h$-dimensional torus. Let $\mu_h$ be the Haar measure on $\varmathbb{T}^h$. 

%Let $d \geq 2$, $I= [0,1]^{d-1}$ and $M= I\times \varmathbb{T}^1$.
Let Diff$^\infty(M,\mu)$ be the class of smooth diffeomorphisms of $M$ preserving the Lebesgue measure $\mu$.  For $B \in$ Diff$^\infty(M,\mu)$ and $j \in \varmathbb{N}^*$, let $D^jB$ be the $j^{th}$ derivative of $B$ if $j >0$, and the $-j^{th}$ derivative of $B^{-1}$ if $j<0$. For $x \in M$, let $|D^jB(x)|$ be the norm of $D^jB(x)$ at $x$. We denote $\|B\|_k= \max_{0< |j| \leq k} \max_{ x \in M} |D^jB(x)|$.

A \textit{finite measurable partition} $\bar{\xi}$ of a measured manifold $(N, \nu)$ is the equivalence class of a finite set $\xi$ of disjoint measurable subsets of $N$ whose union is $N$, modulo sets of $\nu$-measure zero. In most of this paper, we do not distinguish a partition $\xi$ with its equivalent class $\bar{\xi}$ modulo sets of $\nu$-measure zero. In these cases, both are denoted $\xi$. Moreover, all partitions considered in this paper are representatives of a finite measurable partition. %The distance between two finite measurable partitions $\xi$ and $\xi'$ is defined by:

%\[ d(\xi,\xi')= \inf \sum_{c \in \xi, c' \in \xi'} \nu( c \Delta c') \] 

% (with respect to the measure $\mu$ on $M$).

%A \textit{measurable partition} of $M$ is a finite set $\xi$ of disjoints measurable (with respect to the measure $\mu$ on $M$) elements of $M$ whose union is $M$. All the partitions considered in this paper are measurable.% (with respect to the measure $\mu$ on $M$).

A partition $\xi'$ is \textit{subordinate} to a partition $\xi$ if any element of $\xi$ is a union of elements of $\xi'$, modulo sets of $\nu$-measure zero. In this case, if $\mathcal{B}(\xi)$ denotes the completed algebra generated by $\xi$, then $\mathcal{B}(\xi) \subset \mathcal{B}(\xi')$. The inclusion map $i: \mathcal{B}(\xi) \rightarrow \mathcal{B}(\xi')$ will be denoted $\xi \hookrightarrow \xi'$. This notation also means that $\xi'$ is \textit{subordinate} to $\xi$. A sequence of partitions $\xi_n$ is \textit{monotonic} if for any $n$, $ \xi_n \hookrightarrow \xi_{n+1}$. These definitions and properties are independent of the choice of the representatives $\xi$ and $\xi'$ of the equivalence classes $\bar{\xi}$ and $\bar{\xi'}$. 

A measure-preserving bijective bimeasurable map $T: (M_1,\mu_1, \mathcal{B}_1) \rightarrow (M_2,\mu_2, \mathcal{B}_2)$ induces an \textit{isomorphism} of measure algebras, still denoted $T: (\mu_1, \mathcal{B}_1) \rightarrow (\mu_2, \mathcal{B}_2)$. If $\xi_1,\xi_2$ are partitions, and if $\mathcal{B}_1=\mathcal{B}(\xi_1)$ and $\mathcal{B}_2=\mathcal{B}(\xi_2)$, we denote $T: \xi_1 \rightarrow \xi_2$ this induced isomorphism of measure algebras. If $M_1=M_2$, $\mu_1=\mu_2$ and $\mathcal{B}_1= \mathcal{B}_2$, then $T$ is a \textit{measure-preserving transformation}. Its induced isomorphism is an \textit{automorphism} (see \cite[p.43]{halmos56} and \cite{weiss72}).

A \textit{metric isomorphism} $L$ of measure-preserving transformations $T_1:(M_1,\mu_1, \mathcal{B}_1) \rightarrow (M_1,\mu_1, \mathcal{B}_1) $, $T_2:(M_2,\mu_2, \mathcal{B}_2) \rightarrow (M_2,\mu_2, \mathcal{B}_2)$ is a measure-preserving bijective bimeasurable map $L: (M_1,\mu_1, \mathcal{B}_1) \rightarrow (M_2,\mu_2, \mathcal{B}_2)$ such that $L T_1= T_2 L$ a.e. For convenience, when the measure is the Lebesgue measure and the algebra is the Borelian algebra, we omit to mention the measures and algebras, and we simply say that $L:(M_1,T_1) \rightarrow (M_2,T_2)$ is a metric isomorphism.

Let $\bar{\xi}$ be a measurable partition and $\xi$ a representative of this equivalent class modulo sets of $\mu$-measure zero. For $x \in M$, we denote $\xi(x)$ the element of the partition $\xi$ such that $x \in \xi(x)$. A sequences of partitions $\xi_n$ of measurable sets \textit{generates} if there is a set of full measure $F$ such that for any $x \in F$,

\[  \{ x \} = F \bigcap_{n \geq 1} \xi_n(x) \]

This property of generation is independent of the choice of the representatives $\xi_n$ of the equivalent class $\bar{\xi_n}$ and therefore, we will say that the sequence of measurable partitions $\bar{\xi_n}$ generates.
Let $M/\xi$ denote the equivalent class of the algebra generated by $\xi$, modulo sets of $\mu$-measure zero. $M/\xi$ is independent of the choice of the representative $\xi$ of the equivalent class $\bar{\xi}$. If $T: M_1 \rightarrow M_2$ is a measure-preserving map such that $T(\xi_1)=\xi_2$ $\mu$-almost everywhere, we can define a quotient map: $T/\xi_1: M/\xi_1 \rightarrow M/\xi_2$.

An \textit{effective action} of a group $G$ on $M$ is an action such that there is a set of full measure $F \subset M$ such that for any $x \in F$, there is $g \in G$ such that $gx \not\eq x$. A smooth effective circle action $(S_t)_{t \in \varmathbb{T}^1}$ on $M$ can be seen as a $1$-periodic smooth flow $(S_t)_{t \geq 0}$, we denote $\mathcal{A}_\alpha= \{ B^{-1} S_\alpha B, B \in \mbox{Diff}^\infty(M,\mu)  \}$. When $M=[0,1]^{d-1} \times \varmathbb{T}^1$, we consider the periodic flow $S_t$ defined by: 

\begin{eqnarray*}
S_t \colon [0,1]^{d-1} \times \varmathbb{T}^1  &\to [0,1]^{d-1} \times \varmathbb{T}^1  \\
(x,s) &\mapsto (x, t+s \mod 1)%, k=0,...,q_n-1
\end{eqnarray*}
%where $\bar{t}$ is the class of $t \in \varmathbb{R}$ modulo 1. 
For $a,b \in \varmathbb{T}^1$, let $[a,b[$ be the positively oriented circular sector between $a$ and $b$, with $a$ included and $b$ excluded.

%The \textit{diameter} $diam (\Gamma)$ of a domain $\Gamma \subset M$ is defined by: $diam(\Gamma)= \max_{x,y \in \Gamma} d(x,y)$, where $d(x,y)$ is the distance between $x$ and $y$.

%Let $h \geq 2$ and $\gamma=( \gamma_1,...,\gamma_h) \in \varmathbb{Z}^h$, with $\gamma_i, i=1,...,h$ relatively prime. Let $\{ T^{t \gamma} \}_{t \geq 0}$ a periodic flow on $\varmathbb{T}^h$. This flow has a fundamental domain $\Gamma \subset \varmathbb{T}^{h-1} \times \{ 0 \}$. The boundary of $\Gamma$ is of dimension $h-2$.

%For example, when $h=2$, there is a fundamental domain of the flow $\{ T^{t \gamma} \}_{t \geq 0}$ that is a segment line of length $1/\gamma_2$ (this can be seen using the Bezout identity for $(\gamma_1,\gamma_2)$). Remark that the $0$-volume of its boundary is equal to $2$ (the $0$-volume of a set of points is its cardinal).

A sequence $T_n$ of $\mu$-preserving maps \textit{weakly converges} to $T$ if, for any measurable set $E$, $\mu(T_n E \Delta E) \rightarrow 0$, where $A \Delta B= (A - B) \cup (B - A)$.

For $\g \in \varmathbb{R}$, we denote: $|\g|_{\mod 1} = \min_{k \in \varmathbb{Z}} | k+ \g |$

%For $a,b \in \varmathbb{T}^1$, we denote $[a,b[$ the positively oriented circular sector between $a$ and $b$, with $a$ included and $b$ excluded.

For $t \in \varmathbb{T}^1$ or $\varmathbb{R}$, and $A \subset I \times  \varmathbb{T}^1$, we denote \[ t+A = \left\{ ( x, t+s \mod 1), (x,s) \in A  \right\} \] 

Suppose $M=[0,1] \times \varmathbb{T}^1$ is the closed annulus. Let $\tilde{M}=[0,1] \times \varmathbb{R}$ be the universal covering of $M$. Let $T$ be a homeomorphism of $M$ isotopic to the identity and $\tilde{T}$ its lift to $\tilde{M}$. The \textit{rotation set} $\mbox{rot}( \tilde{T})$ of $\tilde{T}$ is defined by:
% and $p_2: [0,1] \times \varmathbb{R} \rightarrow \varmathbb{R}$ the second coordinate projection

\[  \mbox{rot}( \tilde{T}) = \bigcap_{k \geq 0} \bigcup_{n \geq k} \left\{ \frac{\tilde{T}^n(\tilde{x})- \tilde{x}}{n} / \tilde{x} \in \tilde{M} \right\} \]

We let the rotation set of $T$, $\mbox{rot}(T)$, be the equivalent class modulo 1 of $\mbox{rot}( \tilde{T})$. If $\mbox{rot}(T)$ is a singleton, and if $T$ is isotopic to the identity, then $T$ is a \textit{pseudo-rotation}. Note that, if $T_{| \ddd M }=S_{\alpha| \ddd M}$, then $T$ is isotopic to the identity. Indeed, 

$t \in [0,1] \mapsto S_{t\alpha}$ is a continuous path between the identity map and $S_\alpha$, and by Alexander's trick, any homeomorphism equal to the identity on the boundary is isotopic to the identity. In this paper, all the diffeomorphisms that we construct are equal to a rotation on the boundary and therefore, they are all isotopic to the identity.

\subsection{Basic steps of the proof}

The metric isomorphism of theorem \ref{theoremdebaserotrotrotisom} is obtained as the limit of isomorphisms of finite algebras: indeed, we use the lemma \cite[p.18]{anosovkatok70}:

\begin{lemma}
\label{lemmekatokrotisom}
Let $M_1$ and $M_2$ be Lebesgue spaces and let $\xi_n^{(i)}$ ($i=1,2$) be monotonic and generating sequences of finite measurable partitions of $M_i$. Let $T_n^{(i)}$ be automorphisms of $M_i$ such that $T_n^{(i)} \xi_n^{(i)}= \xi_n^{(i)}$ and $T_n^{(i)} \rightarrow T^{(i)}$ in the weak topology. Suppose there are metric isomorphisms $L_n: M_1 / \xi_n^{(1)} \rightarrow M_2 / \xi_n^{(2)} $ such that 

\[ L_n T_n^{(1)} / \xi_n^{(1)} =  T_n^{(2)} / \xi_n^{(2)} L_n \] 
and

\[ L_{n+1} \xi_n^{(1)} = \xi_n^{(2)} \]

then $(M_1,T_1)$ and $(M_2, T_2)$ are metrically isomorphic.

\bigskip

Said otherwise, if we have generating sequences of partitions and sequences of automorphisms $T_n^{(i)}$ weakly converging towards $T^{(i)}$, and if, for any integer $n$, the following diagram commutes:

\[ \xymatrix{
\xi_n^{(1)} \ar@(ul,dl)[]_{T_n^{(1)}}  \ar@{->}[r]^{L_n} \ar@{^{(}->}[d]  & \xi_n^{(2)} \ar@(ur,dr)[]^{T_n^{(2)}} \ar@{^{(}->}[d]  \\
\xi_{n+1}^{(1)}  \ar@{->}[r]^{L_{n+1}} & \xi_{n+1}^{(2)} 
 } \]

then $(M_1,T_1)$ and $(M_2, T_2)$ are metrically isomorphic.

\end{lemma}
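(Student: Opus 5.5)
The plan is to build the isomorphism $L$ from the finite isomorphisms $L_n$ in two stages: first at the level of measure algebras, then at the level of points via a standard Lebesgue-space argument. By monotonicity, $\mathcal{B}(\xi_n^{(i)}) \subset \mathcal{B}(\xi_{n+1}^{(i)})$. The hypothesis $L_{n+1}\xi_n^{(1)}=\xi_n^{(2)}$ says that $L_{n+1}$ maps the subalgebra $\mathcal{B}(\xi_n^{(1)})$ onto $\mathcal{B}(\xi_n^{(2)})$. I would first upgrade this to the statement that $L_{n+1}$ restricted to $\mathcal{B}(\xi_n^{(1)})$ equals $L_n$; this compatibility is what the commutative diagram expresses, and I would take it as the intended meaning of the hypothesis. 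The maps $L_n$ then define a single measure-preserving Boolean isomorphism
\[ L_\infty: \bigcup_n \mathcal{B}(\xi_n^{(1)}) \rightarrow \bigcup_n \mathcal{B}(\xi_n^{(2)}). \]
Because the partitions generate, each union is dense in the full measure algebra of $M_i$ for the metric $d(A,B)=\mu_i(A\Delta B)$. Since $L_\infty$ is an isometry for this metric, it extends uniquely to an isomorphism $L$ of the complete measure algebras. Von Neumann's theorem, that isomorphisms of measure algebras of Lebesgue spaces are induced by invertible point maps mod $0$, then realizes $L$ as a bimeasurable measure-preserving bijection $M_1\to M_2$.

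Next I would check that $L T^{(1)} = T^{(2)} L$ on the measure algebra, which suffices by von Neumann again. Fix $A\in \mathcal{B}(\xi_m^{(1)})$. For every $n\ge m$ we have $A\in\mathcal{B}(\xi_n^{(1)})$, and $T_n^{(1)}$ preserves $\xi_n^{(1)}$. The conjugacy relation at level $n$ therefore gives
\[ L(T_n^{(1)}A)=L_n(T_n^{(1)}A)=T_n^{(2)}L_n(A)=T_n^{(2)}L(A). \]
Letting $n\to\infty$, weak convergence gives $T_n^{(1)}A\to T^{(1)}A$ and $T_n^{(2)}L(A)\to T^{(2)}L(A)$ in the metric $\mu(\cdot\,\Delta\,\cdot)$; here weak convergence must be read as convergence of images, $\mu(T_nE\,\Delta\,TE)\to 0$. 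Since $L$ is an isometry, $L(T^{(1)}A)=T^{(2)}L(A)$. Both sides are continuous in $A$, so the identity extends from the dense union to the whole algebra. This gives $LT^{(1)}=T^{(2)}L$ a.e.

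The main obstacle is the first step: compatibility of the $L_n$. The literal hypothesis $L_{n+1}\xi_n^{(1)}=\xi_n^{(2)}$ only says that $L_{n+1}$ maps $\xi_n^{(1)}$ onto $\xi_n^{(2)}$ as a partition. It does not say element by element that it agrees with $L_n$. If it does not, I would fix this by conjugating. Set $\Phi_n = L_n^{-1}\circ L_{n+1}|_{\mathcal{B}(\xi_n^{(1)})}$, an automorphism of the finite algebra $\mathcal{B}(\xi_n^{(1)})$, and replace $L_{n+1}$ by the corrected map $L_{n+1}\circ\tilde\Phi_n^{-1}$, where $\tilde\Phi_n$ is an extension of $\Phi_n$ to $\mathcal{B}(\xi_{n+1}^{(1)})$. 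However, such a correction can spoil the conjugacy relation at level $n+1$. For this reason I expect the intended reading to be the commutative diagram, where compatibility is built into the hypothesis, and I would state explicitly that the proof uses $L_{n+1}|_{\mathcal{B}(\xi_n^{(1)})}=L_n$. In the constructions later in the paper this compatibility holds by design.
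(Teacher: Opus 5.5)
Your argument is correct. It is the standard proof of this lemma, which the paper does not reprove but quotes from Anosov--Katok \cite[p.18]{anosovkatok70}: the compatible $L_n$ give an isometry of the dense subalgebra $\bigcup_n\mathcal{B}(\xi_n^{(1)})$, it extends to the full measure algebras by generation, the conjugacy passes to the limit by weak convergence, and von Neumann's theorem realizes it by a point map. Both of your readings of the hypotheses are the right ones. The compatibility $L_{n+1}|_{\mathcal{B}(\xi_n^{(1)})}=L_n$ is what the diagram requires, and it is what the paper actually verifies in the proof of Corollary \ref{corolisomrotisom} via $\bar K^\infty_{n+1|\zeta_n}=\bar K^\infty_n$. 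Weak convergence must mean $\mu(T_nE\,\Delta\,TE)\to 0$, not the paper's misprinted $\mu(T_nE\,\Delta\,E)\to 0$.
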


%\ar@(ul,dl)[]_{T_{n+1}^{(1)}} \ar@(ur,dr)[]^{T_{n+1}^{(2)}} 

The proof of theorem \ref{theoremdebaserotrotrotisom} is in two steps. In the first step (lemma \ref{lemmefondarotisom}), we determine sufficient conditions on a sequence $(R_{\frac{p_n}{q_n} b_n} )_{n \geq 0}$ of periodic rotations of $\varmathbb{T}^1$ such that there exists sequences of finite partitions and automorphisms satisfying the assumptions of lemma \ref{lemmekatokrotisom} with $M_1= \varmathbb{T}^1$, $M_2=M$, $T_n^{(1)}=R_{\frac{p_n}{q_n} b_n}$, $T_n^{(2)}=T_n$, where $T_n$ is also smooth diffeomorphism, and such that the limit $T$ in the smooth topology of the sequence $T_n$ is smooth, and $T \in \bar{\mathcal{A}_\alpha}$ for $\alpha= \lim p_n/q_n$. 

In the second step (lemma \ref{lemme1point3rotisom}), we construct sequences of integers satisfying the conditions of the first step, such that $p_n/q_n \rightarrow \alpha$, $b_n p_n/q_n \rightarrow \beta$, with $(\alpha,\beta)$ that can be chosen arbitrarily close to any $(u,v) \in \varmathbb{T}^1 \times \varmathbb{T}^1$ , and with $(\alpha,\beta)$ either rationally dependent or rationally independent.

% show that the translation of vector $(\beta_1,...,\beta_{h-1},\beta)$ can be obtained as the limit of such sequences of translations $(T^{\frac{p_n}{q_n}\gn} )_{n \geq 0}$.

% M there is  be a Liouville number, $h\geq 2$ a positive integer. Let $M$ be a smooth compact connected manifold of dimension $d$, on which there exists an effective smooth circle action $S_t$ preserving a positive smooth measure $\mu$. There exists a dense set $E(\beta,d) \subset \varmathbb{T}^{h-1}$ such that for any $(\beta_1,..,\beta_{h-1}) \in E(\beta,d)$, there is   the ergodic translation of vector $(\beta_1,...,\beta_{h-1},\beta )$.

%\begin{remark}
%The proof of theorem \ref{theoremdebaserotrotrotisom} also gives that for any $n \in \varmathbb{N}^*$, any $u \in \varmathbb{T}$, any $\epsilon >0$, there is $\alpha \in \varmathbb{T}$ in a $\epsilon$-neighborhood of $u$, $T \in$ Diff$^\infty(M,\mu)$, such that $T_{|\ddd M}=S_{\alpha |\ddd M}$ and such that the rotation $R_{n\alpha}$ on $\varmathbb{T}$ is metrically isomorphic to $T$.
%\end{remark}

%The theorem follows from lemmas \ref{lemmefondarotisom} and \ref{lemme1point3rotisom}:

%The proof of the theorem is in two steps. satisfy the assumptions of lemma \ref{lemmefondarotisom}. We show that can be obtained as the limit of the sequences of satisfying  $\rr(n) \label{r1cste}$,

\begin{lemma} 
\label{lemmefondarotisom}
There exists an explicit sequence of integers $\rr(n)\geq n \label{r0csterotisom}$, such that, if there exist increasing sequences of integers $p_n,q_n, a_n,b_n \in \varmathbb{N}^*$, and a sequence $s_n  \in \varmathbb{Z}^*$ such that, for any integer $n$,

\begin{enumerate}
\item \textit{(primality)} \label{61rotisom} $a_nb_n-s_nq_n=1$.
\item \textit{(monotonicity)} \label{63rotisom}$q_n$ divides $q_{n+1}$ and $q_n<q_{n+1}$.
%\item \textit{(horizontal monotonicity)} \label{64} $\gamma_h^{(n)}$ divides $\gamma_h^{(n+1)}$
\item \textit{(isomorphism)} \label{65rotisom} $q_n$ divides $a_{n+1}-a_n$.
\item \textit{(convergence of the diffeomorphism, generation)} \label{66rotisom} \[ 0< \left| \frac{p_{n+1}}{q_{n+1}} - \frac{p_n}{q_n} \right| \leq \frac{1}{(b_{n+1} q_n)^{R_{\ref{r0csterotisom}}(n)}}  \]
%\item \textit{(horizontal convergence of the partition)} \label{67} Let $\G^{(n)} \subset \varmathbb{T}^{h-1} \times \{ 0 \}$ be a fundamental domain of the flow $\{T^{t\gn} \}$,  $d_{n}$ be the diameter of $\G^{(n)}$, and $\sigma_{n}$ the $(h-2)$-dimensional volume of the boundary of $\G^{(n)}$. Then

%\[ d_{n+1} \leq \frac{1}{2^n \gamma_h^{(n)}  \sigma_n  }  \]

%\item \textit{(vertical convergence of the partition)} \label{68} \[  \sum_{n \geq 0}  \frac{ (\gamma_h^{(n)})^2  \sigma_n  }{q_n}   \left| \frac{\gnu}{ \gamma_h^{(n+1)} } - \frac{\gn}{\gamma_h^{(n)}} \right| < + \infty    \]
\end{enumerate}

then all these assumptions imply that there are $\alpha,\beta \in \varmathbb{T}^1$ such that \[ \frac{p_n}{q_n} \rightarrow^{mod 1} \alpha, \;  \frac{p_n}{q_n} b_n \rightarrow^{mod 1} \beta \]

and there is a smooth ergodic measure-preserving diffeomorphism $T$ of $M$ such that for any $j \in \varmathbb{N}$, $(D^jT)_{|\ddd M}=(D^jS_{\alpha})_{ |\ddd M}$ and such that $(\varmathbb{T}^1, R_\beta, Leb)$ is metrically isomorphic to $(M,T,\mu)$.

\end{lemma}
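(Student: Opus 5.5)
We follow the Anosov--Katok scheme of approximation by successive conjugations, with $M=[0,1]^{d-1}\times\varmathbb{T}^1$; the statement is then transported to a general $M$ in the usual way. We build diffeomorphisms $T_n=B_n^{-1}S_{p_n/q_n}B_n$ with $B_n=h_1\circ\cdots\circ h_n$. Each $h_{n+1}\in$ Diff$^\infty(M,\mu)$ commutes with $S_{p_n/q_n}$, and each $h_{n+1}$ equals the identity in a neighborhood of $\ddd M$ (to all orders). Then $T_{n+1}-T_n$ is controlled by $\|B_{n+1}\|_{k}^{C(k)}\,|p_{n+1}/q_{n+1}-p_n/q_n|$. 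The construction of $h_{n+1}$ will be explicit, with $\|h_{n+1}\|_k\le (b_{n+1}q_n)^{C_k}$, because $h_{n+1}$ only depends on the combinatorics of $q_n$ and $b_{n+1}$. Hence $\|B_{n+1}\|_n\le (b_{n+1}q_n)^{R(n)/2}$ for an explicit $R(n)$, and assumption \ref{66rotisom} makes $\sum_n d_{C^n}(T_{n+1},T_n)$ converge. The limit $T$ is therefore smooth and lies in $\bar{\mathcal{A}_\alpha}$. Since each $T_n=S_{p_n/q_n}$ near $\ddd M$ with all derivatives, we get $(D^jT)_{|\ddd M}=(D^jS_\alpha)_{|\ddd M}$. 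The convergence of $p_n/q_n\to\alpha$ also follows from \ref{66rotisom}. The convergence of $b_np_n/q_n$ uses \ref{61rotisom} and \ref{65rotisom}: modulo $1/q_n$ one has $b_{n+1}\equiv b_n$ (because $a_{n+1}\equiv a_n \pmod{q_n}$ and $b$ is the inverse of $a$ mod $q_n$), and $b_{n+1}|p_{n+1}/q_{n+1}-p_n/q_n|$ is summable.

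For the isomorphism we apply Lemma~\ref{lemmekatokrotisom} with $M_1=\varmathbb{T}^1$ and $T_n^{(1)}=R_{b_np_n/q_n}$. Take $\xi_n^{(1)}$ to be the partition of the circle into $q_n$ intervals $[i/q_n,(i+1)/q_n[$, which is clearly monotone by \ref{63rotisom} and generating. Take $\xi_n^{(2)}=B_n^{-1}\eta_n$, where $\eta_n$ is the partition of $M$ into fundamental-domain sets $\{s\in[i/q_n,(i+1)/q_n[\}$ that are further cut in the $x$-direction. The role of the conjugacy $h_{n+1}$ is to arrange the following: the $S_{p_{n+1}/q_{n+1}}$-invariant partition $\eta_{n+1}$, pulled back by $h_{n+1}$, refines $\eta_n$, and its atoms become small in all directions. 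Hence the partitions $\xi_n^{(2)}$ are monotone and generating once the derivatives of $B_n$ are controlled, which is again \ref{66rotisom}. On the quotients, $S_{p_n/q_n}$ permutes the $q_n$ atoms cyclically by $i\mapsto i+p_n$, while $R_{b_np_n/q_n}$ acts by $i\mapsto i+b_np_n$. Define $L_n$ by sending atom $i$ of $\xi_n^{(2)}$ to atom $b_n i$ of $\xi^{(1)}_n$. This is a bijection by \ref{61rotisom}, since $b_n$ is invertible mod $q_n$ with inverse $a_n$, and it intertwines the two cyclic actions. Compatibility $L_{n+1}\xi_n^{(2)}=\xi_n^{(1)}$ reduces to $b_{n+1}\equiv b_n\pmod{q_n}$ together with the divisibility $q_n\mid q_{n+1}$; this is exactly what \ref{65rotisom} combined with \ref{61rotisom} provides. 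Ergodicity of $T$ then follows from the isomorphism with $R_\beta$ once $\beta$ is irrational, which holds because the approximations are strictly non-stationary.

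The main obstacle is the construction of $h_{n+1}$. On $M$, the atoms of $\eta_{n+1}$ must be mapped so that they are grouped into the atoms of $\eta_n$ according to the ``multiplication by $b_{n+1}$'' combinatorics rather than the natural order. At the same time $h_{n+1}$ must commute with $S_{1/q_n}$, preserve $\mu$, be the identity near $\ddd M$, and have polynomially bounded norms. I would first build $h_{n+1}$ on a single fundamental domain $[0,1]^{d-1}\times[0,1/q_n[$ as a smooth measure-preserving map that approximates a permutation of small boxes; the extra $[0,1]^{d-1}$ dimension, available since $d\ge2$, provides room for this rearrangement. I would then extend it equivariantly. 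The permutation only needs to be realized up to a set of measure $\varepsilon_n$ with $\sum\varepsilon_n<\infty$, and Borel--Cantelli then keeps Lemma~\ref{lemmekatokrotisom} applicable. Choosing $R(n)$ to dominate the resulting norm exponents completes the plan.
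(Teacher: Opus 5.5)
Your combinatorial skeleton matches the paper's: $L_n$ is multiplication by $b_n=a_n^{-1}$ mod $q_n$ (the inverse of the paper's $K_n$), compatibility comes from $b_{n+1}\equiv b_n\pmod{q_n}$, and convergence of $T_n$ comes from assumption \ref{66rotisom}. However, the step that is supposed to make Lemma \ref{lemmekatokrotisom} applicable fails as stated.

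\textbf{First gap: monotonicity.} For $h_{n+1}^{-1}\eta_{n+1}$ to refine $\eta_n$ with the multiplication-by-$b$ combinatorics, $h_{n+1}$ must map $I\times[0,1/q_n[$ exactly onto $R^{(n)}=I\times\bigcup_{i<q_{n+1}/q_n}[a_{n+1}i/q_{n+1},(a_{n+1}i+1)/q_{n+1}[$. By Lemma \ref{estdern}, for large $n$ this set is a union of $b_{n+1}$ strips separated by gaps of positive measure. A diffeomorphism sends the connected open strip $]0,1[\times]0,1/q_n[$ to a connected open set, which cannot coincide mod $0$ with $R^{(n)}$ once $b_{n+1}\geq 2$. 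So $B_n^{-1}\eta_n$ is never monotone. Lemma \ref{lemmekatokrotisom} needs exactly monotone partitions and exact identities $L_{n+1}\xi_n^{(2)}=\xi_n^{(1)}$, and an appeal to Borel--Cantelli does not supply them. The missing idea is Lemma \ref{conditionsbnrotisom}, which works as follows:
\begin{itemize}
\item compose the exact, discontinuous, $S_{1/q_n}$-equivariant rearrangements $C_n^{n+1}$ into partitions $\eta_n^m$, and set $\xi_n^m=B_m^{-1}\eta_n^m$;
\item prove these converge as $m\to\infty$ under $\sum_m q_m\,\mu\bigl((I\times[0,1/q_m[)\,\Delta\,A_{m+1}^{-1}R^{(m)}\bigr)<\infty$;
\item the limits $\xi_n^\infty$ are then exactly monotone and $T_n$-invariant, because $T_n=B_m^{-1}S_{p_n/q_n}B_m$ for all $m\geq n$;
\item generation transfers from $\xi_n=B_n^{-1}\eta_n$ to $\xi_n^\infty$.
\end{itemize}
Otherwise you would have to state and prove an approximate version of the Anosov--Katok lemma.

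\textbf{Second gap: the norm bound.} The estimate $\|h_{n+1}\|_k\leq(b_{n+1}q_n)^{C_k}$ is the crux, and you assert it rather than derive it. Since $q_n\mid q_{n+1}$ and $p_{n+1}/q_{n+1}\neq p_n/q_n$, we have $|p_{n+1}/q_{n+1}-p_n/q_n|\geq 1/q_{n+1}$. Any bound polynomial in $q_{n+1}$ is therefore worthless. That is exactly what you pay if you naively approximate a permutation of the $q_{n+1}/q_n$ boxes of width $1/q_{n+1}$.

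What makes the cost depend only on $b_{n+1}q_n$ is Lemma \ref{estdern}. Because $a_{n+1}b_{n+1}\equiv1\pmod{q_{n+1}}$, these boxes stack into $b_{n+1}$ intervals of width about $1/(b_{n+1}q_n)$, at positions $k_n(l)/q_n+r_n(l)/q_{n+1}$. So $A_{n+1}$ can be built in two moves: cut $I\times[0,1/q_n[$ into $b_{n+1}$ vertical slices and translate them by multiples of $1/q_n$, then quasi-rotate by $\pi/2$ at scale $q_n$.

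Generation needs a separate step too, since atoms of $\eta_{n+1}$ have width $1/q_{n+1}$. The paper's $A_{n+1}^3$ quasi-rotates blocks of $w$ such atoms. It chooses $w$ so that $q_{n+1}/w$ is polynomial in $q_n$ and $\|B_n\|_1$, while pulled-back atoms have diameter at most $1/(2^n\|B_n\|_1)$. This relies on the size of $q_{n+1}$ forced by assumption \ref{66rotisom}; saying the extra dimension ``gives room'' does not address this trade-off.

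\textbf{Smaller points.}
\begin{itemize}
\item With $B_n=h_1\circ\cdots\circ h_n$ and $T_n=B_n^{-1}S_{p_n/q_n}B_n$, commutation of $h_{n+1}$ with $S_{p_n/q_n}$ does not give $T_n=B_{n+1}^{-1}S_{p_n/q_n}B_{n+1}$. You need $B_{n+1}=h_{n+1}B_n$, and equivariance under $S_{1/q_n}$, since $\gcd(p_n,q_n)$ need not be $1$.
\item Cutting $\eta_n$ in the $x$-direction is incompatible with $L_n$ being a bijection onto the $q_n$ arcs of $\zeta_n$.
\item For the irrationality of $\beta$, non-stationarity alone is not enough. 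If $\beta=P/Q$, any point with denominator dividing $q_n$ other than $\beta$ lies at distance at least $1/(Qq_n)$ from it. The tail bound from assumption \ref{66rotisom} then forces $b_np_n/q_n\equiv\beta$ for large $n$, which contradicts the nonzero increments.
\end{itemize}
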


\begin{lemma}
\label{lemme1point3rotisom}
For any $u,v \in \varmathbb{T}^1$, for any $\epsilon >0$, there exist $(\alpha,\beta) \in \varmathbb{T}^1 \times \varmathbb{T}^1$ in a $\epsilon$-neighborhood of $(u,v)$, such that there exist sequences of integers $p_n,q_n, a_n,b_n \in \varmathbb{N}^*$, $s_n  \in \varmathbb{Z}^*$ satisfying the assumptions of lemma \ref{lemmefondarotisom}, such that \[ \frac{p_n}{q_n} \rightarrow^{mod 1} \alpha, \;  \frac{p_n}{q_n} b_n \rightarrow^{mod 1} \beta \]

Moreover, $\beta$ can be chosen either rationally dependent of $\alpha$ or rationally independent of $\alpha$.

\end{lemma}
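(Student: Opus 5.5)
The plan is an inductive construction. At each step the arithmetic conditions (primality) and (isomorphism) will hold by construction, and (convergence) will be forced by taking the new denominator enormous. The only real freedom is a bounded increment of $b_n$ and the numerator of the correction to $p_n/q_n$; this freedom is used at the end to decide whether $\beta$ is rationally dependent on $\alpha$.

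\emph{Initial step.} Choose a large prime $q_0$ and $p_0$ with $|p_0/q_0-u|<\epsilon/4$. Since $p_0$ is invertible mod $q_0$, the residues $b p_0 \bmod q_0$ for $b\in\{1,\dots,q_0-1\}$ are all the nonzero residues. Hence there is $b_0$, coprime to $q_0$, with $|b_0p_0/q_0-v|_{\mod 1}<\epsilon/4$ once $q_0$ is large. Let $a_0\geq 2$ be an inverse of $b_0$ mod $q_0$, and put $s_0=(a_0b_0-1)/q_0$. Then $s_0\neq 0$ because $a_0b_0>1$.

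\emph{Inductive step.} Suppose $p_n,q_n,a_n,b_n,s_n$ are constructed with $\gcd(b_n,q_n)=1$. Set $b_{n+1}=b_n+m_nq_n$ with $m_n\geq 1$ small (say $m_n\in\{1,2\}$). Then $b_{n+1}\equiv b_n \pmod{q_n}$, so $\gcd(b_{n+1},q_n)=1$. Next choose a prime $c_{n+1}$ such that
\[ c_{n+1} > (b_{n+1}q_n)^{R_{\ref{r0csterotisom}}(n)} \quad\text{and}\quad c_{n+1}\nmid b_{n+1}. \]
Put $q_{n+1}=c_{n+1}q_n$ and $p_{n+1}=c_{n+1}p_n+r_n$, where $r_n$ is an integer with $1\le r_n\le$ a fixed bound. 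This gives
\[ 0<\frac{p_{n+1}}{q_{n+1}}-\frac{p_n}{q_n}=\frac{r_n}{q_{n+1}}\leq \frac{1}{(b_{n+1}q_n)^{R_{\ref{r0csterotisom}}(n)}}, \]
after absorbing $r_n$ into $c_{n+1}$. This is (convergence). Condition (monotonicity) is immediate.

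Now $\gcd(b_{n+1},q_{n+1})=1$, so let $a_{n+1}$ be an inverse of $b_{n+1}$ mod $q_{n+1}$, taken larger than $a_n$. Reducing $a_{n+1}b_{n+1}\equiv 1$ mod $q_n$ and using $b_{n+1}\equiv b_n$ gives $a_{n+1}\equiv b_n^{-1}\equiv a_n \pmod{q_n}$, which is (isomorphism). Finally $s_{n+1}=(a_{n+1}b_{n+1}-1)/q_{n+1}$ is a nonzero integer, which is (primality).

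\emph{Limits.} We have $\alpha=p_0/q_0+\sum_k r_k/q_{k+1}$. Moreover
\[ \beta_{n+1}-\beta_n\equiv b_{n+1}\Big(\frac{p_{n+1}}{q_{n+1}}-\frac{p_n}{q_n}\Big)+m_n p_n\equiv \frac{b_{n+1}r_n}{q_{n+1}} \pmod 1, \]
since $m_nq_n(p_{n+1}/q_{n+1}-p_n/q_n)$ combines with the integer $m_np_n$. Thus both series converge superexponentially, with tails bounded by $2/q_{n+1}$, so $\alpha$ and $\beta=\sum$ are within $\epsilon$ of $(u,v)$.

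\emph{Rational (in)dependence.} The remaining freedom is $(m_n,r_n)$ at each step.

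For independence, enumerate the nonzero triples $(k,l,j)\in\varmathbb{Z}^3$. At step $n$, choose $r_n\in\{1,2\}$ so that $|k\alpha_{n+1}+l\beta_{n+1}-j|$ is at least about $1/q_{n+1}$ for the $n$-th triple. This is possible because changing $r_n$ by one moves $k\alpha_{n+1}+l\beta_{n+1}$ by $(k+lb_{n+1})/q_{n+1}$, which is nonzero for large $n$. Every later change is at most $C(|k|+|l|)b_{n+2}/q_{n+2}$, which is much smaller than $1/q_{n+1}$ by the growth of $c_{n+2}$. Hence the relation never closes up in the limit.

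For dependence, the natural target is $\beta=b_0\alpha$, which holds exactly if $m_n=0$, i.e.\ $b_n\equiv b_0$. This is the main obstacle, since the lemma asks for increasing $b_n$. What I would try first is to pick $r_n$ to compensate: require
\[ \sum_k (b_{k+1}-b_0)\frac{r_k}{q_{k+1}} \]
to equal a prescribed rational (or $0$) modulo an appropriate combination. This would come from a telescoping choice in which $(b_{n+1}-b_0)r_n$ is absorbed into the next step's numerator, for example taking $r_{n+1}$ to cancel the defect $-(b_{n+1}-b_0)r_n c_{n+2}$ through a modified recursion $p_{n+2}=c_{n+2}p_{n+1}+r_{n+1}$ with $r_{n+1}$ allowed to be large but $o(c_{n+2})$. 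If this proves too delicate, I would fall back on reading ``increasing'' as nondecreasing for the $b_n$. In that case the constant choice $b_n=b_0$ gives $\beta=b_0\alpha$ immediately, with $b_0\neq\pm1$ ensuring $\beta\neq\pm\alpha$.
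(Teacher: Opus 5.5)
Your proposal is correct. In the dependent case your fallback ($b_n\equiv b_0$, hence $\beta=b_0\alpha$) is exactly the paper's choice. The paper sets $b_{n+1}=b_n$, and when building $A_{n+1}^3$ it says explicitly that $b_{n+1}$ ``can be a bounded sequence''. So ``increasing'' is meant in the weak sense for $b_n$, and the compensation scheme you sketch first is unnecessary (as written it is not yet an argument).

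Elsewhere you take a genuinely different route.

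\emph{Arithmetic.} The paper uses closed recursions that force $s_{n+1}=1$: $a_{n+1}=a_n+s_nc_nq_n$, $p_{n+1}=p_nq_{n+1}/q_n+1$, and $q_{n+1}=s_nq_n(1+c_nb_n)$, resp.\ $q_{n+1}=s_nq_n(1+c_nb_n+c_nq_n+a_n)$ with $b_{n+1}=b_n+q_n$. You instead multiply $q_n$ by a large prime not dividing $b_{n+1}$ and take $a_{n+1}$ to be an inverse of $b_{n+1}$ modulo $q_{n+1}$. Condition (isomorphism) then comes for free from $b_{n+1}\equiv b_n \pmod{q_n}$. Your version is more flexible. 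It also avoids a slip in the paper's independent-case formula, which actually gives $a_{n+1}b_{n+1}-q_{n+1}=1+a_nq_n(1-s_n)$ and so needs $s_0=1$ at the first step.

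\emph{Independence.} The paper proves that the translation by $(\alpha,\beta)$ on $\varmathbb{T}^2$ is ergodic via the Katok--Stepin criterion. It uses the cyclic partitions $\hat{\zeta}_n$ attached to $\gamma^{(n)}=(1,b_n)$, the divisibility $\gcd(p_{n+1},q_{n+1})\mid q_n$, $b_n\to\infty$, and the approximation speed of assumption 4. This gives independence for every sequence the recursion produces, with no tuning. Your diagonal argument is elementary and self-contained, but it engineers the conclusion step by step. Note that in your construction $\gcd(p_{n+1},c_{n+1})$ divides $r_n$, so $\gcd(p_{n+1},q_{n+1})\mid q_n$ holds as well. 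Hence the paper's argument applies to your sequences as soon as $m_n\geq 1$, and you could drop the tuning of $r_n$ altogether.

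If you keep the diagonal argument, make three points explicit.
\begin{enumerate}
\item Work with $|k\alpha_{n+1}+l\beta_{n+1}|_{\mod 1}$ and enumerate only pairs $(k,l)\neq(0,0)$. The real numbers $p_nb_n/q_n$ diverge, since they jump by the integers $m_np_n$. On the other hand, showing that an element of $\frac{1}{q_{n+1}}\varmathbb{Z}$ is not an integer handles every $j$ at once.
\item Your justification ``$k+lb_{n+1}\neq 0$ for large $n$'' does not work, because the pair treated at step $n$ changes with $n$. For instance, if it is $(k,l)=(-b_{n+1},1)$, both choices of $r_n$ give $k\alpha_{n+1}+l\beta_{n+1}=0$. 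Use the freedom in $m_n$ instead: changing $m_n$ by one shifts $k+lb_{n+1}$ by $lq_n\neq 0$.
\item Impose $c_{m+1}\geq 2^{m+3}b_{m+1}\max_{i\leq m}(|k_i|+|l_i|)$. This gives $|k_m+l_mb_{m+1}|<q_{m+1}$. It also makes the later corrections $\sum_{m\geq i+1}(k_i+l_ib_{m+1})r_m/q_{m+1}$ for the pair treated at step $i$ total less than $1/(2^{i+2}q_{i+1})<1/q_{i+1}$.
\end{enumerate}
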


%In the first part of the proof, we construct a monotonic and generating partition on the $h$-dimensional torus $\varmathbb{T^h}$, called $\zeta_n^\infty$, which is adapted to the translation $T^{\frac{p_n}{q_n}\gamma^{(n)}}$. To that end, we use assumptions $\ref{63rotisom}$, $\ref{64}$, $\ref{67}$ and $\ref{68}$.
%The metric isomorphism of lemma \ref{lemmefondarotisom} is obtained as the limit of finite isomorphisms of finite algebras: indeed, we have the lemma \cite[p.18]{anosovkatok70}: 

%Lemma \ref{lemmekatokrot} involves monotonic and generating partitions $\xi_n^1$ and $\xi_n^2$ on each space, automorphisms $T_n^{(1)}$, $T_n^{(2)}$ stabilizing each partition, and isomorphisms $L_n$ that send each partition $\xi_n^1$ on its counterpart $\xi_n^2$. 

We divide the proof of lemma \ref{lemmefondarotisom} in two main parts. In the first part of the proof, we elaborate sufficient conditions on $B_n \in$ Diff$^\infty(M,\mu)$, where $M=[0,1]^{d-1} \times \varmathbb{T}^1$, so that if $T_n= B_n^{-1} S_{\frac{p_n}{q_n}} B_n$ weakly converges towards an automorphism $T$, then there exists a metric isomorphism between $(\varmathbb{T}^1,R_\beta,Leb)$ and $(M,T,\mu)$. To that end, we apply lemma \ref{lemmekatokrotisom}: we construct a monotonous and generating sequence of partitions $\xi_n^\infty$ of $M$ and a sequence of isomorphisms $\bar{K}_n^\infty: \varmathbb{T}^1/\zeta_n \rightarrow M/ \xi_n^\infty$, where $\zeta_n=\{ [i/q_n, (i+1)/q_n[,i=0,...,q_n-1  \}$, such that $ \bar{K}_n^\infty R_{\frac{p_n}{q_n}} = T_n\bar{K}_n^\infty $ and $\bar{K}_{n+1|\zeta_n}^\infty=\bar{K}_n^\infty$. In the construction of this isomorphism, assumption $\ref{65rotisom}$ is important. Moreover, we will see that the elements of $\xi_n^\infty$ are not the most elementary, because they must be chosen in a way that ensures the monotonicity of the sequence $\bar{K}_n^\infty$. This condition of monotonicity induces combinatorial constraints on the elements of the partition $\xi_n^\infty$. Though it follows a similar scheme, the construction of the sequence $\bar{K}_n^\infty$ differs from the construction done in the previous chapter, especially because the assumption \ref{61rotisom} is new.

In the second part of the proof, we construct diffeomorphisms $T_n= B_n^{-1} S_{\frac{p_n}{q_n}}B_n $ on $M$ stabilizing $\xi_n^\infty$, obtained by successive conjugations from the rotation $S_{\frac{p_n}{q_n}}$. The conjugacy $B_n$ is constructed explicitly. In this second part, we essentially follow the previous chapter (which elaborated on \cite{windsor07}), except for the obtention of the generation of the sequence of partitions $(\xi_n^\infty)_{n \geq 1}$, for which we need to slightly modify the construction. 

Another change with respect to the previous chapter is in the construction of the limit angles $\alpha$ and $\beta$, 
i.e. in the proof of lemma \ref{lemme1point3rotisom}.

\subsection{Construction of the limit angles $\alpha$ and $\beta$: proof of lemma \ref{lemme1point3rotisom}}

\subsubsection{The case $\beta=p \alpha$}
We show:
\begin{lemma}

For any $u,v \in \varmathbb{T}^1$, for any $\epsilon >0$, there exist $(\alpha,\beta) \in \varmathbb{T}^1 \times \varmathbb{T}^1$ in a $\epsilon$-neighborhood of $(u,v)$, such that there exist sequences of integers $p_n,q_n, a_n,b_n \in \varmathbb{N}^*$, $s_n  \in \varmathbb{Z}^*$ satisfying the assumptions of lemma \ref{lemmefondarotisom}, such that \[ \frac{p_n}{q_n} \rightarrow^{mod 1} \alpha, \;  \frac{p_n}{q_n} b_n \rightarrow^{mod 1} \beta \]

Moreover, $\beta$ can be chosen rationally dependent of $\alpha$.

\end{lemma}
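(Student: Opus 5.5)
The plan is to take $\beta=b\alpha$ for a fixed integer $b$ and to build the sequences inductively. The key observation is that $\frac{p_n}{q_n}b_n \bmod 1$ depends only on $b_n \bmod q_n$, up to an error we control. So we keep $b_n$ congruent to a fixed integer $b$ modulo a previous denominator. We choose all $q_n$ with the same set of prime factors, none of which divides $b$.

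\textbf{Choice of $b$ and of the starting fraction.} Given $u,v,\epsilon$, fix an integer $b>2/\epsilon$. Fix a prime $c$ not dividing $b$. As $\alpha'$ ranges over the interval $]u-\epsilon/2,u+\epsilon/2[$, the quantity $b\alpha'$ covers the whole circle. So there is $\alpha_0$ with $|\alpha_0-u|<\epsilon/2$ and $b\alpha_0\equiv v$. Choose $q_0=c^{k_0}$ large and $p_0$ with $|p_0/q_0-\alpha_0|$ so small that $|p_0/q_0-u|<\epsilon/2$ and $|bp_0/q_0-v|_{\bmod 1}<\epsilon/2$. Put $b_0=b$. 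Since $\gcd(b,q_0)=1$, let $a_0$ be a positive inverse of $b$ modulo $q_0$, and set $s_0=(a_0b_0-1)/q_0$. We can make $s_0\neq 0$ by adding $q_0$ to $a_0$.

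\textbf{Inductive step.} Suppose $p_n,q_n,a_n,b_n,s_n$ are constructed, with $q_n$ a power of $c$ and $b_n\equiv b \pmod{q_{n-1}}$ (for $n=0$, simply $b_0=b$). The steps are as follows.
\begin{enumerate}
\item Choose $b_{n+1}=b+N_{n+1}q_n$ with $N_{n+1}$ large enough that $b_{n+1}>b_n$. Then $b_{n+1}$ is prime to $c$, hence to every power of $c$.
\item Only now choose $q_{n+1}=q_nc^{k}$ with $k$ so large that $q_{n+1}\ge (b_{n+1}q_n)^{R_{\ref{r0csterotisom}}(n)}$ and $q_{n+1}>2^nq_{n+1}^{1/2}$-type smallness margins hold. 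Set $p_{n+1}=p_nc^k+1$. Then $0<\frac{p_{n+1}}{q_{n+1}}-\frac{p_n}{q_n}=\frac1{q_{n+1}}$, so assumption \ref{66rotisom} holds, and assumption \ref{63rotisom} holds as well. Fixing $b_{n+1}$ before $q_{n+1}$ is exactly what breaks the apparent circularity in assumption \ref{66rotisom}.
\item Let $a_{n+1}$ be an inverse of $b_{n+1}$ modulo $q_{n+1}$. Since $b_{n+1}\equiv b_n \pmod{q_n}$, both $a_{n+1}$ and $a_n$ invert the same class modulo $q_n$, so $q_n\mid a_{n+1}-a_n$, which is assumption \ref{65rotisom}.
\item Adding multiples of $q_{n+1}$ to $a_{n+1}$ makes $a_{n+1}>a_n$ and keeps $s_{n+1}=(a_{n+1}b_{n+1}-1)/q_{n+1}$ a nonzero integer, which is assumption \ref{61rotisom}. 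The sequences $p_n$ and $q_n$ are increasing.
\end{enumerate}

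\textbf{Limits.} The sequence $p_n/q_n$ converges to some $\alpha$. Its total displacement from $p_0/q_0$ is $\sum 1/q_{n+1}$, which is as small as we like, so $\alpha$ and $b\alpha$ stay within $\epsilon$ of $u$ and $v$. For $\beta$, write $\delta_n=\frac{p_n}{q_n}-\frac{p_{n-1}}{q_{n-1}}$. Then
\[ b_n\frac{p_n}{q_n}=b\frac{p_n}{q_n}+N_nq_{n-1}\Bigl(\frac{p_{n-1}}{q_{n-1}}+\delta_n\Bigr)\equiv b\frac{p_n}{q_n}+N_nq_{n-1}\delta_n \pmod 1. \]
The error satisfies $|N_nq_{n-1}\delta_n|\le b_n (b_nq_{n-1})^{-R_{\ref{r0csterotisom}}(n-1)}\le b_n^{-1}\to0$, as soon as $R_{\ref{r0csterotisom}}\ge 2$; otherwise we enlarge $k$ to force it. Hence $b_n\frac{p_n}{q_n}\to b\alpha=:\beta \bmod 1$, so $\beta$ is rationally dependent on $\alpha$.

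\textbf{Main obstacle.} The delicate point is the interplay between assumption \ref{66rotisom}, whose bound involves $b_{n+1}$, and the requirements that $\gcd(b_{n+1},q_{n+1})=1$ and that $b_n\frac{p_n}{q_n}$ converge. Taking $b_{n+1}\equiv b$ modulo the \emph{previous} denominator $q_n$, and keeping all $q_n$ powers of the single prime $c\nmid b$, is what resolves all three simultaneously. One should double-check that the possible positivity requirement $\alpha\neq\pm\beta$ is not needed here; if it is, choosing $b\ge 2$ and $\alpha$ irrational suffices, since then $(b\mp1)\alpha\notin\varmathbb{Z}$.
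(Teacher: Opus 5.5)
\textbf{There is a genuine gap in step 3.} Your inductive hypothesis gives only $b_n\equiv b \pmod{q_{n-1}}$, while $b_{n+1}=b+N_{n+1}q_n\equiv b\pmod{q_n}$. So $b_{n+1}-b_n=N_{n+1}q_n-N_nq_{n-1}$ is divisible by $q_n$ only if $q_n/q_{n-1}=c^k$ divides $N_n$.

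This cannot be arranged. Since you need $b_n>b_{n-1}$, you have $N_n\ge 1$, so divisibility would force $N_n\ge q_n/q_{n-1}$, i.e.\ $b_n\ge q_n$. That is incompatible with choosing $q_n$ much larger than $b_n$, as the convergence assumption requires.

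Hence $a_{n+1}\equiv b^{-1}$ but $a_n\equiv b_n^{-1}\not\equiv b^{-1}\pmod{q_n}$, and the isomorphism condition $q_n\mid a_{n+1}-a_n$ fails. A concrete case: take $c=2$, $b=3$, $q_0=4$, $b_1=7$, and any $q_1=2^m$ with $m\ge 6$. Then $a_1\equiv 7^{-1}\equiv 55\pmod{64}$. But $b_2=3+N_2q_1$ forces $a_2\equiv 3^{-1}\equiv 43\pmod{64}$, so $q_1\nmid a_2-a_1$.

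This is not a technicality. The paper's partial proof of Lemma~\ref{lemmefondarotisom} shows that primality at ranks $n,n+1$ together with the isomorphism condition at rank $n$ force $q_n\mid b_{n+1}-b_n$. So every admissible sequence has $b_{n+1}=b_n+M_nq_n$. With that recursion your computation of $\beta$ changes: one gets
$\beta\equiv b_0\alpha+\sum_k M_kq_k(\alpha-p_k/q_k)\pmod 1$.
The correction terms do not vanish when $M_k\neq 0$. In fact the paper uses exactly $M_k=1$ to obtain rational independence.

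\textbf{The repair} is to drop the insistence on $b_{n+1}>b_n$ and keep $b_n=b$ constant. This is what the paper does: it sets $b_{n+1}=b_n$, and later remarks that $b_{n+1}$ may be a bounded sequence, so ``increasing'' is meant loosely.

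With $b_n\equiv b$ constant, your step 3 becomes correct, since $a_n$ and $a_{n+1}$ both invert $b$ modulo $q_n$. Also $b_np_n/q_n=bp_n/q_n\to b\alpha$ exactly, with no error term to control. Your choice $q_{n+1}=q_nc^k$ with modular inverses is then a valid alternative to the paper's explicit recursion $a_{n+1}=a_n+s_nc_nq_n$, $q_{n+1}=q_ns_n(1+c_nb_n)$, which yields $s_{n+1}=1$ directly.

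Your treatment of the initial data is a sound justification of a step the paper merely asserts: you take $b>2/\epsilon$ so that $b\alpha'$ sweeps the circle, and $q_0$ a power of a prime not dividing $b$.
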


\begin{proof}

Let $u,v \in \varmathbb{T}^1$ and $\epsilon >0$. Let $p_0,q_0,b_0$ be positive integers such that $\gcd(b_0,q_0)=1$, and such that: 

\[ \left|  \frac{p_0}{q_0} -u \right|_{\mod 1} \leq \frac{\epsilon}{2}, \; \left|  \frac{p_0b_0}{q_0} -v \right|_{\mod 1} \leq \frac{\epsilon}{2}  \] 

By the Bezout theorem, there are integers $a_0,s_0$, with $a_0 > 0$, such that $a_0 b_0 - s_0 q_0=1$.

Suppose we have defined $p_k,q_k,a_k,b_k,s_k$, satisfying the assumptions of lemma \ref{lemmefondarotisom}, up to the rank $k=n$, and let us define $p_{n+1},q_{n+1},a_{n+1},b_{n+1},s_{n+1}$. (we will have $s_n=1$ for $n \geq 1$). Let $b_{n+1}=b_n$.

Let $c_n$ be an integer sufficiently large so that $c_n \geq (b_{n+1} q_n)^{R_{\ref{r0csterotisom}}(n)}$ and $c_n \geq b_{n+1} 2^{n+1}/\epsilon$ ($b_n=b_0$  is constant here, but this more general definition is used for the case $(\alpha,\beta)$ rationally independent). Let

\[ a_{n+1}= a_n + s_n c_n q_n \]

Therefore, assumption \ref{65rotisom} holds. Let also 

\[ q_{n+1}= q_n s_n (1+c_n b_n) \]

Therefore, assumption \ref{63rotisom} holds. Moreover, we have:

\[ a_{n+1}b_{n+1}-q_{n+1}=1   \]

Therefore, assumption \ref{61rotisom} holds, with $s_{n+1}=1$. Moreover, let $p_{n+1}= p_n \frac{q_{n+1}}{q_n} +1$. Since $q_{n+1} \geq (b_{n+1} q_n)^{R_{\ref{r0csterotisom}}(n)}$, we have:

\[ 0< \left| \frac{p_{n+1}}{q_{n+1}} - \frac{p_n}{q_n} \right|= \frac{1}{q_{n+1}} \leq \frac{1}{(b_{n+1} q_n)^{R_{\ref{r0csterotisom}}(n)}}  \]

Therefore, assumption \ref{66rotisom} holds. Moreover,

\[ \frac{p_n}{q_n} b_n = \frac{p_0}{q_0}b_0 + \sum_{k=0}^{n-1} \left( \frac{p_{k+1}}{q_{k+1}} b_{k+1} - \frac{p_k}{q_k} b_k \right) =_{\mod 1} \frac{p_0}{q_0}b_0 +   \sum_{k=0}^{n-1} \left( \frac{p_{k+1}}{q_{k+1}} - \frac{p_k}{q_k} \right) b_{k+1}=   \frac{p_0}{q_0}b_0 +   \sum_{k=0}^{n-1} \frac{b_{k+1}}{q_{k+1}} \]

Since $1/q_{n+1} \leq \epsilon/ (2^{n+1} b_{n+1})$, we get:
 \[ \left| \frac{p_{n}}{q_{n}}b_n - \frac{p_0}{q_0}b_0 \right| \leq \frac{\epsilon}{2} \]

Therefore, \[\frac{p_{n}}{q_{n}}b_n \rightarrow \beta \]

with $|\beta-u| \leq \epsilon$

Likewise, 

\[\frac{p_{n}}{q_{n}} \rightarrow \alpha \]

with $|\alpha-v| \leq \epsilon$. Moreover, we have $\beta= b_0 \alpha$.

\end{proof}

\subsubsection{The case $(\alpha,\beta)$ rationally independent}

We show:

\begin{lemma}

For any $u,v \in \varmathbb{T}^1$, for any $\epsilon >0$, there exist $(\alpha,\beta) \in \varmathbb{T}^1 \times \varmathbb{T}^1$ in a $\epsilon$-neighborhood of $(u,v)$, such that there exist sequences of integers $p_n,q_n, a_n,b_n \in \varmathbb{N}^*$, $s_n  \in \varmathbb{Z}^*$ satisfying the assumptions of lemma \ref{lemmefondarotisom}, such that \[ \frac{p_n}{q_n} \rightarrow^{mod 1} \alpha, \;  \frac{p_n}{q_n} b_n \rightarrow^{mod 1} \beta \]

Moreover, $\beta$ can be chosen rationally independent of $\alpha$.

\end{lemma}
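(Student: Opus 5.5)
We follow the inductive scheme of the rationally dependent case, but let $b_n$ grow. We choose $p_0,q_0,b_0$ with $\gcd(b_0,q_0)=1$, $|p_0/q_0-u|_{\mod 1}\le\epsilon/2$ and $|p_0b_0/q_0-v|_{\mod 1}\le \epsilon/2$, and Bezout integers $a_0>0$, $s_0$ with $a_0b_0-s_0q_0=1$.

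\textbf{Inductive step.} Given $p_n,q_n,a_n,b_n,s_n$ with $a_nb_n-s_nq_n=1$, we choose a new integer $b_{n+1}>b_n$ coprime to $a_n$; concretely $b_{n+1}=b_n+m_nq_n$ with $m_n\ge 1$, so that $a_nb_{n+1}\equiv 1 \pmod{q_n}$. We then take $c_n$ huge, namely
\[ c_n\ge (b_{n+1}q_n)^{R_{\ref{r0csterotisom}}(n)}, \qquad c_n\ge b_{n+1}2^{n+1}/\epsilon, \]
and set $a_{n+1}=a_n+c_nq_n$, so that assumption \ref{65rotisom} holds. Since $a_{n+1}b_{n+1}\equiv 1\pmod{q_n}$, we can write $a_{n+1}b_{n+1}-1=q_ns'$ with $s'=c_nb_{n+1}+(a_nb_{n+1}-1)/q_n$. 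We put $q_{n+1}=q_ns'$ and $s_{n+1}=1$. Then assumptions \ref{61rotisom} and \ref{63rotisom} hold, with $q_{n+1}\ge c_n$. Finally we set $p_{n+1}=p_nq_{n+1}/q_n+1$, so $p_{n+1}/q_{n+1}-p_n/q_n=1/q_{n+1}$, and assumption \ref{66rotisom} follows from $q_{n+1}\ge c_n$. The sequence $p_n/q_n$ converges to some $\alpha$ with $|\alpha-u|\le\epsilon$.

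\textbf{Convergence of $\beta$.} Modulo 1 we have
\[ \frac{p_{n+1}b_{n+1}}{q_{n+1}}-\frac{p_nb_n}{q_n}\equiv \frac{b_{n+1}}{q_{n+1}}+\frac{p_n(b_{n+1}-b_n)}{q_n}=\frac{b_{n+1}}{q_{n+1}}+p_nm_n\equiv \frac{b_{n+1}}{q_{n+1}}. \]
This is exactly why $b_{n+1}-b_n$ must be a multiple of $q_n$. The increments are therefore at most $\epsilon/2^{n+1}$, so $p_nb_n/q_n\to\beta$ with $|\beta-v|\le\epsilon$.

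\textbf{Rational independence (main obstacle).} We need $k\alpha+l\beta+r\neq 0$ for all nontrivial integer triples $(k,l,r)$. We use the remaining freedom, the choice of $m_n$ (and the lower bound on $c_n$), as follows.

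Enumerate all nonzero triples $(k,l,r)$ as a sequence $\tau_n$ in which every triple appears infinitely often. Write $\alpha_n=p_n/q_n$ and $\beta_n=p_nb_n/q_n$. The tails satisfy
\[ |\alpha-\alpha_{n+1}|\le\frac{2}{q_{n+2}}, \qquad |\beta-\beta_{n+1}|\lesssim \frac{b_{n+2}}{q_{n+2}}, \]
and these are as small as we like, because $c_{n+1}$ is chosen after $b_{n+2}$. It therefore suffices to ensure, at step $n$ and for $\tau_n=(k,l,r)$, that $|k\alpha_{n+1}+l\beta_{n+1}+r|\ge \delta_n>0$ for some explicit $\delta_n$, and then to require all later $c_j$ to be large enough that the remaining perturbation stays below $\delta_n/2$.

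Modulo $\mathbb{Z}$, $\beta_{n+1}=\beta_n+b_{n+1}/q_{n+1}$ and $\alpha_{n+1}=\alpha_n+1/q_{n+1}$. Hence
\[ k\alpha_{n+1}+l\beta_{n+1}+r \equiv (k\alpha_n+l\beta_n+r)+\frac{k+lb_{n+1}}{q_{n+1}} \pmod 1, \]
and the quotient $(k+lb_{n+1})/q_{n+1}\approx l/(c_nq_n)$ is tiny. If the whole quantity vanishes mod 1, we change the choices: varying $m_n$ (or $c_n$) by one changes it while keeping it a rational number with controlled denominator. So a nonzero value can always be achieved, and it is bounded below by $1/(\text{denominator})\ge 1/q_{n+1}^2$. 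We fix this as $\delta_n$, and only afterwards choose the subsequent $c_j$ so large that $\sum_{j>n}(|k|+|l|b_{j+1})\cdot 2/q_{j+1}<\delta_n/2$.

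When $l=0$ the condition is $k\alpha\notin\mathbb{Q}$, i.e.\ the irrationality of $\alpha$. This holds because $\alpha$ is Liouville-type: the approximations are strictly nonzero and super-fast. The delicate bookkeeping is the ordering: $\delta_n$ is fixed before the later $c_j$ are chosen. Since all constraints on $c_j$ are lower bounds, they are compatible.
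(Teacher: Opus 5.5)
Your proposal is correct, and it proves rational independence by a different route from the paper. Your construction of the integer sequences essentially coincides with the paper's, which takes $m_n=1$, i.e.\ $b_{n+1}=b_n+q_n$.

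\textbf{How the paper gets independence.} The paper imposes no extra condition. It shows that the translation $T^{(\alpha,\beta)}$ of $\varmathbb{T}^2$ is ergodic, using the Katok--Stepin theorem. The approximating maps are the periodic translations $T^{\frac{p_n}{q_n}(1,b_n)}$. These permute cyclically a partition $\hat{\zeta}_n$ into $q_n/g_n$ strips, built on a transversal of length $1/b_n$ of the flow in direction $(1,b_n)$. Generation of $\hat{\zeta}_n$ uses two facts: $b_n\to\infty$, and $g_n=\gcd(p_n,q_n)\le q_{n-1}$, which follows from $p_{n+1}-\frac{q_{n+1}}{q_n}p_n=1$. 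The required approximation speed $o(g_n/q_n)$ comes from the growth already imposed by assumption \ref{66rotisom}. So in the paper, independence is an automatic consequence of fast cyclic approximation for the explicit recursion, with no enumeration of relations.

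\textbf{What your route buys.} Your diagonalization over integer relations is more elementary and needs no ergodic theory. The price is extra adaptive lower bounds on the later $c_j$, which depend on the previously fixed $\delta_i$. It also makes visible why $m_n\ge 1$ matters: with $m_n=0$ one gets $\beta=b_0\alpha$.

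\textbf{Points to state more carefully.}

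First, $p_nb_n/q_n$ converges only mod $1$. So the relation must be read either with the distance to $\varmathbb{Z}$, or on the real lifts $\tilde{\beta}_n=\frac{p_0b_0}{q_0}+\sum_{j<n}\frac{b_{j+1}}{q_{j+1}}$. On these lifts your increment formula is exact.

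Second, the case analysis should be explicit. If $k\alpha_n+l\beta_n\notin\varmathbb{Z}$, a large enough $c_n$ keeps $k\alpha_{n+1}+l\beta_{n+1}$ off $\varmathbb{Z}$. If $k\alpha_n+l\beta_n\in\varmathbb{Z}$, then, since $|k+lb_{n+1}|<q_{n+1}$, you need $k+lb_{n+1}\neq 0$. Only the choice of $m_n$ can ensure this, not $c_n$, and it rules out at most one value of $m_n$.

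Third, for $l=0$ the condition is $\alpha\notin\varmathbb{Q}$, not $k\alpha\notin\varmathbb{Q}$. This case is already covered by your diagonal argument, so the separate remark is unnecessary.
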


\begin{proof}
The beginning of the construction is the same as in the case $\beta=p\alpha$, except that we take:

\[ b_{n+1}=b_n+q_n \]

\[ q_{n+1}= s_n q_n ( 1+c_nb_n +c_n q_n + a_n ) \]

This ensures that $b_n \rightarrow + \infty$ as $n \rightarrow + \infty$, and that 
\[ a_{n+1}b_{n+1}-q_{n+1}=1   \]

It only remains to show that the limit angles $(\alpha,\beta)$ are rationally independent. To that aim, it suffices to show that the translation of vector $(\alpha,\beta)$ on the torus $\varmathbb{T}^2$ is ergodic. We follow the proof of the ergodicity of the limit translation in the previous chapter, with a slight modification. We recall a theorem by Katok and Stepin \cite{katokstepin67}:

\begin{theorem}[Katok-Stepin \cite{katokstepin67}]
\label{katokstepintheoremrotisom}
Let $U$ be an automorphism of a Lebesgue space $(N,\nu)$, let $(U_n)_{ n \geq 1}$ be a sequence of measure-preserving transformations, and let $(\chi_n)_{ n \geq 1}$ be a sequence of finite partitions of $N$ with measurable elements. Suppose that:

\begin{itemize}
\item $U_n$ permutes the elements of $\chi_n$ cyclically.
\item $(\chi_n)_{ n \geq 1}$ generates.
\item $\sum_{c \in \chi_n} \nu \left( U(c) \Delta U_n (c)  \right) = o(1/|\chi_n|)$ (where $|\chi_n|$ is the cardinal of $\chi_n$).
\end{itemize}
then $U$ is ergodic.
\end{theorem}

Let $\gn= (1,b_n)$, $g_n = \gcd (p_n,q_n)$. Let $\Gamma^{(n)} \subset \varmathbb{T}^{2}$ a fundamental domain of the flow $(T^{t\gamma^{(n)}})_{t \geq 0}$ on $\varmathbb{T}^2$, where $T^{t\gamma^{(n)}}$ is the translation of vector $t\gamma^{(n)}$. Note that the diameter of $\Gamma^{(n)} $ is less than $1/b_n$. Let 
 
\[ \Gamma_{0,n} = \bigcup_{0 \leq t < \frac{g_n}{q_n}} T^{t\gamma^{(n)}} \Gamma^{(n)}  \]

%\[  \hat{\zeta}_n = \{  \Gamma_{k,n}= T^{\frac{g_n}{q_n}\gamma^{(n)}} \Gamma_{0,n}, 0 \leq k \leq \frac{q_n}{g_n} -1  \} \] 

We have the lemma:

\begin{lemma}
\label{lemmehatzetanrotisom}

Let $\hat{\zeta}_n$ be the partition defined by:

\[  \hat{\zeta}_n = \left\{ \Gamma_{i,n}=  T^{i \frac{g_n \gn}{q_n}} \Gamma_{0,n}, \, i=0,...,\frac{q_n}{g_n}-1  \right\} \]
$T^{\frac{p_n}{q_n} \gn}$ is a cyclic permutation on $\hat{\zeta}_n$, and $\hat{\zeta}_n$ generates.
\end{lemma}

\begin{proof}
$T^{\frac{p_n}{q_n} \gn}$ is a cyclic permutation on $\hat{\zeta}_n$ because $g_n = \gcd (p_n,q_n)$.

To the vector space $\varmathbb{R}^2$, we give the norm $\|(x_1,x_2)\|= \max_{1 \leq i \leq 2} |x_i| $ and we consider its induced norm on $\varmathbb{T}^2$.

Since

\[ p_{n+1} - \frac{q_{n+1}}{q_n} p_n = 1 \]

then $ p_{n+1}$ and $\frac{q_{n+1}}{q_n}$ are relatively prime. Since $g_{n+1}$ divides $p_{n+1}$ and $q_{n+1}$, then $g_{n+1}$ divides $q_{n}$. In particular, $g_{n+1} \leq q_n$ (this is the slight difference with the proof in the previous chapter, in which we do not have: $\gcd( p_{n+1},\frac{q_{n+1}}{q_n})=1$. But on the other hand, in the previous chapter, we have: $\gcd( p_{n+1},\frac{q_{n+1}}{q_n})=\gcd( p_{n+1},q_{n+1})$. The important point is that in both cases, $g_n$ is small enough).

Moreover, by assumption \ref{66rotisom} of lemma \ref{lemmefondarotisom}, \[ \frac{b_{n+1}q_n}{q_{n+1}} \leq \frac{1}{(b_{n+1} q_n)^{R_{\ref{r0csterotisom}}(n)-1}} \rightarrow_{n \rightarrow + \infty} 0 \]
Therefore,

\[ diam(\Gamma_{0,n}) \leq \max\left( \frac{1}{b_n}, \frac{g_n \| \gn \| }{q_n} \right) \leq \max\left( \frac{1}{b_n}, \frac{q_{n-1} b_n}{q_n} \right) \rightarrow_{n \rightarrow + \infty} 0 \]

It shows that $\hat{\zeta}_n$ generates.
\end{proof}

It remains to estimate $\sum_{c \in \hat{\zeta}_n} \mu_2 \left( T^{ \alpha } c \Delta  T^{ \frac{p_{n}}{q_{n}} \gn } c  \right)$, where $\mu_2$ is the Lebesgue measure on $\varmathbb{T}^2$. We have the lemma:

\begin{lemma}
\label{speedofapproximrotisom}
%There is a choice of $R_{\ref{r0csterotisom}}(n)$ in condition \ref{65rotisom} of lemma \ref{lemmefondarotisom} such that:

We have:
\[ \sum_{c \in \hat{\zeta}_n} \mu_2 \left( T^{ (\alpha,\beta) } c \Delta  T^{ \frac{p_{n}}{q_{n}} \gn } c  \right) =  o(g_n/q_n)= o(1/|\hat{\zeta}_n|) \]

\end{lemma}

\begin{proof}

We have:

\[ \sum_{c \in \hat{\zeta}_n} \mu_2 \left( T^{ (\alpha,\beta) } c \Delta  T^{ \frac{p_{n}}{q_{n}} \gn } c  \right)= \sum_{k \geq n} \sum_{c \in \hat{\zeta}_k} \mu_2 \left( T^{ \frac{p_{k+1}}{q_{k+1}} \gku } c \Delta  T^{ \frac{p_{k}}{q_{k}} \gk } c  \right)= \sum_{k \geq n} \sum_{c \in \hat{\zeta}_k} \mu_2 \left( T^{ \frac{p_{k+1}}{q_{k+1}} \gku - \frac{p_{k}}{q_{k}} \gk } c \Delta c  \right) \]

Let $\tau_k$ be the $(h-1)$-volume of the border of an element of $\hat{\zeta}_k$. We have:

\[ \mu_2 \left( T^{ \frac{p_{k+1}}{q_{k+1}} \gku - \frac{p_{k}}{q_{k}} \gk } c \Delta c  \right) \leq \tau_k \left\| \frac{p_{k+1}}{q_{k+1}} \gku - \frac{p_{k}}{q_{k}} \gk \right\| = \tau_k \left\|  \left( \frac{p_{k+1}}{q_{k+1}}  - \frac{p_{k}}{q_{k}} \right)  \gku \right\|= \tau_k \frac{b_{k+1}}{q_{k+1}} \]

Moreover, \[ \tau_k \leq 2 \left( \frac{1}{b_k} + \frac{g_kb_k}{q_k}  \right) \leq 2 \left( \frac{1}{b_k} + \frac{q_{k-1}b_k}{q_k}  \right) \leq 4 \]

Therefore, by applying assumption \ref{66rotisom},  %there is a choice of $R_{\ref{r0csterotisom}}(n)$ such that:

\[ \sum_{c \in \hat{\zeta}_n} \mu_2 \left( T^{ (\alpha,\beta) } c \Delta  T^{ \frac{p_{n}}{q_{n}} \gn } c  \right) =  o(g_n/q_n) \]

\end{proof} 

%there is a choice of $R_{\ref{r0csterotisom}}(n)$ such that 
By combining lemmas \ref{lemmehatzetanrotisom} and \ref{speedofapproximrotisom}, and by applying theorem \ref{katokstepintheoremrotisom}, we obtain that the translation of vector $(\alpha,\beta)$ is ergodic with respect to the Lebesgue measure.

This completes the proof of lemma \ref{lemme1point3rotisom}.
\end{proof}

\bigskip

\subsubsection{Why not a construction for all $\alpha$ Liouville?}

Let us make one remark. We were not able to show our theorem for any $\alpha$ Liouville, because conditions \ref{61rotisom}-\ref{65rotisom} of lemma \ref{lemmefondarotisom} introduce arithmetical constraints on the denominators of the convergents of $\alpha$. These conditions are analogous to those, in the previous chapter, which limit the set of possible translations of the $h$-dimensional torus, $h\geq 2$, that admit a non-standard smooth realization. 

A sufficient condition for $\alpha$ Liouville to belong to a non-standard couple of angles $(\alpha,\beta)$ with $\alpha \not\eq \pm \beta$, is the following: if, for the sequence $p'_n,q'_n$ of convergents of $\alpha$, there exist positive integers $c_n,d_n$, with $d_n \leq q_n^{R(n)}$ for a fixed sequence $R(n)$, such that we can write: 

\[ q'_{n+1}= 1+c_nb_n +d_n a_n+ c_n d_n q_n \]

then there is $\beta \not\eq \pm \alpha$ such that $(\alpha,\beta)$ is a non-standard couple of angles. (in this construction, we take $b_{n+1}=b_n + d_n q_n$, with $d_n \in \varmathbb{N}$)

\bigskip

\subsection{Convergence modulo 1 of $\frac{p_n}{q_n}$ and $  \frac{p_n}{q_n} b_n$ towards $\alpha$ and $\beta$.}

The rest of the paper is dedicated to the proof of lemma \ref{lemmefondarotisom}. Part of lemma \ref{lemmefondarotisom} is straightforward, namely, the convergence modulo 1 of $\frac{p_n}{q_n}$ and $  \frac{p_n}{q_n} b_n$ towards $\alpha$ and $\beta$ respectively:

\begin{proof}[Partial proof of lemma \ref{lemmefondarotisom}.]
By assumption $\ref{63rotisom}$, for $n \geq 2$, $q_n \geq 2$. By assumption $\ref{65rotisom}$, and since $R_{\ref{r0csterotisom}}(n) \geq n$, $p_n/q_n$ is Cauchy, and converges.

To show the convergence modulo 1 of $\frac{p_n}{q_n} b_n$, we note that assumptions $\ref{61rotisom}$ at ranks $n$ and $n+1$, and assumption $\ref{65rotisom}$ at rank $n$ imply that $q_n$ divides $b_{n+1}-b_n$. Indeed, let us write $b_{n+1}=b_n+k$, with $k$ integer, and let us show that $q_n$ divides $k$. By the assumption $\ref{65rotisom}$ at rank $n$, $a_{n+1}=a_n+c_n q_n$, with $c_n$ integer. Therefore,

\[ 1+s_{n+1} q_{n+1}=a_{n+1}b_{n+1}= (a_n+c_n q_n)(b_n+k)=a_nb_n+ a_nk + q_n(c_n b_n+c_nk)   \]

Therefore,
 
 \[  q_n \left(s_{n+1} \frac{q_{n+1}}{q_n}-s_n-c_n b_n -c_nk \right)=a_n k \]

Thus, $q_n$ divides $a_nk$. Since $q_n$ is relatively prime with $a_n$, then $q_n$ divides $k$. Therefore, $b_n/q_n=b_{n+1}/q_n$ mod 1. Therefore,

\[  \left| \frac{p_{n+1}b_{n+1}}{q_{n+1}} - \frac{p_nb_n}{q_n} \right|=^{mod 1}  \left| \frac{p_{n+1}}{q_{n+1}} - \frac{p_n}{q_n} \right| |b_{n+1}| \leq  \frac{1}{(b_{n+1} q_n)^{R_{\ref{r0csterotisom}}(n)-1}}   \]

Since for $n \geq 1$, $q_n \geq 2$ and $R_{\ref{r0csterotisom}}(n)-1 \geq n-1$, then the sequence $\left(\frac{p_nb_n}{q_n} \, \mod 1 \, \right)_{n \geq 1}$ is Cauchy, and converges.

\end{proof}

To show lemma \ref{lemmefondarotisom}, it remains to show that there is a smooth ergodic measure-preserving diffeomorphism $T$ of $M$ such that $T \in \bar{\mathcal{A}_{\alpha}}$  and such that $(\varmathbb{T}^1, R_\beta, Leb)$ is metrically isomorphic to $(M,T,\mu)$. 

% between the rotation $R_\beta$ and the ergodic diffeomorphism with $T_{|\ddd M}=S_{\alpha |\ddd M}$$T_{|\ddd M}=S_{\alpha |\ddd M}$ with $(D^jT)_{|\ddd M}=(D^jS_{\alpha})_{|\ddd M}$ for any $j \in \varmathbb{N}$,

\section{The metric isomorphism}

In this section, our aim is to elaborate sufficient conditions on $B_n \in$ Diff$^\infty(M,\mu)$, where $M=[0,1]^{d-1} \times \varmathbb{T}^1$, so that if $T_n= B_n^{-1} S_{\frac{p_n}{q_n}} B_n$ weakly converges towards an automorphism $T$, then there exists a metric isomorphism between $(\varmathbb{T}^1,R_\beta,Leb)$ and $(M,T,\mu)$.

%In this section, we construct the isomorphism between the rotation $R_\beta$ on $\varmathbb{T}^1$ and the ergodic diffeomorphism   on $M=[0,1]^{d-1} \times \varmathbb{T}^1$. 

To that end, we use lemma \ref{lemmekatokrotisom}: we construct a monotonous and generating sequence of partitions $\xi_n^\infty$ of $M$ and a sequence of isomorphisms $\bar{K}_n^\infty: \varmathbb{T}^1/\zeta_n \rightarrow M/ \xi_n^\infty$, where $\zeta_n=\{ [i/q_n, (i+1)/q_n[,i=0,...,q_n-1  \}$, such that $ \bar{K}_n^\infty R_{\frac{p_n}{q_n}} = T_n\bar{K}_n^\infty $ and $\bar{K}_{n+1|\zeta_n}^\infty=\bar{K}_n^\infty$. 

$\zeta_n$ is a partition of $\varmathbb{T}^1$ that is monotonic (because $q_n$ divides $q_{n+1}$) and that generates (because $q_n \rightarrow + \infty$). Let $\eta_n= \{ I \times [j/q_n, (j+1)/q_n[ , j=0,...,q_n-1 \}$. $\eta_n$ is a monotonic partition of $M$. %$\eta_n$ is monotonic and generates because  (because $\varmathbb{T}^1$Since $a_n$ and $q_n$ are relatively prime,

The following lemma is straightforward, but important: 

\begin{lemma}
\label{lempermutation}
Let $a_n$ and $q_n$ two relatively prime integers, and let

\[ K_n : \begin{array}[t]{lcl} \zeta_n  &\rightarrow &   \eta_n  \\
               \left[ \frac{i}{q_n}, \frac{i+1}{q_n} \right[  & \mapsto    & I \times \left[ \frac{ia_n}{q_n}, \frac{ia_n+1}{q_n} \right[ 
           \end{array}
           \]

$K_n$ is a metric isomorphism such that $K_nR_{\frac{1}{q_n}} = S_{\frac{a_n}{q_n}} K_n$. In other words, the following diagram commutes:

\[ \xymatrix{
 \zeta_n \ar@(ul,dl)[]_{R_{\frac{1}{q_n}}}  \ar@{->}[r]^{K_n} & \eta_n \ar@(ur,dr)[]^{S_{\frac{a_n}{q_n}}} 
 } \]

\end{lemma}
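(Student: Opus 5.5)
The proof is a direct verification in three steps: $K_n$ is a well-defined bijection between the atoms, it preserves measure, and it intertwines the two rotations on the level of atoms. Since $\zeta_n$ and $\eta_n$ are finite partitions whose atoms all have measure $1/q_n$ (with respect to Lebesgue on $\varmathbb{T}^1$ and $\mu$ on $M=I\times\varmathbb{T}^1$, since $\mu(I)=1$), a metric isomorphism of the quotient algebras $\varmathbb{T}^1/\zeta_n\to M/\eta_n$ is just a bijection between atoms. It extends to unions and so preserves measure.

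First, I check that $K_n$ is well defined and bijective. The index $i$ is read modulo $q_n$, and the image index $ia_n$ is also taken modulo $q_n$. Since $(i+q_n)a_n \equiv ia_n \pmod{q_n}$, the image atom $I\times[\frac{ia_n}{q_n},\frac{ia_n+1}{q_n}[$ depends only on $i \bmod q_n$, because the sector is read on $\varmathbb{T}^1$. Because $\gcd(a_n,q_n)=1$, multiplication by $a_n$ is a bijection of $\varmathbb{Z}/q_n\varmathbb{Z}$: it is injective since $q_n \mid (i-i')a_n$ forces $q_n\mid i-i'$, and a finite injective map is onto. So $K_n$ is a bijection from the $q_n$ atoms of $\zeta_n$ to the $q_n$ atoms of $\eta_n$. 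Every atom on either side has measure $1/q_n$, so the induced map of finite measure algebras preserves measure. It can be realized by a measure-preserving bimeasurable point map, for instance by sending each interval affinely onto the circle factor and taking a measure-isomorphism $[0,1/q_n[\,\to I\times[0,1/q_n[$ of Lebesgue spaces. Hence it is a metric isomorphism $\varmathbb{T}^1/\zeta_n\to M/\eta_n$.

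Second, I check the intertwining relation on atoms. $R_{1/q_n}$ sends $[\frac{i}{q_n},\frac{i+1}{q_n}[$ to $[\frac{i+1}{q_n},\frac{i+2}{q_n}[$, so $K_nR_{1/q_n}$ gives $I\times[\frac{(i+1)a_n}{q_n},\frac{(i+1)a_n+1}{q_n}[$. On the other side, $S_{a_n/q_n}$ shifts the circle coordinate by $a_n/q_n$, sending $I\times[\frac{ia_n}{q_n},\frac{ia_n+1}{q_n}[$ to $I\times[\frac{ia_n+a_n}{q_n},\frac{ia_n+a_n+1}{q_n}[$. The two atoms coincide, so $K_nR_{1/q_n}=S_{a_n/q_n}K_n$ as maps of the quotient algebras, and the diagram commutes.

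There is no real obstacle; the only point needing care is bookkeeping of indices modulo $q_n$. One must confirm that sectors like $[\frac{ia_n}{q_n},\frac{ia_n+1}{q_n}[$ with $ia_n\geq q_n$ are interpreted on $\varmathbb{T}^1$, which is exactly the circular-sector convention fixed earlier. With that, coprimality of $a_n$ and $q_n$, guaranteed in our setting by assumption \ref{61rotisom} of lemma \ref{lemmefondarotisom}, is the only arithmetic input.
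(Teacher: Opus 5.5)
Your proof is correct and follows the same reasoning as the paper, which gives no formal proof and only remarks that $R_{1/q_n}$ on $\zeta_n$ and $S_{a_n/q_n}$ on $\eta_n$ are both cyclic permutations of $q_n$ atoms of equal measure, hence conjugate. Your explicit check makes that remark concrete: by coprimality, $i\mapsto ia_n \bmod q_n$ is a bijection, and it intertwines $i\mapsto i+1$ with $j\mapsto j+a_n$. Your aside about realizing $K_n$ by a point map is not needed for a statement about the finite quotient algebras, but it is harmless.
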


This lemma is related with two basic observations: the first is that both $R_{\frac{1}{q_n}}$ and $S_{\frac{a_n}{q_n}}$ are isomorphic to cyclic permutations of $\{ 0,...,q_n-1\}$ (this set is given the counting measure, i.e. $\mu (A)=\# A$); the second observation is that two cyclic permutations of the same order are always conjugated.

\bigskip

The following lemma combines lemma \ref{lempermutation} with the facts that $\zeta_n  \hookrightarrow  \zeta_{n+1}$ and 

$\eta_n \hookrightarrow  \eta_{n+1}$:

\begin{lemma}
\label{diagcomutrotisom}
Let $a_n,a_{n+1},q_n,q_{n+1} \in \varmathbb{N}$ such that $\gcd(a_n,q_n)=\gcd(a_{n+1},q_{n+1})=1$, such that $q_n$ divides $q_{n+1}$ and such that $q_n$ divides $a_{n+1}-a_n$. There exists a partition $\eta_n^{n+1} \hookrightarrow  \eta_{n+1}$ of $M$ stable by $S_{\frac{a_n}{q_n}}$, and there exists a metric isomorphism $K_n^{n+1}: \zeta_n \rightarrow \eta_n^{n+1}$ such that $K_n^{n+1}=K_{n+1|\zeta_n}$ and such that $K_n^{n+1} R_{\frac{1}{q_n}} =S_{\frac{a_n}{q_n}} K_n^{n+1}$. There exists also a metric isomorphism $C_n^{n+1}: \eta_n \rightarrow \eta_n^{n+1}$ such that $C_n^{n+1} S_{\frac{a_n}{q_n}} =S_{\frac{a_n}{q_n}} C_n^{n+1}$ and $K_n^{n+1}=C_n^{n+1}K_n$. Said otherwise, we have the following commutative diagram:

\[ \xymatrix{
  \zeta_n \ar@(ul,dl)[]_{R_{\frac{1}{q_n}}}  \ar@{->}[r]^{K_n} \ar@{->}[d]_{Id} & \eta_n \ar@(ur,dr)[]^{S_{\frac{a_n}{q_n}}} \ar@{->}[d]^{C_n^{n+1}} \\
 \zeta_n \ar@(ul,dl)[]_{R_{\frac{1}{q_n}}}  \ar@{->}[r]^{K_n^{n+1}} \ar@{^{(}->}[d]  & \eta_n^{n+1} \ar@(ur,dr)[]^{S_{\frac{a_n}{q_n}}} \ar@{^{(}->}[d]  \\
 \zeta_{n+1}  \ar@{->}[r]^{K_{n+1}}  & \eta_{n+1} 
 } \]

%\xymatrix{
% \zeta_n \ar@{->}[r]^{K_n} \ar@{^{->}[d]_{Q_n^{n+1}} & \eta_n \ar@{->}[d]^{C_n^{n+1}} \\
% \zeta_n^{n+1}  \ar@{^{(}->}[d]  \ar@{->}[r]^{K_n^{n+1}}  & \eta_n^{n+1} \ar@(dl,ul)[]|{S_{\frac{1}{q_n}}}
% }

%\ar@(ur,dr)[]^{S_{\frac{a_{n+1}}{q_{n+1}}}}\ar@(ul,dl)[]_{R_{\frac{1}{q_{n+1}}}}

% \\   \zeta_{n+1}  \ar@{->}[r]^{K_{n+1}}  & \eta_{n+1}

\end{lemma}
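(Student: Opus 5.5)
The plan is to define everything explicitly from $K_{n+1}$ and then check the three identities. I would set
\[ K_n^{n+1} := K_{n+1|\zeta_n}, \qquad \eta_n^{n+1} := K_{n+1}(\zeta_n) . \]
Here $K_{n+1}(\zeta_n)$ means the following. Each element $[i/q_n,(i+1)/q_n[$ of $\zeta_n$ is the union of the $m=q_{n+1}/q_n$ elements $[(im+l)/q_{n+1},(im+l+1)/q_{n+1}[$, $l=0,\dots,m-1$, of $\zeta_{n+1}$. Its image under $K_{n+1}$ is
\[ E_i=\bigcup_{l=0}^{m-1} I\times \left[\frac{(im+l)a_{n+1}}{q_{n+1}},\frac{(im+l)a_{n+1}+1}{q_{n+1}}\right[ . \]
Since $K_{n+1}$ (Lemma \ref{lempermutation} with $\gcd(a_{n+1},q_{n+1})=1$) is a bijection $\zeta_{n+1}\to\eta_{n+1}$, the sets $E_i$, $i=0,\dots,q_n-1$, are disjoint unions of elements of $\eta_{n+1}$ covering $M$. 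So $\eta_n^{n+1}=\{E_i\}$ is a partition with $\eta_n^{n+1}\hookrightarrow\eta_{n+1}$, each $E_i$ has measure $1/q_n$, and $K_n^{n+1}$ is a metric isomorphism $\zeta_n\to\eta_n^{n+1}$.

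The key step is the intertwining $K_n^{n+1}R_{1/q_n}=S_{a_n/q_n}K_n^{n+1}$. The rotation $R_{1/q_n}=R_{m/q_{n+1}}$ sends the $\zeta_{n+1}$-interval of index $j$ to index $j+m$. By Lemma \ref{lempermutation} at rank $n+1$ we have $K_{n+1}R_{1/q_{n+1}}=S_{a_{n+1}/q_{n+1}}K_{n+1}$; iterating it $m$ times gives
\[ K_{n+1}R_{1/q_n}=S_{ma_{n+1}/q_{n+1}}K_{n+1}=S_{a_{n+1}/q_n}K_{n+1} . \]
By hypothesis $a_{n+1}\equiv a_n \pmod{q_n}$, so $S_{a_{n+1}/q_n}=S_{a_n/q_n}$ as maps of $M$. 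Restricting to the algebra of $\zeta_n$ gives the identity, and in particular $\eta_n^{n+1}$ is $S_{a_n/q_n}$-stable. This is exactly where the hypothesis $q_n \mid a_{n+1}-a_n$ enters, and I expect it to be the only nontrivial point.

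Finally I would set $C_n^{n+1}:=K_n^{n+1}K_n^{-1}\colon\eta_n\to\eta_n^{n+1}$. This is a metric isomorphism, being the composition of isomorphisms of finite algebras with equal-measure atoms (all atoms have measure $1/q_n$). By construction $K_n^{n+1}=C_n^{n+1}K_n$. Commutation follows from Lemma \ref{lempermutation} at rank $n$ and the previous step:
\[ C_n^{n+1}S_{a_n/q_n}=K_n^{n+1}R_{1/q_n}K_n^{-1}=S_{a_n/q_n}K_n^{n+1}K_n^{-1}=S_{a_n/q_n}C_n^{n+1} . \]
The remaining squares of the diagram are then the identities just proved, together with the inclusions $\zeta_n\hookrightarrow\zeta_{n+1}$ and $\eta_n^{n+1}\hookrightarrow\eta_{n+1}$.
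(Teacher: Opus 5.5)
Your proof is correct and takes the paper's route: you define $K_n^{n+1}=K_{n+1|\zeta_n}$ and $\eta_n^{n+1}=K_{n+1}(\zeta_n)$, and the intertwining comes from $a_{n+1}/q_n\equiv a_n/q_n \pmod 1$. The paper verifies that identity by an explicit computation on the intervals, whereas you obtain it by iterating the rank-$(n+1)$ relation of Lemma \ref{lempermutation} $q_{n+1}/q_n$ times; this is the same calculation in compact form, and your explicit choice $C_n^{n+1}=K_n^{n+1}K_n^{-1}$ fills in the step the paper only calls ``automatic''.
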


\begin{proof}

Since $\gcd(a_{n+1},q_{n+1})=1$, then by lemma \ref{lempermutation}, $K_{n+1}$ is an isomorphism. Moreover, since $q_n$ divides $q_{n+1}$, then $\zeta_n \hookrightarrow  \zeta_{n+1}$. Therefore, we can define the isomorphism $K_n^{n+1}= K_{n+1|\zeta_n}$. Let $\eta_n^{n+1}=K_n^{n+1}(\zeta_n)$. We have $\eta_n^{n+1} \hookrightarrow  \eta_{n+1}$.

It remains to show that $K_n^{n+1}R_{\frac{1}{q_n}} =S_{\frac{a_n}{q_n}} K_n^{n+1}$ (it automatically implies that $\eta_n^{n+1}$ is stable by $S_{\frac{a_n}{q_n}}$, and that there is $C_n^{n+1}: \eta_n \rightarrow \eta_n^{n+1}$ such that $C_n^{n+1} S_{\frac{a_n}{q_n}} =S_{\frac{a_n}{q_n}} C_n^{n+1}$). Let $0 \leq i \leq q_n-1$. We have:

\[ K_n^{n+1}R_{\frac{1}{q_n}} \left( \left[ \frac{i}{q_n},\frac{i+1}{q_n} \right[ \right)= K_{n+1}R_{\frac{1}{q_n}} \left(  \bigcup_{k=0}^{\frac{q_{n+1}}{q_n}-1} \left[ \frac{i}{q_n}+ \frac{k}{q_{n+1}} , \frac{i}{q_n}+ \frac{k+1}{q_{n+1}} \right[ \right) \]

\[ =  K_{n+1}\left(  \bigcup_{k=0}^{\frac{q_{n+1}}{q_n}-1} \left[ \frac{i+1}{q_n}+ \frac{k}{q_{n+1}} , \frac{i+1}{q_n}+ \frac{k+1}{q_{n+1}} \right[ \right)= \bigcup_{k=0}^{\frac{q_{n+1}}{q_n}-1} K_{n+1}\left(   \left[ \frac{1+i}{q_n}+ \frac{k}{q_{n+1}} , \frac{1+i}{q_n}+ \frac{k+1}{q_{n+1}} \right[ \right) \]
 
\[ =  I \times \bigcup_{k=0}^{\frac{q_{n+1}}{q_n}-1}   \left[ \frac{a_{n+1}}{q_n} + \frac{a_{n+1}i}{q_n}+  \frac{a_{n+1}k}{q_{n+1}} , \frac{a_{n+1}}{q_n} + \frac{a_{n+1}i}{q_n}+ \frac{a_{n+1}k}{q_{n+1}}+ \frac{1}{q_{n+1}} \right[  \]

Since $a_{n+1}/q_n=a_n/q_n$ mod 1, we get:

\[ K_n^{n+1}R_{\frac{1}{q_n}} \left( \left[ \frac{i}{q_n},\frac{i+1}{q_n} \right[ \right) = I \times  \bigcup_{k=0}^{\frac{q_{n+1}}{q_n}-1}   \left[ \frac{a_{n}}{q_n} + \frac{a_{n+1}i}{q_n}+  \frac{a_{n+1}k}{q_{n+1}} , \frac{a_{n}}{q_n} + \frac{a_{n+1}i}{q_n}+ \frac{a_{n+1}k}{q_{n+1}}+ \frac{1}{q_{n+1}} \right[ \]

Therefore,

\[ K_n^{n+1}R_{\frac{1}{q_n}} \left( \left[ \frac{i}{q_n},\frac{i+1}{q_n} \right[ \right) =  \bigcup_{k=0}^{\frac{q_{n+1}}{q_n}-1}  S_{\frac{a_{n}}{q_n}} \left( I \times \left[ \frac{a_{n+1}i}{q_n}+  \frac{a_{n+1}k}{q_{n+1}} , \frac{a_{n+1}i}{q_n}+ \frac{a_{n+1}k}{q_{n+1}}+ \frac{1}{q_{n+1}} \right[  \right) \]

\[  =  S_{\frac{a_{n}}{q_n}} \left( I \times \bigcup_{k=0}^{\frac{q_{n+1}}{q_n}-1}  \left[ \frac{a_{n+1}i}{q_n}+  \frac{a_{n+1}k}{q_{n+1}} , \frac{a_{n+1}i}{q_n}+ \frac{a_{n+1}k}{q_{n+1}}+ \frac{1}{q_{n+1}} \right[ \right) =  S_{\frac{a_{n}}{q_n}} K_n^{n+1} \left( \left[ \frac{i}{q_n},\frac{i+1}{q_n} \right[ \right) \]

\end{proof}

Let us denote $R^{(n)}= K_n^{n+1} \left( \left[ 0,\frac{1}{q_n} \right[ \right) $. 

We also denote $R_{i,n}^{n+1}= S_{\frac{ia_n}{q_n}} R^{(n)}, i=0,...,q_n-1$. $R^{(n)}$ is a fundamental domain of $S_{\frac{a_n}{q_n}} $. Moreover, we have:

%\[ R_{i,n}^{n+1}= K_n^{n+1}   \] 

%Let us also denote $C_n^{n+1}$ the map defined by:

\begin{eqnarray*}
C_n^{n+1}\colon \eta_n &\to & \eta_n^{n+1} \\
 \left[ \frac{ia_n}{q_n},\frac{ia_n+1}{q_n} \right[ &\mapsto & R_{i,n}^{n+1}, \, i=0,...,q_{n+1}-1
\end{eqnarray*}
 
Note also that $C_n^{n+1}S_{\frac{a_n}{q_n}}=S_{\frac{a_n}{q_n}}C_n^{n+1}$. Moreover, by assumption \ref{61rotisom} of lemma \ref{lemmefondarotisom}, $a_nb_n/q_n=1/q_n$ mod 1. Therefore, we get: \[ C_n^{n+1}S_{\frac{1}{q_n}}=C_n^{n+1}S_{\frac{a_nb_n}{q_n}}=S_{\frac{a_nb_n}{q_n}}C_n^{n+1} = S_{\frac{1}{q_n}}C_n^{n+1} \]

\bigskip

By iterating lemma \ref{diagcomutrotisom}, we get a corollary that is important for the construction of the isomorphism:

\begin{corollary}
\label{diagitererotisom}
For any $m>n$, there are partitions $\eta_n^{m} \hookrightarrow  \eta_{n+1}^m$ of $M$ such that $\eta_n^{m}$ is stable by $S_{\frac{1}{q_n}}$ and there exists an isomorphism $K_n^{m}: \zeta_n \rightarrow \eta_n^{m}$ such that $K_n^{m} R_{\frac{1}{q_n}}=S_{\frac{a_n}{q_n}} K_n^{m}$ and $K_n^m=K_{n+1|\eta_n^m}^m$.

Said otherwise, the following diagram commutes:

\[ \xymatrix{
\zeta_n^m \ar@(ul,dl)[]_{R_{\frac{1}{q_n}}}  \ar@{->}[r]^{K_n^{m}} \ar@{^{(}->}[d]  & \eta_n^{m} \ar@(ur,dr)[]^{S_{\frac{a_n}{q_n}}} \ar@{^{(}->}[d]  \\
\zeta_{n+1}   \ar@{->}[r]^{K_{n+1}^{m}} & \eta_{n+1}^{m}  
 } \]

%The left hand side of the diagram comes from section \ref{partitiontorus}.\ar@(ul,dl)[]_{R_{\frac{1}{q_{n+1}}}}
%\ar@(ur,dr)[]^{S_{\frac{a_{n+1}}{q_{n+1}}}}
\end{corollary}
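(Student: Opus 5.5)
The natural definition is $K_n^m = K_{m|\zeta_n}$ and $\eta_n^m = K_m(\zeta_n)$. I will then verify the three properties by downward induction on $m-n$, iterating lemma \ref{diagcomutrotisom}. Since $q_n$ divides $q_{n+1}$ for every $n$ (assumption \ref{63rotisom}), $q_n$ divides $q_m$ for $m>n$, and so $\zeta_n \hookrightarrow \zeta_{n+1} \hookrightarrow \zeta_m$. Because $\gcd(a_m,q_m)=1$ (assumption \ref{61rotisom}), lemma \ref{lempermutation} makes $K_m:\zeta_m\to\eta_m$ an isomorphism, so its restriction $K_n^m$ is an isomorphism onto its image $\eta_n^m$. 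Monotonicity follows at once: $\eta_n^m=K_m(\zeta_n)$ and $\eta_{n+1}^m=K_m(\zeta_{n+1})$, and $\zeta_n\hookrightarrow\zeta_{n+1}$ gives $\eta_n^m\hookrightarrow\eta_{n+1}^m$. The compatibility $K_n^m = K^m_{n+1|\zeta_n}$ holds by construction, since both sides are restrictions of $K_m$.

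The key step is the intertwining relation $K_n^m R_{\frac{1}{q_n}} = S_{\frac{a_n}{q_n}} K_n^m$. I would prove it by the same computation as in lemma \ref{diagcomutrotisom}, with $n+1$ replaced by $m$. An element $[i/q_n,(i+1)/q_n[$ of $\zeta_n$ is the union over $k=0,\dots,q_m/q_n-1$ of the intervals $[i/q_n+k/q_m, i/q_n+(k+1)/q_m[$. The rotation $R_{1/q_n}$ adds $q_m/q_n$ to the index of each of these $\zeta_m$-intervals. Under $K_m$, this shifts the corresponding $\eta_m$-element by $a_m/q_n$ in the circle coordinate. So we need $a_m \equiv a_n \pmod{q_n}$. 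This follows by telescoping assumption \ref{65rotisom}: $a_m-a_n=\sum_{j=n}^{m-1}(a_{j+1}-a_j)$, and each term is divisible by $q_j$, hence by $q_n$. Alternatively, I would argue by induction on $m$, combining lemma \ref{diagcomutrotisom} at rank $m-1$ with the induction hypothesis, because the restriction of an intertwining relation to a coarser invariant partition is still intertwining.

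Stability of $\eta_n^m$ under $S_{\frac{1}{q_n}}$ comes from the intertwining relation together with assumption \ref{61rotisom}. The relation shows that $\eta_n^m$ is invariant under $S_{\frac{a_n}{q_n}}$. From $a_nb_n - s_nq_n=1$ we get $S_{\frac{1}{q_n}} = (S_{\frac{a_n}{q_n}})^{b_n}$, so $\eta_n^m$ is also invariant under $S_{\frac{1}{q_n}}$, exactly as in the remark after lemma \ref{diagcomutrotisom}.

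The only real obstacle is bookkeeping. The corollary's statement has slight typos: $\zeta_n^m$ should be $\zeta_n$, and the restriction should be to $\zeta_n$, not to $\eta_n^m$. The restrictions must also be consistent across all three levels $n$, $n+1$ and $m$. Defining everything as a restriction of the single map $K_m$ handles both issues at once.
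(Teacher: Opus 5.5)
Your proof is correct. It produces the same objects as the paper, but verifies the properties by a more direct route.

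The paper defers to the previous chapter and iterates lemma \ref{diagcomutrotisom}. It composes $C_n^m = C_{m-1}^m\cdots C_{n+1}^{n+2}C_n^{n+1}$ and sets $\eta_n^m = C_n^m\eta_n$ and $K_n^m = C_n^m K_n$. The intertwining then comes from each $C_j^{j+1}$ commuting with $S_{1/q_j}$. This is the identity established just before the corollary, and it implies commutation with $S_{a_n/q_n}$, which is a power of $S_{1/q_j}$ since $q_n\mid q_j$.

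You instead take $K_n^m = K_{m|\zeta_n}$. You reduce the intertwining to the single congruence $a_m\equiv a_n \pmod{q_n}$, obtained by telescoping assumption \ref{65rotisom}.

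The two constructions coincide. From $C_m^{m+1}K_m = K_{m+1|\zeta_m}$, restricting to $\mathcal{B}(\zeta_n)$ gives $C_m^{m+1}K_{m|\zeta_n} = K_{m+1|\zeta_n}$, and induction yields $C_n^mK_n = K_{m|\zeta_n}$.

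What each route buys:
\begin{itemize}
\item In your version, monotonicity $\eta_n^m\hookrightarrow\eta_{n+1}^m$ and the compatibility $K_n^m = K^m_{n+1|\zeta_n}$ are tautological, because everything is a restriction of the one map $K_m$. In the composition picture these need a small check.
\item The paper's version produces the maps $C_n^m:\eta_n\to\eta_n^m$ explicitly. These are used later in corollary \ref{corolisomrotisom} to define $P_n^m = B_m^{-1}C_n^mB_n$. With your definition they are simply $C_n^m = K_{m|\zeta_n}K_n^{-1}$, so nothing is lost.
\end{itemize}

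Your reading of the typos is also right: $\zeta_n^m$ should be $\zeta_n$, and the restriction should be to $\zeta_n$ rather than $\eta_n^m$.
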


\begin{proof} The proof is similar to the one found in the previous chapter.%: since $\eta_n^{n+1} \hookrightarrow \eta_{n+1}$, we can define $C_n^m = C_{m-1}^m...C_{n+1}^{n+2} C_n^{n+1}$. Let $\eta_n^m= C_n^m  \eta_n$. We have: $\eta_{n}^m \hookrightarrow  \eta _{n+1}^m$. We define $K_n^m= K_n C_n^m$. We can check that $K_n^{m} R_{\frac{1}{q_n}}=S_{\frac{a_n}{q_n}} K_n^{m}$ and $K_n^m=K_{n+1|\eta_n^m}^m$.
\end{proof}

For any $n$ fixed, the sequence of partitions $(\eta_n^{m})_{m\geq n}$ must converge when $m \rightarrow + \infty$, in order to obtain a full sequence of monotonic partitions. Moreover, the possible limit sequence (i.e. a possible $\eta_n^\infty$) must generate. Indeed, these assumptions are required to apply lemma \ref{lemmekatokrotisom}. However, we can check that none of these assumptions are satisfied, in general. Therefore, to obtain these assumptions, we pull back the partition $\eta_n^{m}$ by a suitable smooth measure-preserving diffeomorphism $B_m$. The following lemma, already proved in the previous chapter, gives the conditions that $B_m$ must satisfy:

\begin{lemma}
\label{conditionsbnrotisom}
Let $B_m \in$ Diff$^\infty(M,\mu)$. Let $A_{m+1}= B_{m+1} B_m^{-1}$.

\begin{enumerate}
\item If $A_{m+1} S_{\frac{1}{q_m}}=  S_{\frac{1}{q_m} }A_{m+1}$ and if \[ \sum_{m\geq 0} q_m \mu \left( ( I \times [0,q_m[ )\Delta A_{m+1}^{-1} R^{(m)}  \right) < + \infty \]
then for any fixed $n$, when $m \rightarrow + \infty$, the sequence of partitions $\xi_n^m= B_m^{-1} \eta_n^m$ converges. We denote $\xi_n^\infty$ the limit. The sequence $\xi_n^\infty$ is monotonous and $T_n= B_n^{-1} S_{\frac{p_n}{q_n}} B_n$ stabilizes each $\xi_n^\infty$.
\item If, moreover, the sequence $\xi_n= B_n^{-1} \eta_n$ generates, then so does $\xi_n^\infty$.
\end{enumerate}

\end{lemma}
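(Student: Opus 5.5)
The plan is to compare $\xi_n^{m+1}$ with $\xi_n^m$ element by element, show that the total symmetric difference is bounded by the $m$-th term of the series in the hypothesis, and then pass to the limit in the complete metric space of measurable partitions. Throughout I read the set $I\times[0,q_m[$ in the hypothesis as the fundamental domain $I\times[0,\frac{1}{q_m}[$ of $S_{\frac{1}{q_m}}$.

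\textbf{Step 1 (one-step comparison).} By Corollary \ref{diagitererotisom}, $\eta_n^{m+1}=C_m^{m+1}\eta_n^m$, where $C_m^{m+1}$ is extended from $\eta_m$ to unions of its elements. This extension makes sense because $\eta_n^m \hookrightarrow \eta_m$. Since $B_{m+1}^{-1}=B_m^{-1}A_{m+1}^{-1}$, we get
\[ \xi_n^{m+1}=B_m^{-1}A_{m+1}^{-1}C_m^{m+1}\eta_n^m ,\qquad \xi_n^m=B_m^{-1}\eta_n^m .\]
Because $B_m^{-1}$ preserves $\mu$, it suffices to bound, for the elements $c\in\eta_m$, the quantities $\mu\big(A_{m+1}^{-1}C_m^{m+1}c\,\Delta\, c\big)$. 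Each such element is $c=S_{\frac{j}{q_m}}(I\times[0,\frac1{q_m}[)$. Both $C_m^{m+1}$ and $A_{m+1}^{-1}$ commute with $S_{\frac1{q_m}}$: for $C_m^{m+1}$ this was checked after Lemma \ref{diagcomutrotisom} using assumption \ref{61rotisom}, and for $A_{m+1}^{-1}$ it is the hypothesis. Hence
\[ A_{m+1}^{-1}C_m^{m+1}c=S_{\frac{j}{q_m}}A_{m+1}^{-1}R^{(m)} , \]
and each of the $q_m$ symmetric differences has measure $\mu\big((I\times[0,\frac1{q_m}[)\Delta A_{m+1}^{-1}R^{(m)}\big)$. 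An element of $\eta_n^m$ is a union of elements of $\eta_m$, so summing over the elements of $\xi_n^m$ gives
\[ d(\xi_n^{m+1},\xi_n^m)\le q_m\,\mu\big((I\times[0,\tfrac1{q_m}[)\Delta A_{m+1}^{-1}R^{(m)}\big). \]
This is summable in $m$.

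\textbf{Step 2 (convergence, monotonicity, invariance).} By Step 1 the sequence $(\xi_n^m)_m$ is Cauchy, each partition having a fixed number $q_n$ of labelled elements. The measure algebra of a Lebesgue space is complete, so each labelled element converges and we obtain a limit partition $\xi_n^\infty$. Monotonicity $\xi_n^m\hookrightarrow\xi_{n+1}^m$ holds for every $m$ by Corollary \ref{diagitererotisom}. Inclusions of labelled unions pass to the limit, so $\xi_n^\infty\hookrightarrow\xi_{n+1}^\infty$.

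For invariance, fix $m\ge n$. Then $B_nB_m^{-1}=(A_m\cdots A_{n+1})^{-1}$. Each $A_k$ with $k>n$ commutes with $S_{\frac1{q_{k-1}}}$, hence with $S_{\frac{p_n}{q_n}}$, since $q_n\mid q_{k-1}$ makes the latter a power of the former. Therefore $T_nB_m^{-1}=B_m^{-1}S_{\frac{p_n}{q_n}}$. Since $\eta_n^m$ is $S_{\frac1{q_n}}$-stable, $T_n$ stabilizes $\xi_n^m$. Stability passes to the limit because $T_n$ is measure preserving and therefore continuous for the partition metric.

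\textbf{Step 3 (generation).} I expect this to be the main obstacle, since generation is not a closed condition in general. Summing Step 1 for $m\ge n$ gives $d(\xi_n^\infty,\xi_n)\le\epsilon_n$, the tail of the convergent series, so $\epsilon_n\to0$. I would use the $\sigma$-algebra formulation of generation in a Lebesgue space: the completed $\sigma$-algebra generated by $\bigcup_n\mathcal B(\xi_n)$ is everything. Take any measurable set $E$ that is a union of elements of some $\xi_k$. For each $n\ge k$, $E$ is also a union of elements of $\xi_n$, since the $\xi_n=B_n^{-1}\eta_n$ are monotonic once the $B_m$ satisfy the hypotheses (this needs a short check). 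Replacing each of those elements by the corresponding labelled element of $\xi_n^\infty$ gives a set $E_n\in\mathcal B(\xi_n^\infty)$ with $\mu(E\Delta E_n)\le\epsilon_n\to0$. So $E$ lies in the closure of $\bigcup_n\mathcal B(\xi_n^\infty)$. Because $\xi_n^\infty$ is monotonic, this union is an increasing algebra and its closure is the $\sigma$-algebra it generates. Hence that $\sigma$-algebra contains all $\mathcal B(\xi_k)$, and so is the full algebra, i.e. $\xi_n^\infty$ generates. The delicate points are the labelling and the passage from the pointwise definition of generation to the algebra formulation; the latter is standard for Lebesgue spaces.
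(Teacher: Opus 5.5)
Your Steps 1 and 2 are correct. They follow the standard Anosov--Katok argument; the paper itself gives no proof of this lemma and refers to the previous chapter. The labelled comparison through $C_m^{m+1}$, the $S_{\frac{1}{q_m}}$-equivariance of $C_m^{m+1}$ and of $A_{m+1}^{-1}$, the bound $d(\xi_n^{m+1},\xi_n^m)\le q_m\,\mu\left((I\times[0,\frac{1}{q_m}[)\,\Delta\, A_{m+1}^{-1}R^{(m)}\right)$, completeness of the measure algebra, and the identity $T_nB_m^{-1}=B_m^{-1}S_{\frac{p_n}{q_n}}$ together give convergence, monotonicity and invariance.

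The gap is in Step 3, where you use that the partitions $\xi_n=B_n^{-1}\eta_n$ are monotonic. This is false in general. Since $\xi_{n+1}=B_n^{-1}A_{n+1}^{-1}\eta_{n+1}$, the relation $\xi_n\hookrightarrow\xi_{n+1}$ is equivalent to $A_{n+1}\eta_n\hookrightarrow\eta_{n+1}$. That is, $A_{n+1}$ would have to map $I\times[0,\frac{1}{q_n}[$ \emph{exactly} onto a union of strips of $\eta_{n+1}$. The hypotheses only say it does so approximately, onto $R^{(n)}$ up to a set of small measure. In the actual construction exactness is impossible. $A_{n+1}$ is the identity near $\partial M$, and the only union of strips $I\times U$ that agrees with $I\times[0,\frac{1}{q_n}[$ near $\partial M$ is $I\times[0,\frac{1}{q_n}[$ itself, which in general is far from $R^{(n)}$. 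The non-monotonicity of $\xi_n$ is precisely why the limit partitions $\xi_n^\infty$ are introduced at all.

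Without monotonicity, your argument only shows that each element of $\xi_k$ lies within $\epsilon_k$ of $\mathcal{B}(\xi_k^\infty)$. Using $\xi_k^n\hookrightarrow\xi_n$ instead gives within $\epsilon_k$ of $\mathcal{B}(\xi_n)$. Either way the error does not shrink for a fixed set. A soft argument using only pointwise generation of the non-monotonic $\xi_n$ and $d(\xi_n,\xi_n^\infty)\to0$ cannot work. For example, on the unit square let $\xi_n^\infty$ be the vertical dyadic strips of width $2^{-n}$. Obtain $\xi_n$ by shifting labels by one on a horizontal band of height $\epsilon_n\to0$ that sweeps cyclically around the square, with $\sum\epsilon_n=\infty$. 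Then $\xi_n$ separates almost all points, while $\xi_n^\infty$ does not.

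The missing ingredient is an asymptotic approximation property of $\xi_n$ itself: for every measurable $E$, $\min_{U\in\mathcal{B}(\xi_n)}\mu(E\,\Delta\, U)\to0$ as $n\to\infty$. This is what the construction actually provides. By specification 2 of Proposition \ref{propexistencerotisom} (see Corollary \ref{corogenerationrotisom}), together with $B_{n+1}^{-1}=B_n^{-1}A_{n+1}^{-1}$, the elements of $\xi_{n+1}$ intersected with $B_{n+1}^{-1}(E_{n+1})$ have diameter at most $2^{-n}$, while $\mu(E_{n+1}^c)\to0$. Approximating $E$ from inside by a compact set and from outside by an open set then yields the property. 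With it, $E$ is within $\min_{U\in\mathcal{B}(\xi_n)}\mu(E\,\Delta\, U)+\epsilon_n\to0$ of $\mathcal{B}(\xi_n^\infty)$. The rest of your Step 3 then goes through: the closure of the increasing algebra $\bigcup_n\mathcal{B}(\xi_n^\infty)$ is the full algebra, and you can pass back to the pointwise definition of generation in a Lebesgue space.
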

%, the reason why we cannot take $M=\varmathbb{T}^1$
$C_m^{m+1}$ is not continuous in general, and $A_{m+1}$ is its differentiable approximation. Lemma \ref{conditionsbnrotisom} is the reason why we need for $M$ a manifold of dimension $d \geq 2$. Indeed, if we took $M=\varmathbb{T}^1$, we could not find a diffeomorphism $B_m$ satisfying the assumptions of this lemma, except for $a_n=1$ or $a_n=q_n-1$. The choice $a_n=1$ gives that the rotation $R_\alpha$ on the circle is isomorphic to itself. The choice $a_n=q_n-1$ gives that $R_\alpha$ is isomorphic to $R_{-\alpha}$. The existence of these two isomorphisms are consistent with the fact, mentioned in the introduction, that $R_\alpha$ and $R_\beta$ are isomorphic, with $\alpha$ irrational, if and only if $\alpha= \pm \beta$.

\bigskip
%\begin{proof}
%See \cite{katokant}.

%\end{proof}

By adding to lemma \ref{conditionsbnrotisom} the convergence of the sequence $T_n$, we obtain the required isomorphism:

\begin{corollary}
\label{corolisomrotisom}
If both conditions 1. and 2. of lemma \ref{conditionsbnrotisom} hold, and if $T_n= B_n^{-1} S_{\frac{p_n}{q_n}} B_n$ weakly converges towards an automorphism $T$, then $(\varmathbb{T}^1, R_{\beta}, Leb)$ and $(M,T, \mu)$ are metrically isomorphic.
\end{corollary}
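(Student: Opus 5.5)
The plan is to apply Lemma \ref{lemmekatokrotisom} with $M_1=\varmathbb{T}^1$, $M_2=M$, $\xi_n^{(1)}=\zeta_n$, $\xi_n^{(2)}=\xi_n^\infty$, $T_n^{(1)}=R_{\frac{p_n}{q_n}b_n}$ and $T_n^{(2)}=T_n=B_n^{-1}S_{\frac{p_n}{q_n}}B_n$. First I check the hypotheses on the partitions. $\zeta_n$ is monotonic because $q_n\mid q_{n+1}$, and it generates because $q_n\to+\infty$. By Lemma \ref{conditionsbnrotisom}, conditions 1 and 2 give that $\xi_n^\infty$ is monotonic and generating, and that $T_n$ stabilizes $\xi_n^\infty$. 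Clearly $R_{\frac{p_n}{q_n}b_n}$ stabilizes $\zeta_n$. For weak convergence, $T_n\to T$ holds by hypothesis. Also $R_{\frac{p_n}{q_n}b_n}\to R_\beta$ weakly, since $\frac{p_n}{q_n}b_n\to\beta$ mod 1 by the partial proof of Lemma \ref{lemmefondarotisom}, and rotations depend continuously on the angle in the weak topology.

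Next I define the finite isomorphisms. Since $\xi_n^m=B_m^{-1}\eta_n^m$ converges to $\xi_n^\infty$, I set $\bar K_n^m=B_m^{-1}K_n^m:\zeta_n\to\xi_n^m$ and let $\bar K_n^\infty$ be the limit. It is defined element by element: $\bar K_n^\infty(c)=\lim_m B_m^{-1}K_n^m(c)$ for $c\in\zeta_n$, where the limit is taken in the measure-algebra metric. The summability in condition 1 makes this limit exist. Measure is preserved in the limit, and disjointness persists up to null sets, so $\bar K_n^\infty$ is an isomorphism $\varmathbb{T}^1/\zeta_n\to M/\xi_n^\infty$. The compatibility $\bar K_{n+1}^\infty|_{\zeta_n}=\bar K_n^\infty$ comes from $K_n^m=K^m_{n+1}|_{\zeta_n}$ (Corollary \ref{diagitererotisom}) by passing to the limit in $m$.

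The key step is the conjugacy relation on $\zeta_n$. By Corollary \ref{diagitererotisom}, $K_n^mR_{\frac{1}{q_n}}=S_{\frac{a_n}{q_n}}K_n^m$, so $K_n^mR_{\frac{b_n}{q_n}}=S_{\frac{a_nb_n}{q_n}}K_n^m=S_{\frac{1}{q_n}}K_n^m$, using $a_nb_n\equiv1 \bmod q_n$ (assumption \ref{61rotisom}). Raising to the power $p_n$ gives $K_n^mR_{\frac{p_nb_n}{q_n}}=S_{\frac{p_n}{q_n}}K_n^m$ on $\zeta_n$.

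Next I must replace $B_m$ by $B_n$. The commutation $A_{k+1}S_{\frac{1}{q_k}}=S_{\frac{1}{q_k}}A_{k+1}$ holds for $k\ge n$. Since $q_n\mid q_k$, the map $S_{\frac{p_n}{q_n}}$ is a power of $S_{\frac{1}{q_k}}$, so $B_m=A_m\cdots A_{n+1}B_n$ gives $B_m^{-1}S_{\frac{p_n}{q_n}}B_m=B_n^{-1}S_{\frac{p_n}{q_n}}B_n=T_n$. Hence $\bar K_n^mR_{\frac{p_nb_n}{q_n}}=T_n\bar K_n^m$, and this passes to $m=\infty$ because $T_n$ is a fixed measure-preserving map and is therefore continuous on the measure algebra.

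All hypotheses of Lemma \ref{lemmekatokrotisom} then hold, and it yields the metric isomorphism between $(\varmathbb{T}^1,R_\beta,Leb)$ and $(M,T,\mu)$.

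I expect the main obstacle to be the bookkeeping at the limit. One must check that $\bar K_n^\infty$ is a genuine isomorphism of finite measure algebras, with images disjoint and covering up to measure zero. One must also check that the relation with $T_n$ persists at $m=\infty$. Both should follow from the summable convergence in Lemma \ref{conditionsbnrotisom}.
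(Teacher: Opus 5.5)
Your proposal is correct and follows the paper's argument. You derive $K_n^m R_{\frac{b_np_n}{q_n}}=S_{\frac{p_n}{q_n}}K_n^m$ from $a_nb_n\equiv 1 \bmod q_n$, transport it by $B_m^{-1}$, pass to the limit in $m$, and invoke Lemma \ref{lemmekatokrotisom}. Your limit $\lim_m B_m^{-1}K_n^m$ is exactly the paper's $\bar K_n^\infty=P_n^\infty B_n^{-1}K_n$, since $K_n^m=C_n^mK_n$ gives $B_m^{-1}K_n^m=(B_m^{-1}C_n^mB_n)B_n^{-1}K_n$. You also supply details the paper leaves implicit or defers to the previous chapter: the identity $B_m^{-1}S_{\frac{p_n}{q_n}}B_m=T_n$ from equivariance, the Cauchy estimate behind the existence of the limit, and the checks that $\zeta_n$ generates and $R_{\frac{b_np_n}{q_n}}\to R_\beta$ weakly.
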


%\begin{corollary}
%\label{corolisomrotisom}
%If all the conditions of lemma \ref{conditionsbnrotisom} obtain, and if $T_n= B_n^{-1} S_{\frac{p_n}{q_n}} B_n$ $C^\infty$-converges towards $T$, then $(\varmathbb{T}^1, R_{\beta}$ and $(M,T)$ are metrically isomorphic.

%\end{corollary}

\begin{proof}

By corollary \ref{diagitererotisom}, $K_n^{m} R_{\frac{1}{q_n}}=S_{\frac{a_n}{q_n}} K_n^{m}$. By iteration, 

\[ K_n^{m} R_{\frac{b_np_n}{q_n}}=S_{\frac{a_nb_np_n}{q_n}} K_n^{m} \]

Since $a_nb_n/q_n=1/q_n$ mod 1, then 

%$K_n^{m} T^{ \frac{\gn p_n}{q_n}}=S_{\frac{p_n}{q_n}} K_n^{m}$. 

\[ K_n^{m} R_{\frac{b_np_n}{q_n}}=S_{\frac{p_n}{q_n}} K_n^{m} \]

Therefore, the following diagram commutes:

\[ \xymatrix{
\zeta_n \ar@(ul,dl)[]_{R_{\frac{b_np_n}{q_n}} }  \ar@{->}[r]^{K_n^{m}} \ar@{^{(}->}[d]  & \eta_n^{m} \ar@(ul,ur)[]^{S_{\frac{p_n}{q_n}}}  \ar@{->}[r]^{B_m^{-1}} \ar@{^{(}->}[d]  & \xi_n^{m} \ar@(ur,dr)[]^{T_n} \ar@{^{(}->}[d]   \\
\zeta_{n+1}   \ar@{->}[r]^{K_{n+1}^{m}} & \eta_{n+1}^{m}  \ar@{->}[r]^{B_m^{-1}} & \xi_{n+1}^{m} 
 } \]
 
% \ar@(ur,dr)[]^{T_{n+1}}\ar@(ul,dl)[]_{R_{\frac{b_{n+1}p_{n+1}}{q_{n+1}}}}\ar@(dl,dr)[]_{S_{\frac{p_{n+1}}{q_{n+1}}}}
 
Let $\bar{K}_n^\infty: \zeta_n \rightarrow \xi_n^\infty $ be defined by $\bar{K}_n^\infty= P_n^\infty B_n^{-1} K_n $, where $P_n^\infty : \xi_n \rightarrow \xi_n^\infty$ is the limit isomorphism of the sequence $P_n^m=   B_m^{-1} C_n^m B_n$, defined in the same way as in the previous chapter.  

As in the proof of corollary 3.4 in the previous chapter, we can show that $\bar{K}_n^\infty R_{b_n \frac{p_n}{q_n}} = T_n \bar{K}_n^\infty$ and that $\bar{K}_{n+1|\zeta_n}^\infty= \bar{K}_n^\infty$.
This allows to apply lemma \ref{lemmekatokrotisom}, which gives the required metric isomorphism.

%\ar@(ur,dr)[]^{S_{\frac{1}{q_n}}} \ar@(ur,dr)[]^{S_{\frac{1}{q_{n+1}}}} 
%\[ \xymatrix{
%\zeta_n^m \ar@(ul,dl)[]_{T^{\frac{\gn}{q_n}}}  \ar@{->}[r]^{K_n^{m}} \ar@{^{(}->}[d]  & \eta_n^{m}  \ar@{^{(}->}[d] \ar@{->}[r]^{B_m^{-1}} &  B_m^{-1} \eta_n^{m} \ar@(ur,dr)[r]^{T_n} \ar@{^{(}->}[d] \\
%\zeta_{n+1}^m \ar@(ul,dl)[]_{T^{\frac{\gnu}{q_{n+1}}}}  \ar@{->}[r]^{K_{n+1}^{m}} & \eta_{n+1}^{m} 
% }\ar@{->}[r]^{B_m^{-1}} &  B_m^{-1} \eta_n^{m} \ar@(ur,dr)[r]^{T_n}  \]
%then $(\varmathbb{T}^h, T^\alpha)$ and $(M,T)$ are metrically isomorphic.

%We apply lemma \ref{conditionsbnrotisom}, and lemma \ref{lemmekatokrot} gives the required metric isomorphism.
\end{proof}

Let us make one remark. We consider the isomorphism between $R_{\frac{b_np_n}{q_n}}$ and 

$T_n= B_n^{-1} S_{\frac{p_n}{q_n}} B_n$, instead of the isomorphism between $R_{\frac{p_n}{q_n}}$ and $\check{T}_n= B_n^{-1} S_{\frac{a_n p_n}{q_n}} B_n$ (which seems to be a more "natural" choice), because in the latter case, we are not able to show the convergence of $\check{T}_n$ towards a smooth diffeomorphism $\check{T}$. Indeed, we have:

\begin{equation}
\label{estbidon}
d_k(\check{T}_{n+1},\check{T}_{n}) \leq \| B_{n+1} \|_k \left| \frac{p_{n+1}a_{n+1}}{q_{n+1}} - \frac{p_{n}a_{n}}{q_{n}} \right|_{\mod 1} = \| B_{n+1} \|_k  a_{n+1} \left| \frac{p_{n+1}}{q_{n+1}} - \frac{p_{n}}{q_{n}} \right| 
\end{equation}

In the next section, we show that $\| B_{n+1} \|_k \leq (b_{n+1}q_n)^{R_{\ref{r0csterotisom}}(n)}$ for some fixed sequence $R_{\ref{r0csterotisom}}(n)$ (and we are not able to improve this estimate). Estimate (\ref{estbidon}) becomes:

%Estimate (\ref{estbidon}) becomes:, element of the partition $\eta_n^{n+1}$

\[ d_k(\check{T}_{n+1},\check{T}_n) \leq  a_{n+1} (b_{n+1}q_n)^{R_{\ref{r0csterotisom}}(n)} \left| \frac{p_{n+1}}{q_{n+1}} - \frac{p_{n}}{q_{n}} \right| \]

Moreover, by assumption 1 of lemma \ref{lemmefondarotisom}, $a_{n+1}b_{n+1} \geq q_{n+1}$. Since  $\left| \frac{p_{n+1}}{q_{n+1}} - \frac{p_{n}}{q_{n}} \right| \geq \frac{1}{q_{n+1}} $, estimate (\ref{estbidon}) does not allow to show that $\check{T}_n$ is Cauchy. On the other hand, applying this reasoning to show the convergence of $T_n$ will be successful.

\bigskip

In order to construct the diffeomorphism $A_{n+1}$ with suitable estimates of its norm, we need to control the width of the connected components of $R^{(n)}$. A priori, $R^{(n)}$ consists of $q_{n+1}/q_n$ "slices" of width $1/q_{n+1}$. However, this fact does not ensure the convergence of $T_n$, because it only implies that $\|B_{n+1} \|_{j} \leq (q_{n+1})^{R(n)}$ for some fixed sequence of integers $R(n)$. In order to apply the reasoning above successfully, we need a better estimate. The following lemma shows that "slices" of $R^{(n)}$ of width $1/q_{n+1}$ stack on each other, which gives $b_{n+1}$ connected components to $R^{(n)}$, each having a width of order $1/(q_n b_{n+1})$. This will allow an estimate of the form $\|B_{n+1} \|_{j} \leq (q_n b_{n+1})^{R(n)}$, which will ensure the convergence of $T_n$, by taking $b_{n+1}$ small. %phenomenon is critical, because the former requires that the series $\sum_{n \geq 0} \| B_{n+1} \|_k \left| \frac{p_{n+1}}{q_{n+1}} - \frac{p_{n}}{q_{n}} \right|$ is convergent. We have the lemma: 

\begin{lemma}
\label{estdern}

Let \[ m_n= \frac{q_{n+1}}{q_n} -1 - b_{n+1} \left\lfloor \frac{\frac{q_{n+1}}{q_n} -1}{b_{n+1}} \right\rfloor \]

and for $0 \leq l \leq b_{n+1}-1$, let

\[ k_n(l)= \left\lfloor l a_{n+1} \frac{q_n}{q_{n+1}} \right\rfloor \]

\[ r_n(l)= la_{n+1}- \frac{q_{n+1}}{q_n} k_n(l) \]

We have: 

\[ R^{(n)}= \bigcup_{l=0}^{b_{n+1}-1}  R^{(n),l} \]

with, if $0 \leq l \leq m_n$:

\[  R^{(n),l} = I \times \left( \frac{k_n(l)}{q_n}+ \frac{r_n(l)}{q_{n+1}} + \left[ 0, \frac{\left\lfloor \frac{\frac{q_{n+1}}{q_n} -1}{b_{n+1}} \right\rfloor +1}{q_{n+1}} \right[ \right) \]

and if $m_n+1 \leq l \leq b_{n+1}-1$:

\[  R^{(n),l} = I \times \left( \frac{k_n(l)}{q_n}+ \frac{r_n(l)}{q_{n+1}} + \left[ 0, \frac{\left\lfloor \frac{\frac{q_{n+1}}{q_n} -1}{b_{n+1}} \right\rfloor }{q_{n+1}} \right[ \right)  \]

\end{lemma}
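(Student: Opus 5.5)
The plan is to compute $R^{(n)}$ directly from the definition of $K_{n+1}$ in lemma \ref{lempermutation}, and then regroup the resulting thin slices according to the residue of their index modulo $b_{n+1}$. Write $Q=q_{n+1}/q_n$, which is an integer by assumption \ref{63rotisom}. Since $K_n^{n+1}=K_{n+1|\zeta_n}$ and $[0,1/q_n[=\bigcup_{k=0}^{Q-1}[k/q_{n+1},(k+1)/q_{n+1}[$, lemma \ref{lempermutation} gives
\[ R^{(n)}=\bigcup_{k=0}^{Q-1} I\times\left[\frac{ka_{n+1}}{q_{n+1}},\frac{ka_{n+1}+1}{q_{n+1}}\right[ \]
(mod 1). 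So $R^{(n)}$ is a union of $Q$ slices of width $1/q_{n+1}$, and the task is to see which of them are adjacent.

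The key step is to use the primality assumption \ref{61rotisom} at rank $n+1$, namely $a_{n+1}b_{n+1}=1+s_{n+1}q_{n+1}$. It gives $b_{n+1}a_{n+1}/q_{n+1}\equiv 1/q_{n+1} \pmod 1$. Accordingly, write each $k\in\{0,\dots,Q-1\}$ uniquely as $k=l+jb_{n+1}$ with $0\le l\le b_{n+1}-1$ and $j\ge 0$. Then
\[ \frac{ka_{n+1}}{q_{n+1}}\equiv \frac{la_{n+1}}{q_{n+1}}+\frac{j}{q_{n+1}} \pmod 1 . \]
Hence, for fixed $l$, the slices indexed by $j=0,1,\dots,J_l$ are consecutive, and their union is the single interval
\[ I\times\left(\frac{la_{n+1}}{q_{n+1}}+\left[0,\frac{J_l+1}{q_{n+1}}\right[\right), \]
where $J_l$ is the largest $j$ with $l+jb_{n+1}\le Q-1$, that is, $J_l=\lfloor (Q-1-l)/b_{n+1}\rfloor$.

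It remains to count and to rewrite the base point. Write $Q-1=b_{n+1}F+m_n$ with $F=\lfloor (Q-1)/b_{n+1}\rfloor$ and $0\le m_n<b_{n+1}$; this is exactly the definition of $m_n$. Then $J_l+1=F+1$ when $l\le m_n$, and $J_l+1=F$ when $m_n<l\le b_{n+1}-1$. These are the two lengths in the statement. For the left endpoint, Euclidean division of $la_{n+1}$ by $Q$ gives $la_{n+1}=Qk_n(l)+r_n(l)$ with $k_n(l)=\lfloor la_{n+1}/Q\rfloor=\lfloor la_{n+1}q_n/q_{n+1}\rfloor$ and $0\le r_n(l)<Q$. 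Dividing by $q_{n+1}$ yields $la_{n+1}/q_{n+1}=k_n(l)/q_n+r_n(l)/q_{n+1}$, which is the stated form of $R^{(n),l}$.

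I expect no real obstacle; the lemma is essentially bookkeeping. The only point needing care is the one just handled in the key step: the reduction $b_{n+1}a_{n+1}\equiv 1 \pmod{q_{n+1}}$ is what makes the slices stack consecutively. One should also check that the expression in the statement is read mod 1 on $\varmathbb{T}^1$, so that an interval $R^{(n),l}$ may wrap around the circle. The case $F=0$, where some $R^{(n),l}$ are empty, is harmless.
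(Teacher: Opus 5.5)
Your proposal is correct and follows essentially the same route as the paper: Euclidean division of the slice index by $b_{n+1}$, the congruence $a_{n+1}b_{n+1}\equiv 1 \pmod{q_{n+1}}$ from assumption 1 at rank $n+1$ to stack slices with the same residue $l$, a count giving $F+1$ or $F$ slices according to whether $l\le m_n$, and a final Euclidean division of $la_{n+1}$ by $q_{n+1}/q_n$. Your closed-form count $J_l=\lfloor (Q-1-l)/b_{n+1}\rfloor$ is only a slightly more compact version of the paper's block-by-block enumeration of $\{0,\dots,Q-1\}$.
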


\begin{figure}[h]
\label{rngraphique}
\centering
\includegraphics[height=8cm]{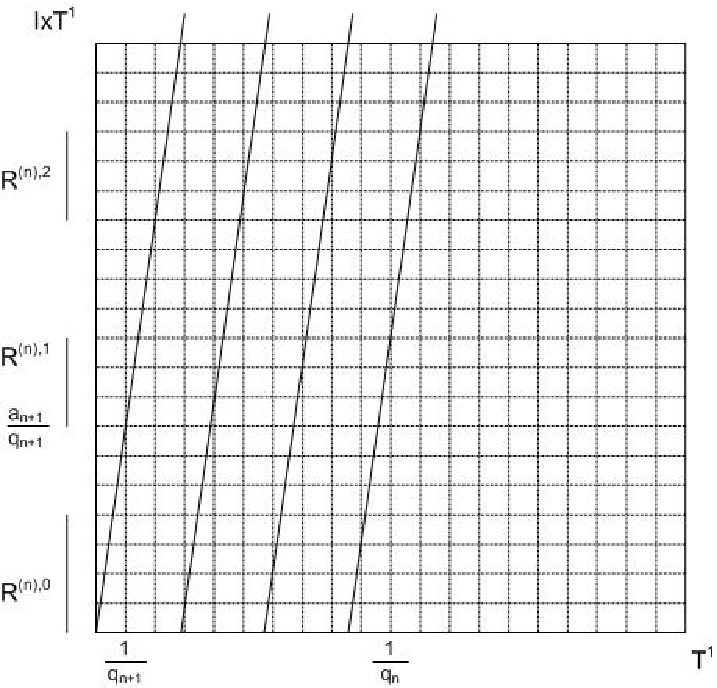}
\caption{The set $R^{(n)}=K_n^{n+1} (I \times [0,1/q_n[)$ for $q_n=2$, $q_{n+1}=20$, $a_{n+1}=7$, $b_{n+1}=3$. $R^{(n)}=R^{(n),0} \cup R^{(n),1} \cup R^{(n),2}$ has $b_{n+1}=3$ connected components. The oblique lines represent the graph of the map $x \mapsto a_{n+1} x$ from $\varmathbb{T}^1$ to itself. In this illustration, $k_n(0)=k_n(1)=0$, $k_n(2)=1$, $r_n(0)=0$, $r_n(1)=7$, $r_n(2)=4$. }
\end{figure}

%Figure 1 represents the set $R^{(n)}$. 

\begin{proof}

We have:

\[ R^{(n)}= I \times \bigcup_{i=0}^{q_{n+1}/q_n-1} \left[  \frac{a_{n+1}i}{q_{n+1}},  \frac{a_{n+1}i}{q_{n+1}} + \frac{1}{q_{n+1}}  \right[   \]

For $i=0,...,q_{n+1}/q_n-1$, we make the Euclidean division of $i$ by $b_{n+1}$. We get: \[ i= k_ib_{n+1}+r_i \]

with $0 \leq r_i \leq b_{n+1}-1$ and $0 \leq k_i \leq \left\lfloor \frac{\frac{q_{n+1}}{q_n} -1}{b_{n+1}} \right\rfloor$. Since $a_{n+1}b_{n+1}/q_{n+1}=1/q_{n+1}$ mod 1, we get:

\[ R^{(n)}=I \times \bigcup_{i=0}^{q_{n+1}/q_n-1} \left[  \frac{a_{n+1}r_i+k_i}{q_{n+1}},  \frac{a_{n+1}r_i+k_i}{q_{n+1}} + \frac{1}{q_{n+1}}  \right[   \]

Moreover, we have:

\[ \{ 0,...,q_{n+1}/q_n-1 \}= \{ 0,...,b_{n+1}-1\} \bigcup \left( b_{n+1} + \{ 0,...,b_{n+1}-1\} \right) \bigcup ...\] 

\[ ...\bigcup \left( b_{n+1} \left( \left\lfloor \left(\frac{q_{n+1}}{q_n} -1\right)/b_{n+1} \right\rfloor-1 \right) + \{ 0,...,b_{n+1}-1\} \right) \bigcup \left( b_{n+1} \left( \left\lfloor \left(\frac{q_{n+1}}{q_n} -1\right)/b_{n+1} \right\rfloor \right)+ \{ 0,...,m_n \} \right)  \]

Therefore,

\[ R^{(n)}= I \times \bigcup_{k_i=0}^{\left\lfloor (\frac{q_{n+1}}{q_n} -1)/b_{n+1} \right\rfloor-1} \bigcup_{r_i=0}^{b_{n+1}-1} \left[  \frac{a_{n+1}r_i+k_i}{q_{n+1}},  \frac{a_{n+1}r_i+k_i}{q_{n+1}} + \frac{1}{q_{n+1}}  \right[   \bigcup \]

\[ \bigcup_{r_i=0}^{m_n} \left[  \frac{a_{n+1}r_i+\left\lfloor (\frac{q_{n+1}}{q_n} -1)/b_{n+1} \right\rfloor}{q_{n+1}},  \frac{a_{n+1}r_i+\left\lfloor (\frac{q_{n+1}}{q_n} -1)/b_{n+1} \right\rfloor}{q_{n+1}} + \frac{1}{q_{n+1}}  \right[  \]

\[ R^{(n)}= I \times \bigcup_{l=0}^{m_n} \left( \frac{a_{n+1}l}{q_{n+1}}+ \bigcup_{k_i=0}^{\left\lfloor (\frac{q_{n+1}}{q_n} -1)/b_{n+1} \right\rfloor}  \left[  \frac{k_i}{q_{n+1}},  \frac{k_i}{q_{n+1}} + \frac{1}{q_{n+1}}  \right[   \right)  \bigcup \]

\[ \bigcup_{l=m_n+1}^{b_{n+1}-1} \left( \frac{a_{n+1}l}{q_{n+1}}+ \bigcup_{k_i=0}^{\left\lfloor (\frac{q_{n+1}}{q_n} -1)/b_{n+1} \right\rfloor-1} \left[  \frac{k_i}{q_{n+1}},  \frac{k_i}{q_{n+1}} + \frac{1}{q_{n+1}}  \right[ \right)  \]

\[ R^{(n)}= I \times \bigcup_{l=0}^{m_n} \left( \frac{a_{n+1}l}{q_{n+1}}+  \left[  0,  \frac{\left\lfloor (\frac{q_{n+1}}{q_n} -1)/b_{n+1} \right\rfloor+1}{q_{n+1}}  \right[   \right) \bigcup_{l=m_n+1}^{b_{n+1}-1} \left( \frac{a_{n+1}l}{q_{n+1}}+ \left[ 0,  \frac{\left\lfloor (\frac{q_{n+1}}{q_n} -1)/b_{n+1} \right\rfloor}{q_{n+1}}  \right[ \right)  \]

Finally, the Euclidean division of $la_{n+1}$ by $q_{n+1}/q_n$ gives: 

\[ la_{n+1}= k_n(l)  q_{n+1}/q_n + r_n(l) \]

with 
\[ k_n(l)= \left\lfloor l a_{n+1} \frac{q_n}{q_{n+1}} \right\rfloor \]

\[ r_n(l)= la_{n+1}- \frac{q_{n+1}}{q_n} \left\lfloor l a_{n+1} \frac{q_n}{q_{n+1}} \right\rfloor \]

We get: 

\[ R^{(n)}=I \times  \bigcup_{l=0}^{m_n} \frac{k_n(l)}{q_n}+ \frac{r_n(l)}{q_{n+1}} + \left[ 0, \frac{\left\lfloor \frac{\frac{q_{n+1}}{q_n} -1}{b_{n+1}} \right\rfloor +1}{q_{n+1}} \right[  \bigcup_{l= m_n+1}^{b_{n+1}-1} \frac{k_n(l)}{q_n}+ \frac{r_n(l)}{q_{n+1}} + \left[ 0, \frac{\left\lfloor \frac{\frac{q_{n+1}}{q_n} -1}{b_{n+1}} \right\rfloor }{q_{n+1}} \right[  \]

\end{proof}

\bigskip

The next section is dedicated to the construction of the sequence of diffeomorphisms $B_n$ satisfying the conditions of lemma \ref{conditionsbnrotisom}.

\section{The sequence of conjugacies}

In this section, we construct a sequence of diffeomorphisms $B_n$ on $M$ satisfying the conditions of lemma \ref{conditionsbnrotisom}, such that $ \|B_{n} \|_{n} \leq (q_{n-1} b_{n})^{R_{\ref{rdebnrotisom}}(n-1)}$ for some $R_{\ref{rdebnrotisom}}(n)$, and such that $B_n=Id$ on a neighborhood of $\ddd M$, in order to ensure that $(D^jT)_{|\ddd M}= (D^j S_\alpha)_{|\ddd M}$ for any $j \in \varmathbb{N}$.%  conjugating the rotation $S_{\frac{p_n}{q_n}}$ to $T_n$. $B_n= A_n...A_1$ must satisfy the following specifications:

\begin{proposition}
\label{propexistencerotisom}
There exists a sequence of diffeomorphisms $B_n \in$ Diff$^\infty(M,\mu)$ such that $B_n$ and $A_{n+1}= B_{n+1} B_n^{-1}$ satisfy the following conditions:

\begin{enumerate}

\item (convergence of the partition $\xi_n^m$ to $\xi_n^\infty$) \[ \sum_{ m\geq 0} q_m \mu \left( \Delta_{0,q_m} \Delta A_{m+1}^{-1} R^{(m)}  \right) < + \infty \]
\item (generation) There is a set $E_{n+1} \subset M$ such that $\sum_{n \geq 0} \mu(E_{n+1}^c)< + \infty$ and such that  \[ diam\left( A_{n+1}^{-1} \left( \Delta_{0,q_{n+1}} \bigcap E_{n+1} \right)  \right) \leq \frac{1}{2^n \|B_n\|_1} \] 
\item (equivariance) \[ A_{n+1} S_{\frac{1}{q_n}}=  S_{\frac{1}{q_n} }A_{n+1} \]
\item (polynomial estimation) There is a fixed sequence $\rrr(n) \label{rdebnrotisom} \in \varmathbb{N}$ such that \[ \|A_{n+1} \|_{n+1} \leq (q_n b_{n+1})^{R_{\ref{rdebnrotisom}}(n)} \]
\item (identity on a neighborhood of the boundary) $B_n=Id$ on a neighborhood of $\ddd M$.
\end{enumerate}

\end{proposition}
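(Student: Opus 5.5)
We construct the sequence inductively by setting $B_0=Id$ and $B_{n+1}=A_{n+1}B_n$, so everything reduces to building a single diffeomorphism $A_{n+1}$ once $q_n,q_{n+1},a_{n+1},b_{n+1}$ and $\|B_n\|_1$ are fixed. We read $\Delta_{0,q}$ as the fundamental strip $I\times[0,1/q[$. The guiding idea is that $A_{n+1}$ should be a smooth, measure-preserving, $S_{\frac{1}{q_n}}$-equivariant approximation of the combinatorial map $C_n^{n+1}$ on a single fundamental strip. By Lemma \ref{estdern}, $R^{(n)}$ is a union of $b_{n+1}$ vertical bands $R^{(n),l}$. Each band has width of order $1/(q_nb_{n+1})$, and their total width is $1/q_n$. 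We therefore define $A_{n+1}^{-1}$ first on the strip $\Delta_{0,q_n}=I\times[0,1/q_n[$. There it should be a diffeomorphism supported away from $\ddd(\Delta_{0,q_n})$ that sends most of each band $R^{(n),l}$ (after a translation in the $s$-direction bringing it near $[0,1/q_n[$) onto a piece of $\Delta_{0,q_n}$, up to a small error set. We then extend it to $M$ by the relation $A_{n+1}S_{\frac{1}{q_n}}=S_{\frac{1}{q_n}}A_{n+1}$, which gives condition 3 automatically. Because the construction lives in the interior of $I$ in the $x$-direction and near the strip, we also get $A_{n+1}=Id$ near $\ddd M$, and hence condition 5 by induction.

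Concretely, we use the standard toolkit from the previous chapter (following \cite{windsor07}). We cut $I\times[0,1/q_n[$ horizontally into $b_{n+1}$ rectangles $I_l\times[0,1/q_n[$ of equal measure, where the $I_l$ are slabs in the first coordinate. We then use a measure-preserving diffeomorphism, close to a ``rotation by $\pi/2$'' of a rectangle in the $(x_1,s)$ plane, to exchange each horizontal slab $I_l\times[0,1/q_n[$ with the vertical band $R^{(n),l}$, translated back to $[0,1/q_n[$ by $-k_n(l)/q_n-r_n(l)/q_{n+1}$. Such maps are built from a fixed smooth area-preserving model map $\phi_\delta$ on the unit square that equals the quarter turn outside a $\delta$-neighborhood of the boundary and the identity near the boundary. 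These model maps are conjugated by linear rescalings with ratios of order $q_nb_{n+1}$, tensored with the identity in the remaining $d-2$ coordinates, and composed with translations. Since all rescaling factors are polynomial in $q_nb_{n+1}$, and each derivative of order $j\le n+1$ multiplies by at most a power of the scaling, we obtain $\|A_{n+1}\|_{n+1}\le C(n,\delta_n)(q_nb_{n+1})^{c(n)}$. After choosing $\delta_n$ depending only on $n$, this is absorbed into $(q_nb_{n+1})^{R_{\ref{rdebnrotisom}}(n)}$, which is condition 4. The image of $\Delta_{0,q_n}$ under $A_{n+1}^{-1}$ differs from $R^{(n)}$ only on the $\delta_n$-collar sets, which have measure $O(\delta_n/q_n)$. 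Taking $\delta_n=2^{-n}$ yields condition 1 after multiplying by $q_n$ and summing. Note that condition 1 concerns $\Delta_{0,q_m}$ versus $A_{m+1}^{-1}R^{(m)}$, so we may alternatively define $A_{m+1}$ as the inverse of this band-exchange map; we fix whichever orientation matches.

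For generation (condition 2), we insert an additional fine ``chopping'' step. Inside each model piece we compose with a further measure-preserving map that, away from an exceptional set $E_{n+1}^c$ of measure at most $2^{-n}$, sends the thin strip $\Delta_{0,q_{n+1}}$ to a set of diameter at most $1/(2^n\|B_n\|_1)$. This is done by cutting the thin strip into small cubes and arranging, via the quarter-turn model maps at a finer scale, that each such cube is the image of a small cube. The scale needed is dictated by $\|B_n\|_1$, which is already fixed at this stage, and by $2^n$. So the extra rescaling is by a factor polynomial in $\|B_n\|_1\le(q_{n-1}b_n)^{R(n-1)}\le q_n^{R(n-1)}$, which is still bounded by a power of $q_nb_{n+1}$. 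This is the step we expect to be the main obstacle: it must be reconciled simultaneously with the equivariance under $S_{\frac{1}{q_n}}$ and with keeping the norm polynomial in $q_nb_{n+1}$ rather than in $q_{n+1}$. The width $1/q_{n+1}$ of $\Delta_{0,q_{n+1}}$ must not enter the derivative estimates. To achieve this, we will perform the chopping only in the $x$-directions (of scale independent of $q_{n+1}$) and use the stacking of Lemma \ref{estdern} so that the $s$-extent of the relevant pieces is controlled by $1/(q_nb_{n+1})$.
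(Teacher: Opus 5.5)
Your treatment of conditions 1, 3, 4 and 5 is essentially the paper's first two steps: $A^1_{n+1}$, an $s$-independent shear moving the $x_1$-slabs by $k_n(l)/q_n$, followed by $A^2_{n+1}$, the quasi-rotation rescaled by $q_n$. There are two minor slips. The slabs should have $x_1$-widths $(r_n(l_{i+1})-r_n(l_i))q_n/q_{n+1}$, proportional to the unequal widths $f_n(l)/q_{n+1}$ of the components of $R^{(n)}$, rather than equal widths. And the back-translation is by $-k_n(l)/q_n$ only; also subtracting $r_n(l)/q_{n+1}$ would pile all bands onto the bottom of $[0,1/q_n[$.

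The genuine gap is condition 2, which is exactly where the paper needs a new idea, and the mechanism you sketch does not work. First, making each small cube of a strip the image of a small cube gives no bound on the diameter of the whole set $A_{n+1}^{-1}(\Delta_{i,q_{n+1}}\cap E_{n+1})$, which is what is required. Second, controlling the $s$-extent by the width $\approx 1/(q_nb_{n+1})$ of a component of $R^{(n)}$ is too weak. The inverse band exchange turns an $s$-length $h$ into an $x_1$-length $q_nh$, so you end up with $x_1$-extent $\approx 1/b_{n+1}$. This does not tend to $0$ when $b_{n+1}$ stays bounded (e.g.\ $b_n=b_0$ in the rationally dependent case), which is precisely why the paper says it cannot proceed as in the previous chapter. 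Third, a map acting only on the $x$-coordinates and placed on the $R^{(n)}$ side sends each strip $[0,1]^{d-1}\times[i/q_{n+1},(i+1)/q_{n+1}[$ onto itself, so it shrinks nothing.

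What is missing is Lemma \ref{lemme3} and its corollary. The paper cuts each component of $R^{(n)}$ into sub-bands of height $w/q_{n+1}$, with $q_{n+1}/w$ of order $2^{n+1}q_n^2\|B_n\|_1$. This scale is independent of $q_{n+1}$, yet much finer than $1/(q_nb_{n+1})$ when $b_{n+1}$ is small. On each sub-band it applies the rescaled quasi-rotation $\phi_{n,q_{n+1}/w}$, which mixes $x$ and $s$. Equivariance is automatic, since the map is defined on the fundamental domain $R^{(n)}$ and extended by $S_{\frac{1}{q_n}}$-periodicity. Its $C^{n+1}$-norm is at most $(q_{n+1}/w)^{R(n)}$, which is bounded by a power of $b_{n+1}q_n$. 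Its inverse sends each strip, off a small set, into a box of size about $1/w\times w/q_{n+1}$. The side $w/q_{n+1}$ is chosen to absorb the factor $q_n$ lost under $(A^2_{n+1})^{-1}$. The side $1/w$ is small only because assumption 4 of Lemma \ref{lemmefondarotisom} forces $q_{n+1}$ to be at least of order $(2^{n+1}q_n^{3/2}\|B_n\|_1)^2$. Your proposal never uses this growth of $q_{n+1}$, and it is indispensable. For $d\ge 3$ the construction is iterated in the planes $(x_i,x_d)$ with nested scales $w_1,w_1w_2,\dots$ (Lemma \ref{lemme3gal}).

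Your idea of chopping only in the $x$-directions can be rescued, but only if the chopping map $G$ is placed before the band exchange, $A_{n+1}=A^2_{n+1}A^1_{n+1}G$. Then $G$ commutes with every $S_t$ and preserves $\Delta_{0,q_n}$. The inverse band exchange already sends each strip into an $x_1$-slab of width $q_n/q_{n+1}$ and $s$-height at most $1/q_n$. Anisotropic quarter turns in coordinate planes of the $x$-variables, with box widths of order $1/(2^n\|B_n\|_1)$, then shrink the remaining directions, again using that $q_{n+1}$ is large. In that version the relevant $s$-bound is $1/q_n$, not $1/(q_nb_{n+1})$, and it is small for a suitable exponent sequence in assumption 4.
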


\begin{remark}
Specification 2 above implies that $\xi_n$ generates (and so $\xi_n^\infty$, by lemma \ref{conditionsbnrotisom}), see \cite{katokant} and \cite{windsor07}.
\end{remark}

We construct $B_n$ recursively. We suppose that $B_n$ exists and satisfies these specifications, and we construct $A_{n+1}$.

The diffeomorphism $A_{n+1}$ is constructed in three steps, each step giving a smooth, measure-preserving, equivariant and polynomially controlled map. In the first step, lemma \ref{lemme1rotisom}, we construct a smooth map $A_{n+1}^1$ that "quasi-cuts" $I \times [0,1/q_{n}[ $ into $b_{n+1}$ vertical slices, and then rotates each slice $\Gamma_l$ by an angle $k_n(l)/q_n$ along the periodic flow $S_t$. %(remember that the parameters $0 \leq a_n(i) \leq q_n-1$ are defined by $R^{(n)}=\bigcup_{i=0}^{k_n-1} \Delta_{a_n(i)k_n+i,k_n q_{n}} $).

%the horizontal slice $ \frac{k_n(l)}{q_n}+ \frac{r_n(l)}{q_{n+1}} + \left[ 0, \frac{\left\lfloor \frac{\frac{q_{n+1}}{q_n} -1}{b_{n+1}} \right\rfloor +1}{q_{n+1}} \right[ \times I$ or $\frac{k_n(l)}{q_n}+ \frac{r_n(l)}{q_{n+1}} + \left[ 0, \frac{\left\lfloor \frac{\frac{q_{n+1}}{q_n} -1}{b_{n+1}} \right\rfloor +1}{q_{n+1}} \right[ \times I$, depending on whether $l \leq m_n$ or $l>m_n$.

In the second step, we construct a second map $A_{n+1}^2$ that "quasi-sends" each vertical slice $A_{n+1}^1(\Gamma_l)$ into a suitable connected component of $R^{(n)}$ (see lemma \ref{estdern} for the decomposition of $R^{(n)}$ into connected components). These two steps ensure that $A_{n+1}$ "quasi-sends" $I \times [0,1/q_{n}[$ to $R^{(n)}$. It ensures that $\xi_n$ converges.

In the third step, we obtain the generation of $\xi_n$. We use $A_{n+1}^3$ to quasi-rotate slices inside each connected component of $R^{(n)}$. These slices are chosen sufficiently thin to ensure that the diameter of $A_{n+1}^{-1} \left( I \times [l/q_{n+1},(l+1)/q_{n+1}[ \right)$ is small, but these slices are not too thin to ensure that $\|A_{n+1} \|_{n+1} \leq (b_{n+1} q_n)^{R_{\ref{rdebnrotisom}}(n)}$, which enables the convergence of $T_n$. This last step completes the construction.

\bigskip

%For the sake of readability, we first write the construction in the case $M= [0,1] \times \varmathbb{T}$, and then we extend it to $M= [0,1]^{d-1} \times \varmathbb{T}$.

Let $l_0,...,l_{b_{n+1}-1}$ integers such that $0 = r_n(l_0)<...<r_n(l_{b_{n+1}-1}) \leq q_{n+1}/q_n-1$. Let $l_{b_{n+1}}=b_{n+1}$ and $r_n(l_{b_{n+1}})=q_{n+1}/q_n$.

\bigskip

%For $i=0,...,b_{n+1}-1$,

\subsection{Construction in dimension 2}

We suppose $M= [0,1] \times \varmathbb{T}^1$. The first step is based on the following lemma, which analogous is found in the previous chapter:% (see figure \ref{step1fig}):

\begin{lemma}
\label{lemme1rotisom}
Let $\frac{1}{b_{n+1}} > \epsilon_1>0$, and $ \G_i= \left[r_n(l_i)\frac{q_n}{q_{n+1}},r_n(l_{i+1})\frac{q_n}{q_{n+1}} -\epsilon_1 \right] \times \left[0, \frac{1}{q_n} \right[ $ for $0 \leq i \leq b_{n+1}-1$. There is a smooth measure-preserving diffeomorphism $A_{n+1}^1$ of $[0,1] \times \varmathbb{T}^1$ such that:

\begin{enumerate}
\item \[ A_{n+1}^1 S_{\frac{1}{q_n}} = S_{\frac{1}{q_n}} A_{n+1}^1 \]
\item \[ A_{n+1}^1 (\G_i)= S_{\frac{k_n(l_i)}{q_n}} \G_i \]
\item \[  \| A_{n+1}^1 \|_j \leq \frac{1}{\epsilon_1^j} \| \phi \|_j    \]
where $\phi$ is a fixed smooth diffeomorphism, independent of $n$ and $\epsilon_1$.
\end{enumerate}

\end{lemma}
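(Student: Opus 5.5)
The plan is to realize $A_{n+1}^1$ as the time-one map of a Hamiltonian flow (area is the measure in dimension 2), generated by a Hamiltonian of the form $H(x,s)=h(x)$ depending only on the radial coordinate $x\in[0,1]$. Such a map has the form
\[ A_{n+1}^1(x,s)=\bigl(x,\ s+\psi(x) \mod 1\bigr),\qquad \psi=h', \]
so it is automatically area-preserving. It also commutes with every $S_t$, in particular with $S_{\frac{1}{q_n}}$, which gives property 1 for free.

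For property 2, I will choose $\psi$ smooth on $[0,1]$ so that, on each interval
\[ J_i=\left[r_n(l_i)\tfrac{q_n}{q_{n+1}},\ r_n(l_{i+1})\tfrac{q_n}{q_{n+1}}-\epsilon_1\right], \]
$\psi$ is constantly equal to $k_n(l_i)/q_n$. On the gap of width $\epsilon_1$ between consecutive intervals, $\psi$ interpolates between successive constants. Then $A_{n+1}^1(\G_i)=J_i\times\bigl(\tfrac{k_n(l_i)}{q_n}+[0,\tfrac1{q_n}[\bigr)=S_{\frac{k_n(l_i)}{q_n}}\G_i$.

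To avoid large jumps in $\psi$ (the values $k_n(l_i)/q_n$ are in $[0,1[$, but may differ by almost $1$), I will work modulo $1$: only $\psi \mod 1$ matters for the map. So I can always choose the interpolated increment in $]-1,1[$, or simply increasing by the representative in $[0,1[$; the total variation of $\psi$ then stays unbounded, but the derivatives do not.

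Concretely, I fix once and for all a smooth monotone function $\rho:\mathbb{R}\to[0,1]$, equal to $0$ on $]-\infty,0]$ and to $1$ on $[1,+\infty[$. On the gap starting at $x_i=r_n(l_{i+1})\frac{q_n}{q_{n+1}}-\epsilon_1$, I set
\[ \psi(x)=\frac{k_n(l_i)}{q_n}+\delta_i\,\rho\!\left(\frac{x-x_i}{\epsilon_1}\right), \]
where $\delta_i\in[0,1[$ is the increment modulo $1$. Then $\psi^{(j)}$ is bounded by $\|\rho\|_{C^j}/\epsilon_1^{j}$.

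Near $x=0$ and $x=1$ nothing special is required by the statement. The only point needing care is the last interval: $J_{b_{n+1}-1}$ ends at $1-\epsilon_1$, and $\psi$ should be extended smoothly to $[1-\epsilon_1,1]$. I will take it constant there.

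For property 3, I compute $DA_{n+1}^1=\begin{pmatrix}1&0\\ \psi'&1\end{pmatrix}$. Higher derivatives are given by $\psi^{(j)}$, and the inverse map is $(x,s)\mapsto(x,s-\psi(x))$, which has the same bounds. Hence $\|A_{n+1}^1\|_j\le C_j\epsilon_1^{-j}$, with $C_j$ depending only on $\rho$. This is the claimed estimate, with $\phi$ the fixed map built from $\rho$, for instance the shear $(x,s)\mapsto(x,s+\rho(x))$, possibly after adjusting constants since $\epsilon_1<1$.

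The only mild obstacle is making the constant independent of $n$: the number of gaps $b_{n+1}$ grows, but the gaps are disjoint, so the sup norms do not accumulate. The bound therefore depends only on $\rho$ and $\epsilon_1$. Smoothness at the boundary is trivial since $\psi$ is smooth up to $x=0,1$.
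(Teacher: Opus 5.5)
Your proposal is correct and is essentially the construction the paper intends: it defers the proof to the previous chapter, but it describes $A_{n+1}^1$ as quasi-cutting $I\times[0,1/q_n[$ into vertical slices and rotating each one along $S_t$, and the bound $\epsilon_1^{-j}\|\phi\|_j$ comes from rescaling a fixed transition profile to the disjoint gaps of width $\epsilon_1$, exactly as in your shear $(x,s)\mapsto(x,s+\psi(x))$. One small refinement: instead of keeping $\psi$ constant on $[1-\epsilon_1,1]$, use that last gap to bring $\psi$ back to $0 \bmod 1$ ($\psi$ already vanishes near $x=0$, since $l_0=0$ and $k_n(0)=0$), so that $A_{n+1}^1$, and hence $A_{n+1}$, is the identity near $\partial M$ as item 5 of Proposition \ref{propexistencerotisom} later requires; this is presumably why $\G_{b_{n+1}-1}$ is truncated at $1-\epsilon_1$.
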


%\begin{figure}
%\centering
%\includegraphics[height=8cm]{step1new}
%\caption{The image of the partition $\eta_n$ after step 1.}
%\label{step1fig}
%\end{figure}

We take $\epsilon_1=\frac{1}{ 2^n b_{n+1}}$ and we let: \[ E_{n+1}^1=   \bigcup_{i=0}^{b_{n+1}-1} \left[ r(l_i) \frac{q_n}{q_{n+1}}, r(l_{i+1}) \frac{q_n}{q_{n+1}} - \epsilon_1 \right] \times \varmathbb{T}^1 \]

We have: \[ \mu \left( (E_{n+1}^{1})^c \right) = b_{n+1} \epsilon_1 = \frac{1}{2^n}  \]

\bigskip

In the second step, we shrink $A_{n+1}^1(\G_i)$ horizontally by a factor $q_n$, we expand it vertically by the same factor, and we rotate it by a $\pi/2$ angle (except in a neighborhood of the border of $I \times[0,1/q_n[$). Thus, $A_{n+1}^1(\G_i)$ is quasi-sent to a connected component of  $R^{(n)}$.% Note that with this operation, we automatically obtain the right combinatorics of $R^{(n)}$. 

%This step is slightly different than in Anosov-Katok's original paper, but this difference is critical: following Anosov-Katok's original method, we would need to quasi-permute $k_n q_n$ slices, in order to match the location of $R^{(n)}$. This would require at least $q_n$ iterations, thus jeopardizing the polynomial estimation in $q_n$, and ultimately the obtention of all Liouville numbers on the $h^{th}$ coordinate. 

We have the lemma (see the previous chapter):

\begin{lemma}
\label{lemme2rotisom}
For $0 \leq i \leq b_{n+1}-1$, let $\G'_i= \left[r_n(l_i)\frac{q_n}{q_{n+1}},r_n(l_{i+1})\frac{q_n}{q_{n+1}} \right] \times [0, \frac{1}{q_n}]$. There exists a smooth measure-preserving diffeomorphism $ A_{n+1}^2$ of $[0,1] \times \varmathbb{T}^1$, equivariant by $S_{\frac{1}{q_n}} $ and a measurable set $E_{n+1}^2$ that is globally invariant by $S_{\frac{1}{q_n}} $ and  $ A_{n+1}^2$ such that: \[   A_{n+1}^2  \left( \G'_i \bigcap E_{n+1}^2 \right)=  I \times  \left[\frac{r_n(l_i)}{q_{n+1}},\frac{r_n(l_{i+1})}{q_{n+1}} \right] \bigcap E_{n+1}^2  \]

Moreover, there is an explicit function $R_2(j)$, depending only on $j$, such that: \[ \| A_{n+1}^2 \|_j \leq (q_n )^{R_2(j)} \] %]|\phi_n \|_j  \]

and such that if $\Gamma \subset M$ with $ diam (\Gamma) \leq x$, then: \[ diam \left( (A_{n+1}^2)^{-1}  \left( \Gamma \bigcap E_{n+1}^2 \right) \right) \leq q_n x \]

\end{lemma}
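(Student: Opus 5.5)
We build $A_{n+1}^2$ from a single fixed model map $\psi$ conjugated by an affine rescaling. The statement is invariant under $S_{\frac{1}{q_n}}$, so it suffices to work on the fundamental domain $[0,1]\times[0,\frac{1}{q_n}]$ and extend equivariantly.

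\textbf{Rescaling.} Let $h_n(x,s)=(x,q_n s)$. It maps $[0,1]\times[0,\frac{1}{q_n}]$ onto the unit square $[0,1]\times[0,1]$ and multiplies area by $q_n$. In these coordinates:
\begin{itemize}
\item $\G'_i$ becomes the vertical strip $[x_i,x_{i+1}]\times[0,1]$, where $x_i=r_n(l_i)\frac{q_n}{q_{n+1}}$;
\item the target $I\times[\frac{r_n(l_i)}{q_{n+1}},\frac{r_n(l_{i+1})}{q_{n+1}}]$ becomes the horizontal strip $[0,1]\times[x_i,x_{i+1}]$.
\end{itemize}
The elementary transformation is therefore the rotation by $\pi/2$ of the square about its center, composed with a reflection if needed to match orientation. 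This rotation maps every vertical strip $[x_i,x_{i+1}]\times[0,1]$ to the horizontal strip at the same abscissae, exactly as required. It is area-preserving but not the identity near the boundary of the square.

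\textbf{Smoothing.} Fix once and for all a smooth area-preserving diffeomorphism $\psi$ of $[0,1]^2$ with two properties: it coincides with the rotation $\rho$ by $\pi/2$ on $[\delta,1-\delta]^2$ for some fixed small $\delta$, and it equals the identity near $\ddd[0,1]^2$. Such a $\psi$ exists because $\rho$ is the time-one map of the Hamiltonian flow of $H(x,s)=\frac{\pi}{2}\left((x-\frac12)^2+(s-\frac12)^2\right)$. Multiplying $H$ by a cutoff function equal to $1$ on the disc containing $[\delta,1-\delta]^2$ and vanishing near the boundary gives a flow whose time-one map is $\psi$.

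\textbf{Definition of $A_{n+1}^2$ and the set $E_{n+1}^2$.} Set $A_{n+1}^2=h_n^{-1}\psi h_n$ on $[0,1]\times[0,\frac{1}{q_n}]$ and extend by $S_{\frac{j}{q_n}}$-equivariance. Since $\psi$ is the identity near the boundary, the pieces glue smoothly. The map preserves $\mu$, because $h_n$ scales area by the constant factor $q_n$ and $\psi$ preserves area. Let
\[ E_{n+1}^2=\bigcup_{j} S_{\frac{j}{q_n}} h_n^{-1}\left([\delta,1-\delta]^2\right). \]
This set is $S_{\frac1{q_n}}$-invariant and $\rho$-invariant, hence $A_{n+1}^2$-invariant. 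On it, $A_{n+1}^2$ acts as the conjugated rotation, which gives the identity $A_{n+1}^2(\G'_i\cap E_{n+1}^2)=I\times[\frac{r_n(l_i)}{q_{n+1}},\frac{r_n(l_{i+1})}{q_{n+1}}]\cap E_{n+1}^2$.

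\textbf{Estimates.} By the chain rule, each derivative of $h_n$ or $h_n^{-1}$ costs at most a factor $q_n$. Hence $\|A_{n+1}^2\|_j\le C_j q_n^{j+1}\|\psi\|_j$, and absorbing the constants gives $\|A_{n+1}^2\|_j\le q_n^{R_2(j)}$. For the diameter bound, on $E_{n+1}^2$ the inverse map is $h_n^{-1}\rho^{-1}h_n$, a piecewise-affine isometry up to the stretch $h_n$. It expands distances by at most $q_n$, which gives $diam\left((A_{n+1}^2)^{-1}(\Gamma\cap E_{n+1}^2)\right)\le q_n x$.

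\textbf{Main obstacle.} The delicate point is the measure of $(E_{n+1}^2)^c$, since it equals the fixed proportion $1-(1-2\delta)^2$ of $M$ and does not tend to zero. If later steps need $\sum\mu((E^2_{n+1})^c)<\infty$, one would let $\delta=\delta_n=2^{-n}$. The cost is a factor $\delta_n^{-R(j)}$ in the norm, which is still polynomial in $q_nb_{n+1}$ as required by Proposition \ref{propexistencerotisom}. The statement as written, however, only requires a fixed $\psi$.
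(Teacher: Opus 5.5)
Your skeleton matches what the paper describes: shrink by $q_n$ in one direction, stretch by $q_n$ in the other, rotate by $\pi/2$ except near the border, extend by $S_{\frac{1}{q_n}}$-equivariance. That is essentially $C_{q_n}^{-1}\phi_nC_{q_n}$, with $\phi_n$ the quasi-rotation of Proposition \ref{propwindsorrotisom}, equal to $R_{\pi/2}$ on $[2^{-n},1-2^{-n}]^2$.

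The gap is the quasi-rotation itself. Your cut-off must equal $1$ on the disc circumscribing $[\delta,1-\delta]^2$, of radius $\sqrt2(\frac12-\delta)$, and must vanish near $\partial[0,1]^2$. This is possible only if that disc lies in the open square, i.e. $\delta>\frac{2-\sqrt2}{4}\approx 0.146$. For smaller $\delta$ the circle through the corners crosses $\partial[0,1]^2$. So your construction forces $\mu((E^2_{n+1})^c)=1-(1-2\delta)^2>\frac12$, and the remedy $\delta_n=2^{-n}$ you mention is not available by your method.

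This is the heart of the matter, not a side issue. Read literally, the statement is satisfied by $A^2_{n+1}=\mathrm{Id}$ and $E^2_{n+1}=\emptyset$. Its real content is therefore the implicit bound $\mu((E^2_{n+1})^c)=O(2^{-n})$. That bound is exactly what Corollary \ref{cgcepartioncorrotisom} uses (the estimate $\frac{8}{2^nq_n}$) and what Corollary \ref{corogenerationrotisom} uses ($\mu(E_{n+1}^c)\le 4/2^n$, summable, for generation).

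To close the gap, either quote Proposition \ref{propwindsorrotisom} from \cite{windsor07}, or replace the radial Hamiltonian by a $\rho$-invariant one with square-like level curves and constant period:
\begin{itemize}
\item Take a smooth $\rho$-invariant $N$ whose level curves are nested convex curves, circles near the centre and nearly square farther out, one of them separating $[\delta,1-\delta]^2$ from $\partial[0,1]^2$.
\item Let $A(c)$ be the area of $\{N\le c\}$ and set $H=G(N)$. The period on $\{N=c\}$ is $A'(c)/G'(c)$, so $G=\frac14 A$ makes every period equal to $4$.
\item The flow commutes with $\rho$, so $\rho$ acts on each invariant curve as the shift by a quarter period, i.e. as the time-one map.
\item Finally cut $H$ off outside the separating curve.
\end{itemize}
Since $\|\phi_n\|_j$ then grows with $n$, $R_2$ must be allowed to depend on $n$, as in Lemma \ref{lemme3}. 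This is harmless because $q_n\ge 2$.

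Two smaller corrections.
\begin{itemize}
\item $H=\frac{\pi}{2}r^2$ has angular speed $\pi$, so its time-one map is $R_\pi$; you need $\frac{\pi}{4}r^2$. No reflection is ever needed, since $\rho(x,s)=(1-s,x)$ already sends $[x_i,x_{i+1}]\times[0,1]$ onto $[0,1]\times[x_i,x_{i+1}]$.
\item The Lipschitz constant $q_n$ of $(A^2_{n+1})^{-1}$ holds only inside one box of $E^2_{n+1}$. Take two points directly on either side of the gap of width $2\delta/q_n$ between consecutive boxes: their preimages are at distance at least $1-2\delta$, which exceeds $q_n\cdot\frac{2\delta}{q_n}$ when $\delta<\frac14$. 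So the diameter estimate needs $x<2\delta/q_n$. This does hold in Corollary \ref{corogenerationrotisom} once $\delta=2^{-n}$, because $\|B_n\|_1\ge 1$.
\end{itemize}
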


%\begin{figure}[h]
%\centering
%\includegraphics[height=5cm]{avantquasirot}
%\caption{The partition $A_{n+1}^1 \eta_n \cap [0,1] \times \left[\frac{i}{q_n},\frac{i+1}{q_n}\right]$ before step 2 ($\epsilon_1$ has been taken infinitesimally small in the illustration).}
%\label{avantquasirotfig}
%\end{figure}

%\begin{figure}
%\centering
%\includegraphics[height=5cm]{apresquasirot}
%\caption{The partition $A_{n+1}^1 \eta_n \cap [0,1] \times \left[\frac{i}{q_n},\frac{i+1}{q_n}\right]$ after step 2 ($\epsilon_1$ has been taken infinitesimally small in the illustration).}
%\label{apresquasirotfig}
%\end{figure}

Combined with lemma \ref{lemme1rotisom}, lemma \ref{lemme2rotisom} gives the following corollary, which implies the convergence of the partition $\xi_n$ to $\xi_n^\infty$ (see the previous chapter):

\begin{corollary}
\label{cgcepartioncorrotisom}
We have the estimation: 

\[ \mu \left( A_{n+1}^2 A_{n+1}^1 \left(I  \times  \left[ 0,1/q_n \right[  \right)  \Delta R^{(n)}  \right)  \leq \frac{8}{2^n q_n} \]

\end{corollary}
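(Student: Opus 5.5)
The plan is to bound the symmetric difference slice by slice, using the decomposition of $R^{(n)}$ from Lemma \ref{estdern} and the exceptional sets of Lemmas \ref{lemme1rotisom} and \ref{lemme2rotisom}. Write $I\times[0,1/q_n[ = \bigcup_{i=0}^{b_{n+1}-1} \G'_i$ up to measure zero. The set $\G_i\subset \G'_i$ differs from $\G'_i$ by a strip of width $\epsilon_1$, so $\mu(\G'_i\setminus \G_i)\le \epsilon_1/q_n$. Summing over $i$, the part of $I\times[0,1/q_n[$ not covered by $\bigcup_i \G_i$ has measure at most $b_{n+1}\epsilon_1/q_n = 1/(2^n q_n)$.

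The first step applies $A_{n+1}^1$. By item 2 of Lemma \ref{lemme1rotisom}, $A_{n+1}^1(\G_i)=S_{k_n(l_i)/q_n}\G_i$. Then we apply $A_{n+1}^2$. By equivariance under $S_{1/q_n}$ (and hence under $S_{k/q_n}$), Lemma \ref{lemme2rotisom} gives
\[ A_{n+1}^2\left(S_{\frac{k_n(l_i)}{q_n}}\G'_i\cap E_{n+1}^2\right)= S_{\frac{k_n(l_i)}{q_n}}\left(I\times\left[\tfrac{r_n(l_i)}{q_{n+1}},\tfrac{r_n(l_{i+1})}{q_{n+1}}\right]\right)\cap E_{n+1}^2 , \]
using that $E_{n+1}^2$ is $S_{1/q_n}$-invariant. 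Next we compare these target strips with $R^{(n)}$. By Lemma \ref{estdern}, $R^{(n)}$ is the union of the $R^{(n),l}$, which are translates by $k_n(l)/q_n+r_n(l)/q_{n+1}$ of an interval of length $\lfloor(q_{n+1}/q_n-1)/b_{n+1}\rfloor$ or that plus one, in units of $1/q_{n+1}$.

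The key combinatorial check is that, after reordering $l_0,\dots,l_{b_{n+1}-1}$ by increasing $r_n$, consecutive differences $r_n(l_{i+1})-r_n(l_i)$ equal the lengths of the corresponding components. Equivalently, the strips $I\times[r_n(l_i)/q_{n+1}, r_n(l_{i+1})/q_{n+1}]$ translated by $k_n(l_i)/q_n$ coincide with $R^{(n),l_i}$ up to measure zero. I would verify this from $a_{n+1}b_{n+1}\equiv 1 \pmod{q_{n+1}}$: the residues $r_n(l)$, $0\le l<b_{n+1}$, are spaced so that their gaps in $\{0,\dots,q_{n+1}/q_n\}$ are exactly the component lengths, since the $R^{(n),l}$ tile a fundamental domain of $S_{1/q_n}$ after projecting modulo $1/q_n$. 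This identification is the main obstacle. If the exact match fails, I would instead accept an error of at most one slice $1/q_{n+1}$ per component, costing $b_{n+1}/q_{n+1}\le 1/(2^n q_n)$ by the choice of $c_n$ and condition \ref{66rotisom}.

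Putting it together, $A_{n+1}^2A_{n+1}^1(I\times[0,1/q_n[)\,\Delta\, R^{(n)}$ is contained in the union of three pieces:
\begin{enumerate}
\item the image of $\bigcup_i(\G'_i\setminus\G_i)$, of measure at most $1/(2^nq_n)$ since the maps preserve measure;
\item the parts of the translated $\G'_i$ outside $E_{n+1}^2$, together with their counterparts in $R^{(n)}$;
\item the slice errors above.
\end{enumerate}
For the second piece, I would use that the construction in Lemma \ref{lemme2rotisom} (as in the previous chapter) makes $E_{n+1}^2$ miss only a neighborhood of the boundary of each strip of relative measure at most $2/2^n$ within $I\times[0,1/q_n[$. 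This gives measure at most $2\cdot 2/(2^nq_n)$, counting both sides of the symmetric difference. Adding the contributions bounds the total by $8/(2^nq_n)$.
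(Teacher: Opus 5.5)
Your proposal takes the same route the paper indicates: combine Lemmas \ref{lemme1rotisom} and \ref{lemme2rotisom}, account for the $\epsilon_1$-strips and the complement of $E_{n+1}^2$, and match the translated target strips with the components $R^{(n),l_i}$ of Lemma \ref{estdern}. Your tiling argument does give $r_n(l_{i+1})-r_n(l_i)=f_n(l_i)$ exactly, so the fallback can be dropped.

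Two small repairs are needed.

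\begin{enumerate}
\item Your three pieces do not cover the part of $R^{(n)}$ that $A_{n+1}^2$ produces from $S_{k_n(l_i)/q_n}(\Gamma'_i\setminus\Gamma_i)\cap E_{n+1}^2$, since that set need not lie in $A_{n+1}^1(I\times[0,1/q_n[)$. The fix is to note that both sets have measure $1/q_n$. The symmetric difference is then twice $\mu(A_{n+1}^2A_{n+1}^1(I\times[0,1/q_n[)\setminus R^{(n)})$, which is at most $2(2^{-n}+\delta)/q_n$, where $\delta/q_n$ is the measure of $(E_{n+1}^2)^c\cap(I\times[0,1/q_n[)$.
\item Lemma \ref{lemme2rotisom} as stated gives no bound on $\delta$. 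As the paper does by citing the previous chapter, you have to read it off the construction. Your assumed $\delta\le 2/2^n$ gives $6/(2^nq_n)$, and any $\delta\le 3/2^n$ gives the stated $8/(2^nq_n)$.
\end{enumerate}
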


%We now prove lemma \ref{lemme2rotisom}:
%\begin{proof}[Proof of lemma \ref{lemme2rotisom}.]

\begin{remark}
The construction above also shows that there exists a permutation $\sigma$ of $\{ R^{(n),l}+ j/q_n, l=0,...,b_{n+1}-1,j=0,...,q_n-1 \}$ such that for any $l=0,...,b_{n+1}-1,j=0,...,q_n-1$,

\[  \mu\left(A_{n+1}^2 A_{n+1}^1 (A_{n+1}^2)^{-1} ( I \times R^{(n),l}+ j/q_n ) \Delta I \times \sigma\left( R^{(n),l}+ j/q_n \right) \right) \leq  \frac{16}{2^nq_n}   \]

This observation is generalized in \cite{nlbernoulli} and used in \cite{gausskronecker}, where conjugacies correspond to smooth approximations of permutations.
\end{remark}

In the third step, it remains to obtain the generation of the sequence of partitions $\xi_n^\infty$, without affecting the properties obtained in the first two steps. In particular, we cannot exactly proceed as in the previous chapter, because $b_{n+1}$ can be a bounded sequence (while in the previous chapter, the sequence $k_{n}$, analogous to $b_{n+1}$, is larger than $q_n$). We need to refine the approach of the previous chapter. This third step is based on the following lemma and its corollary:

\begin{lemma}
\label{lemme3}
For any integer $w \geq 1$, there is a smooth, measure-preserving, and $S_{\frac{1}{ q_n}}$-equivariant diffeomorphism $A_{n+1}^3$, and an explicit sequence of integers $\rrr(n)\label{estdeanplusuntrois}$, such that:

\[ \|A_{n+1}^3 \|_{n+1}  \leq  \left(\frac{q_{n+1}}{w}\right)^{R_{\ref{estdeanplusuntrois}}(n)} \]%\|\phi_n \|_j  \] 

and there exists a $S_{\frac{1}{ q_n}}$-invariant and $A_{n+1}^3$-invariant set $E_{n+1}^3$ such that 

$\mu( E_{n+1}^{3 c}) \leq 4/2^n$, and such that for any $i=0,...,q_{n+1}-1$, we have: 

\[  diam \left( (A_{n+1}^3)^{-1} \left(  \left[0,1 \right] \times \left[ i/q_{n+1} ,  (i+1)/q_{n+1} \right[  \right) \cap E_{n+1}^{3} \right) \leq \max \left(  \frac{1}{w}, \frac{2w}{q_{n+1}} \right)  \]

\end{lemma}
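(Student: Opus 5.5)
The plan is to build $A_{n+1}^3$ as a block-wise smooth ``quasi-rotation by $\pi/2$'' inside each fundamental domain $[0,1]\times[j/q_n,(j+1)/q_n[$ of $S_{\frac{1}{q_n}}$. The construction mirrors step 2 (lemma \ref{lemme2rotisom}), but it acts on blocks whose aspect ratio is dictated by $w$ rather than by $q_n$.

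\textbf{Blocks and the model map.} Write $N=q_{n+1}/q_n$, so the fundamental domain $[0,1]\times[0,1/q_n[$ contains exactly $N$ of the strips $[0,1]\times[i/q_{n+1},(i+1)/q_{n+1}[$. I treat first the main regime $w\le N$. Group these $N$ strips into $\lfloor N/w\rfloor$ consecutive blocks $Q_m=[0,1]\times[h_m,h_{m+1}[$. Each block contains between $w$ and $2w-1$ strips, so its height $\delta_m=h_{m+1}-h_m$ satisfies $w/q_{n+1}\le \delta_m<2w/q_{n+1}$. Fix once and for all a smooth area-preserving diffeomorphism $\psi$ of $[0,1]^2$ with two properties: $\psi$ is the identity near $\ddd [0,1]^2$, and $\psi$ coincides with the rotation by $\pi/2$ about the centre on $[\epsilon,1-\epsilon]^2$, where $\epsilon=2^{-n}$. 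One can take a fixed $\psi_0$ built with a cutoff scaled by $\epsilon$, so that $\|\psi\|_j\le 2^{nj}\|\psi_0\|_j$. For each block, let $L_m(x,s)=(x,(s-h_m)/\delta_m)$, which maps $Q_m$ onto $[0,1]^2$. Set $A_{n+1}^3=L_m^{-1}\psi L_m$ on $Q_m$. Extend to all of $M$ by $S_{\frac{1}{q_n}}$-equivariance, i.e.\ put $A_{n+1}^3 S_{\frac{j}{q_n}}=S_{\frac{j}{q_n}}A_{n+1}^3$ on the translates. Since $\psi$ is the identity near the boundary, the pieces glue smoothly. The map preserves $\mu$, because $L_m$ multiplies area by a constant and $\psi$ preserves area. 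The map is the identity near $\ddd M$.

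\textbf{Estimates.} The derivatives of $L_m^{\pm1}$ are bounded by $\delta_m^{-1}\le q_{n+1}/w$. Hence, by the chain rule,
\[ \|A_{n+1}^3\|_{n+1}\le C(n)\,2^{n(n+1)}\left(\frac{q_{n+1}}{w}\right)^{n+1}. \]
Since $q_{n+1}/w\ge q_n\ge 2$, the constant $C(n)2^{n(n+1)}$ is absorbed into a power of $q_{n+1}/w$. This yields an explicit $R_{\ref{estdeanplusuntrois}}(n)$. Let $E_{n+1}^3$ be the union over all blocks and translates of $L_m^{-1}\big([\epsilon,1-\epsilon]^2\big)$. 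This set is $S_{\frac1{q_n}}$-invariant, and it is $A_{n+1}^3$-invariant because a rotation by $\pi/2$ preserves the central square. Its complement has relative measure at most $4\epsilon$ in every block, so $\mu(E_{n+1}^{3c})\le 4/2^n$.

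\textbf{Diameter of the whole preimage.} Take a strip $[0,1]\times[i/q_{n+1},(i+1)/q_{n+1}[$. Because the blocks are unions of full strips, the strip lies inside a single block $Q_m$; this is the key point for getting a bound on the whole preimage. In the normalized coordinates it is a horizontal band $[0,1]\times[c,c+1/(q_{n+1}\delta_m)]$. Its intersection with $[\epsilon,1-\epsilon]^2$ is pulled back by the inverse rotation $\psi^{-1}$ to a vertical band of width $1/(q_{n+1}\delta_m)\le 1/w$. Mapping back by $L_m^{-1}$ gives a set of horizontal extent at most $1/w$ and vertical extent at most $\delta_m<2w/q_{n+1}$. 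So $(A_{n+1}^3)^{-1}(\text{strip})\cap E_{n+1}^3$ is contained in one rectangle of diameter at most $\max(1/w,2w/q_{n+1})$ for the max-norm, which is the claimed bound.

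\textbf{Main obstacle: the regime $w>N$.} Here a block of $w$ strips no longer fits inside one fundamental domain, and equivariance prevents rotating across domains. My first attempt would be to take a single block $Q=[0,1]\times[0,1/q_n[$ with the same quasi-rotation. The preimages are then boxes of size $(1/N)\times(1/q_n)$. The vertical size is fine, since $1/q_n=N/q_{n+1}<w/q_{n+1}$. The horizontal size $1/N$ is fine whenever $1/N\le 2w/q_{n+1}$, i.e.\ whenever $q_n\le 2w$. In the remaining sub-case $N<w<q_n/2$, I would instead cut each fundamental box horizontally as well, into $k\approx q_n/(2w)$ sub-boxes of width $1/k$ and height $1/q_n$. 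Inside each sub-box I would apply a quasi-rotation that sends horizontal segments of height $1/q_{n+1}$ into vertical pieces. This keeps the pieces of a single strip localized near one column. Checking that all the pieces of one strip still lie in a set of diameter $\le\max(1/w,2w/q_{n+1})$ in this sub-case, with norms still polynomial in $q_{n+1}/w$, is the step I expect to require the most care.
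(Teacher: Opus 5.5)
For $w\le N=q_{n+1}/q_n$ your argument proves the lemma as displayed. Its mechanism is the paper's:
\begin{itemize}
\item the quasi-rotation $\phi_n$ conjugated by the vertical dilation $C_p$, with $p=q_{n+1}/w$ or $q_{n+1}/(w+t)$;
\item the remainder absorbed into one block of fewer than $2w$ strips;
\item extension by $S_{\frac{1}{q_n}}$-equivariance;
\item the same $E^3_{n+1}$ and the same diameter count.
\end{itemize}
Your bound $\|\psi\|_j\le 2^{nj}\|\psi_0\|_j$ is unjustified but also unnecessary. The paper only uses that $\|\phi_n\|_{n+1}$ depends on $n$ alone.

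The real difference is where the blocks sit. You cut $[0,1]\times[0,1/q_n[$ at multiples of $w/q_{n+1}$. The paper instead defines $A^3_{n+1}$ on the fundamental domain $R^{(n)}$. It cuts each connected component $R^{(n),l}$ of Lemma~\ref{estdern}, which consists of $f_n(l)$ strips, according to $f_n(l)=h_n(l)w+t_n(l)$. Hence the paper's $A^3_{n+1}$ maps every component of $R^{(n)}$, and so every translate $S_{\frac{i}{q_n}}R^{(n)}$, onto itself.

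This property is not in the displayed statement, but it is what keeps the third step from undoing the first two. It gives $A_{n+1}^{-1}R^{(n)}=(A^2_{n+1}A^1_{n+1})^{-1}R^{(n)}$, so Corollary~\ref{cgcepartioncorrotisom} still yields condition 1 of Proposition~\ref{propexistencerotisom}.

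Your blocks in general straddle the points $r_n(l_i)/q_{n+1}$ (mod $1/q_n$) where consecutive components of $R^{(n)}$ meet. On such a block the quasi-rotation mixes $R^{(n)}$ with a different translate, so $\mu((A^3_{n+1})^{-1}R^{(n)}\,\Delta\,R^{(n)})$ can be of order $b_{n+1}w/q_{n+1}$. To use your map you would have to check $\sum_n q_n b_{n+1}w/q_{n+1}<\infty$ for the chosen $w$. The clean fix is to make the points $r_n(l_i)/q_{n+1}$ block boundaries, which is exactly the paper's construction.

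What your placement buys is the wider range $w\le N$, instead of $w\le\min_l f_n(l)\approx N/b_{n+1}$. The paper leaves that restriction implicit: for $w>f_n(l)$ one gets $h_n(l)=0$ and its last block is undefined.

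The regime $w>N$ that you single out needs no treatment. The paper does not handle it, and Corollary~\ref{corogenerationrotisom} only uses $w\le q_{n+1}/(2^{n+1}q_n^2\|B_n\|_1)$, far below $N$. It also cannot be handled as stated:
\begin{itemize}
\item If $\|A^3_{n+1}\|_1\le K$, equivariance makes the first coordinate $u$ of $(A^3_{n+1})^{-1}$ vary by at most $K/(2q_n)$ in the circle variable.
\item Since $u_*\mu$ is Lebesgue measure on $[0,1]$, for any interval of length $D$ at least a proportion $1-D-2K/q_n$ of each strip is sent by $(A^3_{n+1})^{-1}$ to points whose first coordinate lies outside that interval.
\item With $D=\max(1/w,2w/q_{n+1})$, these points lie in $E^{3c}_{n+1}$ by invariance, so $\mu(E^{3c}_{n+1})\ge 1-D-2K/q_n$.
\item For $w=q_{n+1}/4$ and $n\ge 4$, this gives $D=1/2$ and forces $K\ge q_n/8$. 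But $(q_{n+1}/w)^{R(n)}=4^{R(n)}$ does not depend on $q_n$.
\end{itemize}
Your single-block proposal hits exactly this obstruction: its norm is of order $q_n^{n+1}$, while $q_{n+1}/w<q_n$ there.
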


%Note that in particular, $A_{n+1}^3$ leaves invariant each $R_{i,n}^{n+1}$, thus not affecting the work done in the first two steps. If we 
%we obtain the conditions of proposition \ref{borelcantelli}, which imply generation:

We obtain the corollary:

\begin{corollary}
\label{corogenerationrotisom}
There exists an explicit sequence of integers $\rrr (n) \label{rpol3}$ depending only on $n$, there is a smooth, measure-preserving, and $S_{\frac{1}{ q_n}}$-equivariant diffeomorphism $A_{n+1}$, such that:

\[ \|A_{n+1} \|_{n+1}  \leq  \left(b_{n+1} q_n \right)^{R_{\ref{rpol3}}(n)} \] %\|\phi_n \|_{n+1}  \] 

and there exists a $S_{\frac{1}{ q_n}}$-invariant and $A_{n+1}$-invariant set $E_{n+1}$ such that $\mu( E_{n+1}^{ c}) \leq 4/2^n$, and such that for any $i=0,...,q_{n+1}-1$, we have: 

\[  diam \left( (A_{n+1})^{-1} \left(\left[0,1 \right] \times  \left[ i/q_{n+1} ,  (i+1)/q_{n+1} \right[  \right) \cap E_{n+1} \right) \leq \frac{1}{2^n \|B_n\|_1}   \]

\end{corollary}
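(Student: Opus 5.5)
We build $A_{n+1}$ as the composition $A_{n+1}=A_{n+1}^3 A_{n+1}^2 A_{n+1}^1$. Here $A_{n+1}^1$ comes from lemma \ref{lemme1rotisom} with $\epsilon_1=\frac{1}{2^n b_{n+1}}$. $A_{n+1}^2$ comes from lemma \ref{lemme2rotisom}. $A_{n+1}^3$ comes from lemma \ref{lemme3}, with an integer $w$ still to be chosen. We set $E_{n+1}=E_{n+1}^3$, or, if needed, its intersection with the images of $E_{n+1}^1,E_{n+1}^2$ that make all three sets invariant.

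All three factors are smooth, measure-preserving and $S_{\frac{1}{q_n}}$-equivariant, so $A_{n+1}$ is too. Invariance of $E_{n+1}$ under $S_{\frac{1}{q_n}}$ and under $A_{n+1}$ follows from the corresponding invariances of the $E^i_{n+1}$. The measure bound $\mu(E_{n+1}^c)\leq 4/2^n$ is the one given by lemma \ref{lemme3}. If we intersect with the other sets, the bound becomes a constant times $2^{-n}$, which is still summable and is all that is used later.

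\textbf{Norm estimate.} We bound the $C^{n+1}$ norm of the composition by the chain rule (Faà di Bruno). This gives $\|A_{n+1}\|_{n+1}\leq C_n \prod_i \|A^i_{n+1}\|_{n+1}^{n+1}$, where $C_n$ depends only on $n$. For the three factors:
\begin{itemize}
\item by lemma \ref{lemme1rotisom}, $\|A^1_{n+1}\|_{n+1}\leq (2^nb_{n+1})^{n+1}\|\phi\|_{n+1}$;
\item by lemma \ref{lemme2rotisom}, $\|A^2_{n+1}\|_{n+1}\leq q_n^{R_2(n+1)}$;
\item by lemma \ref{lemme3}, $\|A^3_{n+1}\|_{n+1}\leq (q_{n+1}/w)^{R_{\ref{estdeanplusuntrois}}(n)}$.
\end{itemize}
The key choice is therefore to take $w$ of order $q_{n+1}/(b_{n+1}q_n)$. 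Then the third bound is polynomial in $b_{n+1}q_n$. The constants $2^n$, $\|\phi\|_{n+1}$ and $C_n$ are absorbed into the exponent, since $b_{n+1}q_n\geq 2$. This yields the bound $(b_{n+1}q_n)^{R_{\ref{rpol3}}(n)}$.

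\textbf{Diameter estimate.} Lemma \ref{lemme3} gives
\[ diam\leq \max\left(\frac1w,\frac{2w}{q_{n+1}}\right)\approx \max\left(\frac{b_{n+1}q_n}{q_{n+1}},\frac{2}{b_{n+1}q_n}\right) \]
for the $(A^3_{n+1})^{-1}$-preimage of the strip intersected with $E_{n+1}$. Applying $(A^2_{n+1})^{-1}$ multiplies diameters by at most $q_n$, by lemma \ref{lemme2rotisom}. Applying $(A^1_{n+1})^{-1}$ multiplies by at most its Lipschitz constant $(2^nb_{n+1})\|\phi\|_1$.

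This is the main obstacle. The term $2/(b_{n+1}q_n)$ does not become small relative to $1/(2^n\|B_n\|_1)$, because $b_{n+1}$ may be bounded. So the choice $w\sim q_{n+1}/(b_{n+1}q_n)$ is too crude. Instead we take $w$ balanced differently. We set
\[ w=\left\lfloor 2^{n}\,\|B_n\|_1\,  q_n^2 b_{n+1}\|\phi\|_1\cdot 4\right\rfloor. \]
This makes $(2^nb_{n+1}\|\phi\|_1)\,q_n\cdot\frac1w\leq \frac{1}{2^{n+1}\|B_n\|_1}$. The other term, $\frac{2w}{q_{n+1}}$ times the same factor, is then of order $\mathrm{poly}(b_{n+1}q_n,\|B_n\|_1)/q_{n+1}$.

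Both remaining quantities are controlled by the inductive hypotheses. First, $\|B_n\|_1\leq(q_{n-1}b_n)^{R(n-1)}\leq q_n^{R(n-1)}$. Second, assumption \ref{66rotisom} of lemma \ref{lemmefondarotisom} gives $q_{n+1}\geq (b_{n+1}q_n)^{R_{\ref{r0csterotisom}}(n)}$. Choosing the explicit sequence $R_{\ref{r0csterotisom}}(n)$ large enough makes this term at most $\frac{1}{2^{n+1}\|B_n\|_1}$. This is exactly where the freedom to choose $R_{\ref{r0csterotisom}}$ is spent.

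With this $w$, we still have $q_{n+1}/w\leq q_{n+1}$. We then need to recheck the norm estimate. The factor $(q_{n+1}/w)^{R}$ is no longer automatically polynomial in $b_{n+1}q_n$, so we need a sharper reading of lemma \ref{lemme3}. The norm of $A^3_{n+1}$ comes from quasi-rotating slices of width about $1/w$ inside components of width about $1/(b_{n+1}q_n)$. So we expect a bound polynomial in $w\,b_{n+1}q_n$, which is polynomial in $b_{n+1}q_n$ since $w$ is.

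If lemma \ref{lemme3} as stated does not permit this, the fallback is to note that $w$ only enters through $\min(w,q_{n+1}/w)$, and to pick $w$ so that the norm bound is polynomial in $b_{n+1}q_n\|B_n\|_1$. This is then polynomial in $b_{n+1}q_n$ once $R_{\ref{rpol3}}(n)$ is enlarged.
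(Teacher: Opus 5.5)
\textbf{Gap: the choice of $w$.} Your final choice $w=\lfloor 4\cdot 2^n\|B_n\|_1q_n^2b_{n+1}\|\phi\|_1\rfloor$ is polynomial in $q_n$. With it, lemma \ref{lemme3} only gives $\|A^3_{n+1}\|_{n+1}\le (q_{n+1}/w)^{R(n)}$, which is of the order of a power of $q_{n+1}$. Assumption \ref{66rotisom} bounds $q_{n+1}$ only from below, so this is not $\le (b_{n+1}q_n)^{R(n)}$ for any sequence $R(n)$ fixed in advance. A bound polynomial in $q_{n+1}$ is exactly what is useless for the convergence of $T_n$, since $|p_{n+1}/q_{n+1}-p_n/q_n|\ge 1/q_{n+1}$.

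No sharper reading of lemma \ref{lemme3} helps. In its construction the pieces being quasi-rotated are the rectangles $[0,1]\times[c,c+w/q_{n+1}]$, on which $A^3_{n+1}=C_p^{-1}\phi_nC_p$ with $p=q_{n+1}/w$. Where $\phi_n=R_{\pi/2}$, this map is, up to sign and translation, $(x,y)\mapsto(py,x/p)$, so $\|A^3_{n+1}\|_1\ge q_{n+1}/w$ is forced. The $1/w$ in the diameter bound is the width of the image of one strip of height $1/q_{n+1}$, not the size of the rotated pieces. Your fallback claim that $w$ enters only through $\min(w,q_{n+1}/w)$ is also inaccurate: the norm depends on $q_{n+1}/w$ alone, and the diameter on $\max(1/w,2w/q_{n+1})$. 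The fallback points the right way, but you do not carry it out, and checking it against the diameter requirement is the actual content of the proof.

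\textbf{The fix, which is the paper's choice.} Take $w$ large: $w=\lfloor q_{n+1}/P\rfloor$ with $P=2^{n+1}q_n^2(b_nq_{n-1})^{R(n-1)}\ge 2^{n+1}q_n^2\|B_n\|_1$. You may also multiply $P$ by your Lipschitz factor $2^nb_{n+1}\|\phi\|_1$ for $(A^1_{n+1})^{-1}$; it stays polynomial in $b_{n+1}q_n$. Choose $R_{\ref{r0csterotisom}}(n)$ so that $q_{n+1}\ge P^2$. Then:
\begin{itemize}
\item $q_{n+1}/w\le 2P$, so lemma \ref{lemme3} directly gives the polynomial norm bound, using $b_nq_{n-1}\le q_n$.
\item $2w/q_{n+1}\le 2/P$ is small because of the large denominator $P$.
\item $1/w\le 2P/q_{n+1}\le 2/P$ is small because $q_{n+1}$ is huge.
\end{itemize}
Hence the $(A^3_{n+1})^{-1}$-preimage has diameter at most $2/P$. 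The factor $q_n$ from lemma \ref{lemme2rotisom}, together with your factor for $(A^1_{n+1})^{-1}$ if you included it in $P$, keeps the final diameter below $1/(2^n\|B_n\|_1)$.

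Your first attempt $w\sim q_{n+1}/(b_{n+1}q_n)$ was on the right side. The remedy for its failing term $2w/q_{n+1}$ was to divide $q_{n+1}$ by a larger polynomial, not to make $w$ itself polynomial. The freedom in assumption \ref{66rotisom} is then spent on the term $1/w$, not on $2w/q_{n+1}$.
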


\begin{figure}[h]
\label{anplusuntroisfig}
\centering
\includegraphics[height=8cm]{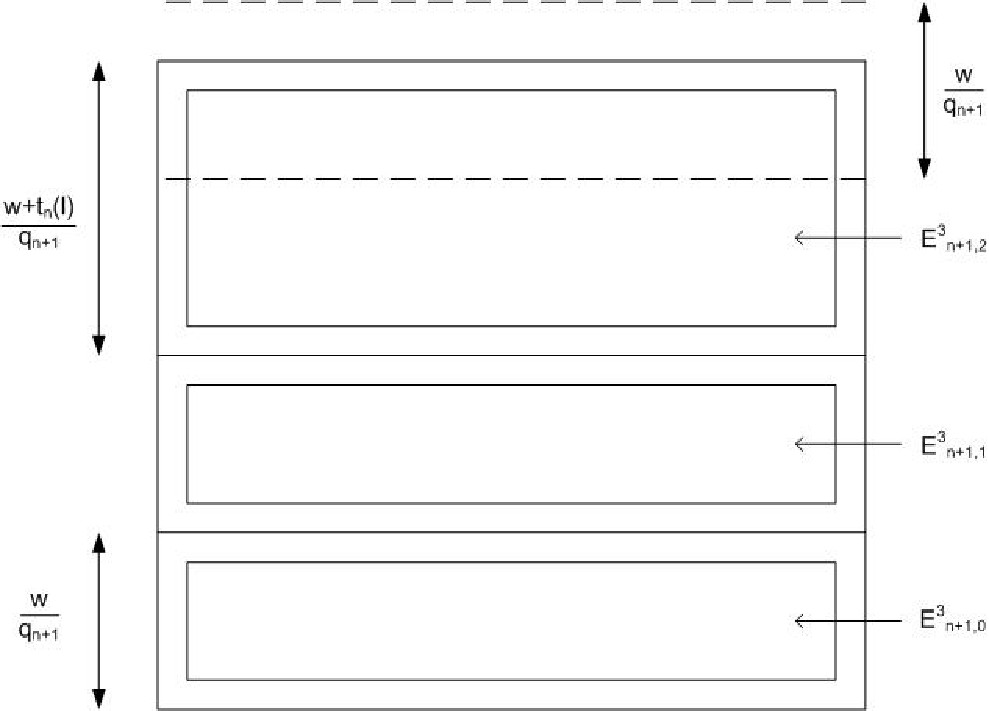}
\caption{Illustration of the third step (generation): a connected component of $R^{(n)}$ with $h_n(l)=3$.}
\end{figure}

\begin{proof}[Proof of lemma \ref{lemme3}.]

We define $A_{n+1}^3$ on $R^{(n)}$, and since $R^{(n)}$ is a fundamental domain of $S_{\frac{1}{q_n}}$, we can extend it to all $M$ by $S_{\frac{1}{q_n}}$-periodicity. To that aim, we define $A_{n+1}^3$ on each connected component of $R^{(n)}$ (see figure 16). %\ref{anplusuntroisfig}

Let $f_n(l)= \left\lfloor \frac{\frac{q_{n+1}}{q_n} -1}{b_{n+1}} \right\rfloor +1$ if $0 \leq l \leq m_n$ and $f_n(l)=\left\lfloor \frac{\frac{q_{n+1}}{q_n} -1}{b_{n+1}} \right\rfloor$ if $ m_n+1 \leq l \leq b_{n+1}-1$ ($f_n(l)/q_{n+1}$ is the width of a connected component of $R^{(n)}$, see lemma \ref{estdern}). We perform the Euclidean division of $f_n(l)$ by $w$: \[f_n(l)= h_n(l)w+t_n(l) \] with $0 \leq t_n(l) \leq w-1$.

We also need to recall the definition of a "quasi-rotation" by $\pi/2$ \cite{windsor07}:

\begin{proposition}
\label{propwindsorrotisom}
For any $n \geq 1$, there is a smooth measure-preserving map 

$\phi_n: [0,1]^2 \rightarrow [0,1]^2$ (called "quasi-rotation") such that $\phi_n = R_{\pi/2}$ on $[\frac{1}{2^n}, 1- \frac{1}{2^n}]^2$ and $\phi_n = Id$ on a neighborhood of the boundary of $[0,1]^2$.

\end{proposition}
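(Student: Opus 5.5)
Since $R_{\pi/2}$ preserves Lebesgue measure on the square, the plan is to interpolate between the identity near $\partial [0,1]^2$ and $R_{\pi/2}$ on the inner square $[\frac{1}{2^n},1-\frac{1}{2^n}]^2$ through a Hamiltonian (area-preserving) flow. Work on the centered square $[-\frac12,\frac12]^2$. The rotation $R_{\pi/2}$ is the time-$\pi/2$ map of the flow generated by the Hamiltonian $H(x,y)=\frac12(x^2+y^2)$. A first idea would be to cut $H$ off by a bump function equal to $1$ on the disk containing the inner square. This fails, because the inner square is not contained in any disk lying inside the open square. So I will instead use a Hamiltonian whose level sets are \emph{squarish} curves.

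\textbf{Step 1: a square-like invariant function.} Let $\delta=\frac{1}{2^{n+1}}$. Choose a smooth function $\rho$ on $[-\frac12,\frac12]^2$ with the following properties:
\begin{itemize}
\item $\rho$ is invariant under $R_{\pi/2}$;
\item each sublevel set $\{\rho\le c\}$ is a convex smoothed square, and the level curves are smooth closed curves around the origin;
\item $\rho = \max(|x|,|y|)$ up to a $\delta$-small smoothing near the diagonals, so that the sublevel set $\{\rho\le \frac12-\frac{1}{2^n}+\delta\}$ contains the inner square;
\item some level set $\{\rho\le\frac12-\delta/2\}$ lies compactly inside the open square.
\end{itemize}
A concrete choice is $\rho=(x^{2p}+y^{2p})^{1/2p}$ for large $p$, smoothed at the origin. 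Its level sets are $R_{\pi/2}$-symmetric, and for $p$ large they fill the square up to $\delta$.

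\textbf{Step 2: the flow.} Take $H=F(\rho)$, and let $\Phi_t$ be its Hamiltonian flow, which preserves area. Every level curve of $\rho$ is invariant under $\Phi_t$, and on a given level curve $\{\rho=c\}$ the motion is periodic with some period $\tau(c)$.

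Pick $F$ with two properties. First, $F'=0$ near the outer boundary, so that $\Phi_t=\mathrm{Id}$ there. Second, on the inner region, $F$ is chosen so that after time $1$ each inner level curve is moved exactly by a quarter of its period.

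The key point is to check that "a quarter of the period" coincides with $R_{\pi/2}$ on each invariant curve. For this, use the time-parametrisation by $\rho$ itself. Since $\rho$ is $R_{\pi/2}$-invariant, $H$ is too, so $R_{\pi/2}$ commutes with the flow. By symmetry, the quarter-period map on each level curve then sends a point $z$ to $R_{\pi/2}z$.

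To make this precise: the flow moves points of $\{\rho=c\}$ around the curve, and $R_{\pi/2}$ maps the curve to itself. Divide the curve into 4 arcs that are permuted cyclically by $R_{\pi/2}$. Each arc is traversed in time $\tau(c)/4$, because $R_{\pi/2}$ conjugates the flow to itself. Hence $\Phi_{\tau(c)/4}(z)=R_{\pi/2}z$ for every $z$ on the curve.

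Concretely, I set $F'(c)=\frac{\tau_0(c)}{4}$ on the inner levels, where $\tau_0(c)$ is the period of the $\rho$-flow on the level $c$. Then let $F'$ decrease smoothly to $0$ near the boundary levels. The map $\phi_n=\Phi_1$ then has all the required properties.

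\textbf{Main obstacle: the origin.} Near the origin, $\rho$ is not smooth, and $\tau_0(c)$ is singular as $c\to0$. The fix is to take $\rho$ equal to the Euclidean radius $r=\sqrt{x^2+y^2}$ near $0$. There I use $H=\frac{\pi}{4}r^2$, which gives exactly the rotation, and I take $F(\rho)$ to be a smooth function of $\rho^2$. The remaining work is a routine verification that $\tau_0$ is smooth away from the origin. Finally, rescale from $[-\frac12,\frac12]^2$ back to $[0,1]^2$.
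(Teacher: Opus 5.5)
The paper gives no proof of this proposition. It is quoted from \cite{windsor07} and then used only as a black box, through the rescaled maps $\phi_{n,p}=C_p^{-1}\phi_nC_p$, whose norms are bounded by $p^{R(j)}\|\phi_n\|_j$ however $\phi_n$ was built. Your Hamiltonian construction is a correct, self-contained substitute; the paper simply buys brevity by outsourcing.

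The core steps hold:
\begin{itemize}
\item $R_{\pi/2}$ is symplectic and $H=F(\rho)$ is $R_{\pi/2}$-invariant, so $R_{\pi/2}$ commutes with $\Phi_t$.
\item Hence on each level curve $R_{\pi/2}$ equals a single time-$s$ map. Since $R_{\pi/2}^4=\mathrm{Id}$ and neither $R_{\pi/2}$ nor $R_{\pi/2}^2$ has fixed points off the origin, $s\in\{\tau/4,3\tau/4\}$.
\item Since $X_{F(\rho)}=F'(\rho)X_\rho$, the normalization $F'=\tau_0/4$ makes $\Phi_1=R_{\pi/2}$ on every level where it holds.
\end{itemize}
Your treatment of the origin is right, and the obstacle is genuine. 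For the $1$-homogeneous $\rho_p=(x^{2p}+y^{2p})^{1/2p}$ one has $\tau_0(c)=c\,\tau_0(1)$. That would force $H$ to be a nonzero multiple of $(x^{2p}+y^{2p})^{1/p}$, which is not $C^2$ at $0$. With $\rho=r$ near $0$ one gets $\tau_0(c)=2\pi c$, so the same normalization gives exactly $F(c)=\frac{\pi}{4}c^2$, consistent with your local choice.

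Two details should be made explicit.

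\emph{Orientation.} $\Phi_{\tau(c)/4}=R_{\pi/2}$, rather than $R_{-\pi/2}$, requires the flow to turn counterclockwise. This holds for $X_H=J\nabla H$ with $J$ the rotation by $+\pi/2$, $F'>0$, and $\rho$ increasing outward. Your four-arc argument uses this silently, since it assumes that following the flow from $z$ one meets $R_{\pi/2}z$ before $R_{\pi/2}^2z$.

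\emph{The condition on $\rho$.} What you need is not convexity but positivity of the radial derivative of $\rho$ on $\{0<\rho\le c_2\}$, where $c_2$ is the level beyond which $F'=0$. For the glued function $\rho=\lambda(r)r+(1-\lambda(r))\rho_p$, with $\theta$ the direction on the unit circle, this derivative is $\lambda+(1-\lambda)\rho_p(\theta)+r\lambda'(r)\bigl(1-\rho_p(\theta)\bigr)$. Since $\rho_p(\theta)\ge 1/\sqrt2$ on the unit circle, it stays positive if $\lambda$ varies slowly in $\log r$. Then every level is a regular star-shaped closed curve and each ray is a global transversal. Smoothness of $\tau_0$ on $(0,c_2]$ follows from the implicit function theorem applied to the return time to a ray.
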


Let $p \geq 2$ a real number and \[ C_p : \begin{array}[t]{lcl} [0,1] \times \left[0, \frac{1}{p}\right]  &\rightarrow &   [0,1] \times [0,1]  \\
                               (x,y) & \mapsto    & (x,py)
           \end{array}
           \]

Let $\phi_{n,p}= C_p^{-1} \phi_n C_p$. The map $\phi_{n,p}$ is smooth and measure-preserving. By the Faa-di-Bruno formula, there exists a fixed function $ \rrr(j) \label{r1jrotisom}$ such that \[ \| \phi_{n,p} \|_j  \leq p^{R_{\ref{r1jrotisom}}(j)} \|\phi_n \|_j \]

Since $\phi_n$ is fixed, by choosing a larger $R_{\ref{r1jrotisom}}(n)$, we have:

\[ \| \phi_{n,p} \|_{n+1}  \leq p^{R_{\ref{r1jrotisom}}(n)} \]

For $0 \leq l \leq b_{n+1}$, on $[0,1] \times \left( \frac{k_n(l)}{q_n}+ \frac{r_n(l)}{q_{n+1}} + \left[ 0, \frac{w}{q_{n+1}} \right] \right) $, we let $A_{n+1}^3 = \phi_{n,q_{n+1}/w} $ and \[ E_{n+1,0}^3= \left[\frac{1}{2^{n+1}},1-\frac{1}{2^{n+1}} \right]  \times  \left( \frac{k_n(l)}{q_n}+ \frac{r_n(l)}{q_{n+1}} + \left[\frac{w}{2^{n+1}q_{n+1}},  \frac{w}{q_{n+1}} - \frac{w}{2^{n+1}q_{n+1}} \right] \right) \] We extend $A_{n+1}^3$ to $[0,1] \times \left( \frac{k_n(l)}{q_n}+ \frac{r_n(l)}{q_{n+1}} + \left[ 0, \frac{(h_n(l)-1)w}{q_{n+1}} \right] \right) $ by $S_{\frac{w}{q_{n+1}}}$-equivariance. 

\bigskip

Likewise, for $x=1,...,h_n(l)-2$, we define: $ E_{n+1,x}^3= \frac{xw}{q_{n+1}} +  E_{n+1,0}^3 $.

\bigskip

On $ [0,1] \times  \left( \frac{k_n(l)}{q_n}+ \frac{r_n(l)}{q_{n+1}} + \left[ \frac{(h_n(l)-1)w}{q_{n+1}}, \frac{h_n(l)w+t_n(l)}{q_{n+1}} \right] \right) $, we let $A_{n+1}^3 =  \phi_{n,\frac{q_{n+1}}{w+t_n(l)}} $. This completes the construction of $A_{n+1}^3$ on $R^{(n)}$. By $S_{\frac{1}{q_{n}}}$-equivariance, we get the definition of $A_{n+1}^3$ on the whole manifold $M$. Moreover, since $0 \leq t_n(l) \leq w-1$, there exists $R_{\ref{estdeanplusuntrois}}(n)$ such that:

\[ \|A_{n+1}^3 \|_{n+1}  \leq \max \left( \| \phi_{n,\frac{q_{n+1}}{w}} \|_{n+1},  \max_{0 \leq l \leq b_{n+1}-1} \| \phi_{n,\frac{q_{n+1}}{w+t_n(l)}} \|_{n+1} \right) \leq   \left(\frac{q_{n+1}}{w}\right)^{R_{\ref{estdeanplusuntrois}}(n)}   \]

\bigskip

Let: \[ E_{n+1,h_n(l)-1}^3= \left[\frac{1}{2^{n+1}},1-\frac{1}{2^{n+1}} \right] \times \left( \frac{k_n(l)}{q_n}+ \frac{r_n(l)}{q_{n+1}} + \frac{(h_n(l)-1)w}{q_{n+1}}  +  \left[ \frac{w+t_n(l)}{q_{n+1}2^{n+1}}, \frac{w+t_n(l)}{q_{n+1}} - \frac{w+t_n(l)}{q_{n+1}2^{n+1}} \right] \right)  \]

We let $E_{n+1}^{3,0}= \bigcup_{x=0}^{h_n(l)-1} E_{n+1,x}^3$, and $ E_{n+1}^3= \bigcup_{y=0}^{q_n-1} \frac{y}{q_{n}} +  E_{n+1}^{3,0} $. This completes the construction of $ E_{n+1}^3$. Moreover, we have: $\mu( E_{n+1}^{3 c}) \leq 4/2^n$.

\bigskip

Moreover, since $t_n(l) \leq w$, then for any $i=0,...,q_{n+1}-1$, we have: 

\[  diam \left( (A_{n+1}^3)^{-1} \left( \left[ i/q_{n+1} ,  (i+1)/q_{n+1} \right[ \times \left[0,1 \right] \right) \cap E_{n+1}^{3} \right) \leq  \max \left(  \frac{1}{w}, \max_{0 \leq l \leq b_{n+1}-1} \frac{w+t_n(l)}{q_{n+1}} \right)  \leq \max \left(  \frac{1}{w}, \frac{2w}{q_{n+1}} \right)  \]

\end{proof}

%We now prove :

\begin{proof}[Proof of corollary \ref{corogenerationrotisom}.] 
By the recurrence assumption on $B_n$, there exists $\rrr(n) \label{rdebnn}$ such that $\|B_n\|_1 \leq (b_n q_{n-1})^{R_{\ref{rdebnn}}(n-1)}$. Let \[ w = \left\lfloor  \frac{q_{n+1}}{2^{n+1} q_n^2 (b_n q_{n-1})^{R_{\ref{rdebnn}}(n-1)}  }  \right\rfloor \]

This choice of $w$ determines $A_{n+1}^3 $ in lemma \ref{lemme3}. Let $A_{n+1}=A_{n+1}^3 A_{n+1}^2 A_{n+1}^1$ and $E_{n+1}= E_{n+1}^3 \cap A_{n+1}^3 (E_{n+1}^2) \cap  A_{n+1}^3 A_{n+1}^2 (E_{n+1}^1)$. 
By lemma \ref{lemme3}, we have:

\[  \| A_{n+1}^3 \|_{n+1} \leq (q_{n+1})^{R_{\ref{estdeanplusuntrois}}(n)}  \left(  \frac{2^{n+2} q_n^2 (b_nq_{n-1})^{R_{\ref{rdebnn}}(n-1)} }{q_{n+1}}  \right)^{R_{\ref{estdeanplusuntrois}}(n)}  \leq (b_{n+1}q_n)^{\rrr\label{rbis}(n)}  \]

for a fixed sequence $R_{7}(n)$. This ensures the existence of $R_{\ref{rpol3}}(n)$ such that:

\[ \|A_{n+1} \|_{n+1}  \leq  \left(b_{n+1} q_n \right)^{R_{\ref{rpol3}}(n)} \] %\|\phi_n \|_{n+1}  \] 

Moreover, we have:

\[  w \leq  \frac{q_{n+1}}{2^{n+1} q_n^2 \|B_n\|_1    }  \]

Since $q_n$ divides $q_{n+1}$, and by the left-hand side of assumption $\ref{65rotisom}$ of lemma \ref{lemmefondarotisom}, we have:

\[ \left| \frac{p_{n+1}}{q_{n+1}} - \frac{p_n}{q_n} \right| \geq \frac{1}{q_{n+1}} \]

Therefore, by the right-hand side of assumption $\ref{65rotisom}$, $q_{n+1} \geq (b_{n+1} q_n)^{R_{\ref{r0csterotisom}}(n)}$.

We will choose an explicit sequence $R_{\ref{r0csterotisom}}(n)$ such that: \[(b_{n+1} q_n)^{R_{\ref{r0csterotisom}}(n)} \geq \left( 2^{n+1} q_n^{3/2} (b_n q_{n-1})^{R_{\ref{rdebnn}}(n-1)} \right)^{2} \] This choice implies: \[ q_{n+1} \geq \left( 2^{n+1} q_n^{3/2} (b_n q_{n-1})^{R_{\ref{rdebnn}}(n-1)} \right)^{2} \]

Therefore,

\[ w \geq \frac{q_{n+1}}{2^{n+1} q_n^2 (b_n q_{n-1})^{R_{\ref{rdebnn}}(n-1)}  } -1 \geq \frac{1}{2} \frac{\left( 2^{n+1} q_n^{3/2} (b_n q_{n-1})^{R_{\ref{rdebnn}}(n-1)} \right)^{2}}{2^{n+1} q_n^2 (b_n q_{n-1})^{R_{\ref{rdebnn}}(n-1)}  }  \geq 2^n q_n \|B_n\|_1  \]

Therefore, by lemma \ref{lemme3}, for any $i=0,...,q_{n+1}-1$, we have: 

\[  diam \left( (A_{n+1}^3)^{-1} \left(   \left[0,1 \right] \times \left[ i/q_{n+1} ,  (i+1)/q_{n+1} \right[ \right) \cap E_{n+1}^{3} \right) \leq  \frac{1}{ 2^n q_n \|B_n\|_1} \]

Therefore, by lemma \ref{lemme2rotisom},

\[  diam \left(  (A_{n+1}^2)^{-1} \left(  (A_{n+1}^3)^{-1} \left( \left( \left[0,1 \right] \times \left[ i/q_{n+1} ,  (i+1)/q_{n+1} \right[  \right) \cap E_{n+1}^{3} \right) \right) \cap E_{n+1}^{2}    \right)    \leq  \frac{1}{ 2^n \|B_n\|_1}   \]

and therefore, we also have:

\[  diam \left( (A_{n+1}^1)^{-1} \left(  (A_{n+1}^2)^{-1} \left(  (A_{n+1}^3)^{-1} \left(\left(\left[0,1 \right]  \times \left[ i/q_{n+1} ,  (i+1)/q_{n+1} \right[  \right) \cap E_{n+1}^{3} \right) \right) \cap E_{n+1}^{2}    \right) \cap E_{n+1}^{1}    \right) \leq \frac{1}{2^n \|B_n\|_1}  \]

%Hence the corollary.
\end{proof}

\subsection{Construction in higher dimensions}

The construction in higher dimensions is slightly different of the previous chapter. The first two steps are the same as in dimension $2$ (we make the construction in the plan $(x_1,x_d)$, see the previous chapter), and for the third step (generation), we combine all $d-1$ dimensions. The following lemma generalizes lemma \ref{lemme3}:

\begin{lemma}
\label{lemme3gal}
For any integers $w_1,...,w_{d-1} \geq 1$ such that $2 \prod_{i=1}^{d-1} w_i \leq q_{n+1}$, there is a smooth, measure-preserving, and $S_{\frac{1}{ q_n}}$-equivariant diffeomorphism $A_{n+1}^3$, and an explicit sequence of integers $\rrr(n)\label{estdeanplusuntroisgal}$, such that:

\[ \|A_{n+1}^3 \|_{n+1}  \leq  \left(\frac{q_{n+1}}{w_1}\right)^{R_{\ref{estdeanplusuntroisgal}}(n)} \]%\|\phi_n \|_j  \] 

and there exists a $S_{\frac{1}{ q_n}}$-invariant and $A_{n+1}^3$-invariant set $E_{n+1}^3$ such that 

$\mu( E_{n+1}^{3 c}) \leq 4/2^n$, and such that for any $i=0,...,q_{n+1}-1$, we have: 

\[  diam \left( (A_{n+1}^3)^{-1} \left(  \left[0,1 \right]^{d-1} \times \left[ i/q_{n+1} ,  (i+1)/q_{n+1} \right[  \right) \cap E_{n+1}^{3} \right) \leq \max \left(  \frac{1}{w_1},...,\frac{1}{w_{d-1}}, \frac{2^{d-1}w_1...w_{d-1}}{q_{n+1}} \right)  \]

\end{lemma}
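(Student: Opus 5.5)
We mimic the proof of lemma \ref{lemme3}, working inside $R^{(n)}$ (a fundamental domain of $S_{\frac{1}{q_n}}$) and extending by $S_{\frac{1}{q_n}}$-equivariance. Each connected component $[0,1]^{d-1}\times J_l$ of $R^{(n)}$, with $J_l=\frac{k_n(l)}{q_n}+\frac{r_n(l)}{q_{n+1}}+[0,f_n(l)/q_{n+1}[$ (lemma \ref{estdern}), is cut into vertical blocks of height $W/q_{n+1}$, where $W=w_1\cdots w_{d-1}$. The last block has height $(W+t)/q_{n+1}$ with $0\le t\le W-1$, where $f_n(l)=h W+t$ is the Euclidean division; the hypothesis $2W\le q_{n+1}$ is what makes these blocks meaningful. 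The goal is to construct, in each block, a measure-preserving map that sends each thin horizontal slice $[0,1]^{d-1}\times[i/q_{n+1},(i+1)/q_{n+1}[$ essentially onto a small box of size $\frac{1}{w_1}\times\cdots\times\frac{1}{w_{d-1}}\times\frac{W}{q_{n+1}}$ (or $\frac{W+t}{q_{n+1}}$ for the last block).

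For the map on a block, we iterate two-dimensional quasi-rotations, one direction at a time. We rescale the block to $[0,1]^d$ via $C$ (multiplication of $x_d$ by $q_{n+1}/W$), so that the $W$ horizontal slices become slabs of height $1/W$. We then apply, in the plane $(x_1,x_d)$, the quasi-rotation $\phi_n$ of proposition \ref{propwindsorrotisom} (acting as the identity in the other coordinates). This is done not on the whole square but on each of the $w_2\cdots w_{d-1}$ sub-slabs of height $1/(w_2\cdots w_{d-1})$, rescaled by $\phi_{n,p}$-type conjugations. After this first step, slices are cut to width $1/w_1$ in $x_1$ and height $1/(w_2\cdots w_{d-1})$. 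Next we quasi-rotate in the plane $(x_2,x_d)$ on slabs of height $1/(w_3\cdots w_{d-1})$, and so on, up to $(x_{d-1},x_d)$. Concretely, the block map is $C^{-1}\Phi_{d-1}\cdots\Phi_1 C$ with $\Phi_k$ a product of rescaled quasi-rotations $\phi_{n,p_k}$, applied with rescaling factor $p_k=w_k\cdots w_{d-1}$ in the $(x_k,x_d)$-plane. The result is smooth and measure-preserving, and equals the identity near the block boundaries, so the blocks glue smoothly and the $S_{\frac{1}{q_n}}$-equivariant extension is a diffeomorphism. The exceptional set is built from the good sets $[1/2^{n+1},1-1/2^{n+1}]^2$ of each quasi-rotation, pulled back through the later stages. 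Its complement has measure $\le c_d/2^n$, and we would replace $2^{-n}$ by a smaller fixed power to obtain $4/2^n$. The diameter bound $\max(1/w_k,2^{d-1}W/q_{n+1})$ then follows, the vertical size being at most $(W+t)/q_{n+1}\le 2W/q_{n+1}$ (the factor $2^{d-1}$ gives room for the slicing roundings).

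The norm estimate is where we expect trouble. By Faà di Bruno, $\|\phi_{n,p}\|_{n+1}\le p^{R(n)}$, and composition of $d-1$ such maps costs a product of powers. The largest rescaling involved is $q_{n+1}/W$ composed with factors $w_k\cdots w_{d-1}$, and the total should be bounded by $(q_{n+1}/w_1)^{R(n)}$. The worry is that $q_{n+1}/W$ is \emph{smaller} than $q_{n+1}/w_1$, whereas the anisotropic factors $w_2\cdots w_{d-1}$ could be large. We plan to handle this by bounding every factor by $q_{n+1}/w_1$: we have $q_{n+1}/W\le q_{n+1}/w_1$, and $w_k\cdots w_{d-1}\le W/w_1\le q_{n+1}/(2w_1)$ by the hypothesis $2W\le q_{n+1}$. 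The composed norm is then $\le (q_{n+1}/w_1)^{R(n)}$ after enlarging $R(n)$. The last block, with denominator $W+t\le 2W$, only changes constants.
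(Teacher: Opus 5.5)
Your construction is essentially the paper's. The paper sets $A_{n+1}^3=A^{3,1}_{n+1,w_1}A^{3,2}_{n+1,w_1w_2}\cdots A^{3,d-1}_{n+1,w_1\cdots w_{d-1}}$, where $A^{3,k}_{n+1,w}$ is the two-dimensional map of lemma \ref{lemme3} with parameter $w$ acting in the $(x_k,x_d)$-plane, and your conjugated map $C^{-1}\phi_{n,p_k}C$ is exactly $\phi_{n,q_{n+1}/(w_1\cdots w_k)}$, i.e.\ that map on the regular blocks, which gives the bound $(q_{n+1}/w_1)^{R(n)}$ directly without your factor-by-factor estimate. Fix the index slip $p_k=w_{k+1}\cdots w_{d-1}$, as in your own description of stage 1 (with $p_1=w_1\cdots w_{d-1}$ your inequality $p_1\le W/w_1$ is false), and note that the remainder block, which you dismiss with ``only changes constants'', is handled in the paper by composing the 2D maps, each with its own Euclidean division of $f_n(l)$, so that every stage-$(k+1)$ block is a union of stage-$k$ blocks and no slice straddles a sub-slab boundary.
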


%Figure \ref{step3multidim} illustrates the action of $A_{n+1}^3$.

As in dimension 2, we let $A_{n+1}=A_{n+1}^3 A_{n+1}^2 A_{n+1}^1$ and \[ E_{n+1}= E_{n+1}^3 \cap A_{n+1}^3 (E_{n+1}^2) \cap  A_{n+1}^3 A_{n+1}^2 (E_{n+1}^1) \]

We obtain the corollary:

\begin{corollary}
\label{corogenerationgal}
There exists an explicit sequence of integers $\rrr (n) \label{rpol3gal}$ depending only on $n$, there is a smooth, measure-preserving, and $S_{\frac{1}{ q_n}}$-equivariant diffeomorphism $A_{n+1}$, such that:

\[ \|A_{n+1} \|_{n+1}  \leq  \left(b_{n+1} q_n \right)^{R_{\ref{rpol3gal}}(n)} \] %\|\phi_n \|_{n+1}  \] 

and there exists a $S_{\frac{1}{ q_n}}$-invariant and $A_{n+1}$-invariant set $E_{n+1}$ such that 

$\mu( E_{n+1}^{ c}) \leq 4/2^n$, and such that for any $i=0,...,q_{n+1}-1$, we have: 

\[  diam \left( (A_{n+1})^{-1} \left( \left[0,1 \right]^{d-1} \times \left[ i/q_{n+1} ,  (i+1)/q_{n+1} \right[ \right) \cap E_{n+1} \right) \leq \frac{1}{2^n \|B_n\|_1}   \]

\end{corollary}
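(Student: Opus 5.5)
The argument follows the proof of Corollary \ref{corogenerationrotisom}, with Lemma \ref{lemme3gal} replacing Lemma \ref{lemme3}. We set $A_{n+1}=A_{n+1}^3A_{n+1}^2A_{n+1}^1$, where $A_{n+1}^1$ and $A_{n+1}^2$ are the maps of Lemmas \ref{lemme1rotisom} and \ref{lemme2rotisom}, built in the plane $(x_1,x_d)$ and extended trivially in the other coordinates. Then $A_{n+1}$ is smooth, measure-preserving and $S_{\frac{1}{q_n}}$-equivariant, and $E_{n+1}$ is the intersection defined just before the statement. The only new input is the choice of the integers $w_1,\dots,w_{d-1}$. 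We use the recurrence bound $\|B_n\|_1\le (b_nq_{n-1})^{R_{\ref{rdebnn}}(n-1)}$ and set $D_n=2^{n+1}q_n^2(b_nq_{n-1})^{R_{\ref{rdebnn}}(n-1)}$, which bounds $2^{n+1}q_n^2\|B_n\|_1$. We then take all the $w_i$ equal to
\[ w=\left\lfloor \left(\frac{q_{n+1}}{2^{d}D_n}\right)^{1/d}\right\rfloor . \]
This balances the two kinds of terms in the diameter bound of Lemma \ref{lemme3gal}, namely $1/w$ and $2^{d-1}w^{d-1}/q_{n+1}$.

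We first check the hypotheses and the size estimates. The condition $2w^{d-1}\le q_{n+1}$ is immediate. To get $w\ge 2^nq_n\|B_n\|_1$, we strengthen the choice of $R_{\ref{r0csterotisom}}(n)$ so that $(b_{n+1}q_n)^{R_{\ref{r0csterotisom}}(n)}\ge \bigl(2^{d+1}D_n\bigr)^{d+1}$, say. Assumption \ref{66rotisom} gives $q_{n+1}\ge(b_{n+1}q_n)^{R_{\ref{r0csterotisom}}(n)}$, as in the two-dimensional proof. Hence $q_{n+1}/(2^dD_n)$ is at least $(2D_n)^{d}$, which gives $w\ge D_n\ge 2^nq_n\|B_n\|_1$. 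On the other side we need $2^{d-1}w^{d-1}/q_{n+1}\le 1/(2^nq_n\|B_n\|_1)$. Since $w^{d-1}\le w^d\cdot w^{-1}$ and $w^d\le q_{n+1}/(2^dD_n)$, we get $2^{d-1}w^{d-1}/q_{n+1}\le 1/(2D_nw)$, which is small enough. Together these give
\[ \operatorname{diam}\Bigl((A_{n+1}^3)^{-1}\bigl([0,1]^{d-1}\times[i/q_{n+1},(i+1)/q_{n+1}[\bigr)\cap E^3_{n+1}\Bigr)\le \frac{1}{2^nq_n\|B_n\|_1}. \]

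Next we pull back through the other two maps. By Lemma \ref{lemme2rotisom}, intersecting with $E^2_{n+1}$ and applying $(A^2_{n+1})^{-1}$ multiplies diameters by at most $q_n$. The step $A^1_{n+1}$ is a rigid rotation by $k_n(l_i)/q_n$ on each $\Gamma_i$, so after intersecting with $E^1_{n+1}$ it changes nothing. This yields the bound $1/(2^n\|B_n\|_1)$. The measure estimate on $E_{n+1}^c$ comes from adding the complements of the three sets. They have measures $1/2^n$, at most $4/2^n$ from Lemma \ref{lemme3gal}, and the small contribution from Lemma \ref{lemme2rotisom}. Up to a harmless constant, which can be absorbed by replacing $n$ with $n+2$ in the construction parameters, this gives the stated bound.

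The main obstacle is the polynomial norm estimate. Lemma \ref{lemme3gal} gives $\|A^3_{n+1}\|_{n+1}\le (q_{n+1}/w)^{R_{\ref{estdeanplusuntroisgal}}(n)}$. With our $w$ we have $q_{n+1}/w\approx q_{n+1}^{(d-1)/d}(2^dD_n)^{1/d}$, which grows like a power of $q_{n+1}$ and not of $b_{n+1}q_n$. In dimension $2$ with $w\approx q_{n+1}/D_n$ this problem does not arise. If it is real, the remedy I would try first is to take the $w_i$ unequal: $w_1=\lfloor q_{n+1}/(2^{d}D_n^{d-1})\rfloor$ and $w_2=\dots=w_{d-1}=\lceil D_n\rceil$. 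Only $w_1$ enters the norm bound, so $q_{n+1}/w_1\le 2^{d+1}D_n^{d-1}$, which is polynomial in $b_{n+1}q_n$. The balance still holds: $1/w_i\le 1/D_n$ for every $i$, and $2^{d-1}w_1\cdots w_{d-1}/q_{n+1}\le 2^{d-1}D_n^{d-2}/(2^dD_n^{d-1})$, which is at most $1/D_n$ (up to the ceilings). So I would commit to this asymmetric choice. Then $\|A_{n+1}\|_{n+1}\le \|A^3_{n+1}\|_{n+1}\|A^2_{n+1}\|_{n+1}\|A^1_{n+1}\|_{n+1}$, together with $\epsilon_1=1/(2^nb_{n+1})$ and Faà di Bruno for the composition, gives $(b_{n+1}q_n)^{R_{\ref{rpol3gal}}(n)}$.
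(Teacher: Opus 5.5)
Your final asymmetric choice is essentially the paper's own. The paper takes $w_1=\lfloor q_{n+1}/(2D)^d\rfloor$ and $w_2=\dots=w_{d-1}=D$, with $D=2^nq_n(b_nq_{n-1})^{R(n-1)}$ bounding $2^nq_n\|B_n\|_1$. Then only $q_{n+1}/w_1$, which is polynomial in $b_{n+1}q_n$, enters the norm bound of Lemma \ref{lemme3gal}, while every diameter term is at most $1/(2^nq_n\|B_n\|_1)$, and it pulls back through Lemmas \ref{lemme2rotisom} and \ref{lemme1rotisom} exactly as you do. The obstruction you flag for equal $w_i$ is real: it makes $\|A_{n+1}\|_{n+1}$ grow like a power of $q_{n+1}$, which would break the convergence estimate for $T_n$, so drop the balanced choice from the write-up rather than presenting it as a first attempt.
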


%\begin{figure}[h]
%\label{anplusuntroisfig}
%\centering
%\includegraphics[height=8cm]{rotisomgener}
%\caption{Illustration of the third step (generation): a connected component of $R^{(n)}$ with $h_n(l)=3$.}
%\end{figure}

\begin{proof}[Proof of lemma \ref{lemme3gal}.]

We denote:

\[ \tilde{A}_{n+1,w}^{3}: \begin{array}[t]{lcl}  [0,1]\times \varmathbb{T}^1 &\rightarrow &  [0,1]\times \varmathbb{T}^1  \\
                               (x,y) & \mapsto    & (\tilde{A}_{n+1,1,w}^{3}(x,y),\tilde{A}_{n+1,2,w}^{3}(x,y))
           \end{array}
           \]

the map $A_{n+1}^{3}$ of the 2-dimensional case, given by lemma \ref{lemme3}, associated with the integer $w$. For $i=1,...,d-1$, we denote:

\[ A_{n+1,w}^{3,i}(x_1,...,x_d)= (x_1,...,x_{i-1}, \tilde{A}_{n+1,1,w}^{3} (x_i,x_d), x_{i+2},...,\tilde{A}_{n+1,2,w}^{3} (x_i,x_d)) \]

We let: $A_{n+1}^{3}=  A_{n+1,w_1}^{3,1} A_{n+1,w_1w_2}^{3,2}...A_{n+1,w_1...w_{d-1}}^{3,d-1}$ (see figures \ref{step3galbeforefigrotisom}, \ref{step3galafterfigrotisom}, \ref{step3galafterpostfigrotisom}). We define $E_{n+1}^3$ by analogy with lemma \ref{lemme3}.

\end{proof}

%We now prove :

\begin{proof}[Proof of corollary \ref{corogenerationgal}.] 

The proof is analogous to the proof of corollary \ref{corogenerationrotisom}. We let:

\[ w_1 = \left\lfloor  \frac{q_{n+1}}{\left( 2^{n+1} q_n (b_n q_{n-1})^{R_{\ref{rdebnn}}(n-1)} \right)^d }  \right\rfloor \]

and for $i=2,...,d-1$, $w_i=  2^{n} q_n (b_n q_{n-1})^{R_{\ref{rdebnn}}(n-1)}$. 

As in lemma \ref{lemme3}, there exists $R_{\ref{rpol3gal}}(n)$ such that:

\[ \|A_{n+1} \|_{n+1}  \leq  \left(b_{n+1} q_n \right)^{R_{\ref{rpol3gal}}(n)} \] %\|\phi_n \|_{n+1}  \] 

For $i=2,...,d-1$, we have:

\[ \frac{1}{w_i} \leq \frac{1}{2^{n} q_n (b_n q_{n-1})^{R_{\ref{rdebnn}}(n-1)}} \leq \frac{1}{2^n q_n \| B_n\|_1 } \]

Moreover, we have:

\[  \frac{2^{d-1} w_1w_2...w_{d-1}}{q_{n+1}} \leq \frac{1}{2^{n+1} q_n \|B_n\|_1}  \]

We will choose an explicit sequence $R_{\ref{r0csterotisom}}(n)$ such that: \[(b_{n+1} q_n)^{R_{\ref{r0csterotisom}}(n)} \geq \left( 2^{n} q_n (b_n q_{n-1})^{R_{\ref{rdebnn}}(n-1)} \right)^{d+1} \] This choice implies: \[ q_{n+1} \geq \left( 2^{n} q_n (b_n q_{n-1})^{R_{\ref{rdebnn}}(n-1)} \right)^{d+1} \]

Therefore,

\[  \frac{1}{w_1} \leq  \frac{\left( 2^{n} q_n (b_n q_{n-1})^{R_{\ref{rdebnn}}(n-1)} \right)^{d-1} }{q_{n+1}} \leq \frac{1}{2^n q_n \|B_n\|_1 } \]

%\[ w \geq \frac{q_{n+1}}{2^{n+1} q_n^2 (b_n q_{n-1})^{R_{\ref{rdebnn}}(n-1)}  } -1 \geq \frac{1}{2} \frac{\left( 2^{n+1} q_n^{3/2} (b_n q_{n-1})^{R_{\ref{rdebnn}}(n-1)} \right)^{2}}{2^{n+1} q_n^2 (b_n q_{n-1})^{R_{\ref{rdebnn}}(n-1)}  }  \geq 2^n q_n \|B_n\|_1  \]

By combining lemma \ref{lemme3gal} and lemma \ref{lemme2rotisom}, we obtain the corollary.

%\[  diam \left(  (A_{n+1}^2)^{-1} \left(  (A_{n+1}^3)^{-1} \left( \left( \left[0,1 \right] \times \left[ i/q_{n+1} ,  (i+1)/q_{n+1} \right[  \right) \cap E_{n+1}^{3} \right) \right) \cap E_{n+1}^{2}    \right)    \leq  \frac{1}{ 2^n \|B_n\|_1}   \]

%and therefore, we also have:

%\[  diam \left( (A_{n+1}^1)^{-1} \left(  (A_{n+1}^2)^{-1} \left(  (A_{n+1}^3)^{-1} \left(\left(\left[0,1 \right]  \times \left[ i/q_{n+1} ,  (i+1)/q_{n+1} \right[  \right) \cap E_{n+1}^{3} \right) \right) \cap E_{n+1}^{2}    \right) \cap E_{n+1}^{1}    \right) \leq \frac{1}{2^n \|B_n\|_1}  \]

%Hence the corollary.
\end{proof}

%:for $i=0,...,d-1$, $h_n \geq 1$, let $\phi_{n,q_n}^i(x_1,...,x_d)= (x_1,..,x_{i-1}, \phi_{n,h_n k_n q_n}(x_i,x_{i+1}),x_{i+2},...,x_d)$, extended by $1/h_n k_n q_n$-periodicity along the $x_d$ coordinate. We let \[ A_{n+1}^3(x_1,...,x_d)= \phi_{n,q_n}^1...\phi_{n,q_n}^{d-1}(x_1,...,x_d) \]

%For $j=1,...,d-1$, $r=0,...,h_n k_n q_n-1$, let also  \[ E_{j,r} = \left[0,1 \right]^{j-1} \times \left[ \frac{1}{2^n}, 1- \frac{1}{2^n}  \right] \times \left[ \frac{r}{h_n k_n q_n} + \frac{1}{h_n k_n q_n 2^n}, \frac{r+1}{h_n k_n q_n} - \frac{1}{h_n k_n q_n 2^n} \right] \times \left[0,1 \right]^{d-(j+1)}  \]

%(in the notations, we omit dependencies in $n$) and \[ E_j = \bigcup_{r=0}^{h_n k_n q_n-1} E_{j,r} \]

%Note that $E_j$ is $\phi_{n,q_n}^j$-invariant. We let 

%\[ E_{n+1}^3 = E- \phi_{n,q_n}^1(E_2) \bigcap...\bigcap \phi_{n,q_n}^1...\phi_{n,q_n}^{d-2}(E_{d-1}) \]

%By iterating the reasoning done in dimension $2$, we obtain, for 

%$l=0,...,(h_n k_n q_n )^d-1$:

%\[  diam \left( (A_{n+1}^3)^{-1} \Delta_{l,(h_n k_n q_n )^d} \cap E_{n+1}^{3} \right) \leq \frac{1}{h_n k_n q_n}   \]

%Finally, if we let $q_{n+1} = q'_{n+1} (h_n k_n q_n)^d$, and with a suitable choice of choice of $h_n$, we obtain the wanted estimation:

%\[  diam \left( (A_{n+1})^{-1} \left( \Delta_{l,q_{n+1}} \cap E_{n+1} \right) \right) \leq \frac{1}{2^n \|B_n\|_1}   \]

%Thus, we get generation, as in dimension $2$.

\begin{figure}[!h]
\centering
\includegraphics[height=5cm]{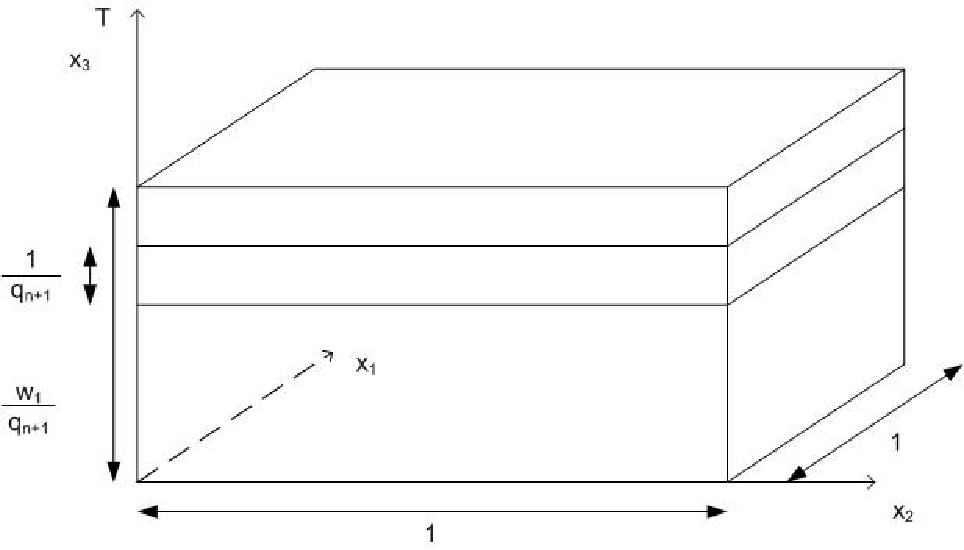}
\caption{An element $[0,1]^2 \times [i/q_{n+1}, (i+1)/q_{n+1}[$ (we take $d=3$), before the application of $(A_{n+1}^3)^{-1}= \left( A_{n+1,w_1w_2}^{3,2} \right)^{-1} \left( A_{n+1,w_1}^{3,1}\right)^{-1}$. Its size is $1 \times 1 \times 1/ q_{n+1}$.}
\label{step3galbeforefigrotisom}
\end{figure}

\begin{figure}[!h]
\centering
\includegraphics[height=5cm]{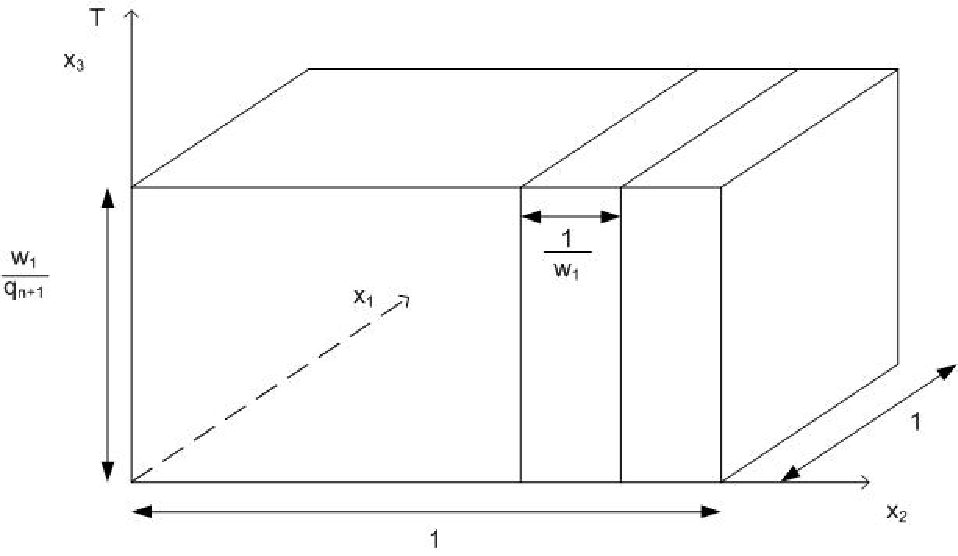}
\caption{The element $\left( A_{n+1,w_1}^{3,1}\right)^{-1} \left( [0,1]^2 \times [i/q_{n+1}, (i+1)/q_{n+1}[ \right) \bigcap E_{n+1}^3$. Its size is less than $1 \times  1/ w_1 \times w_1/ q_{n+1}$. }
\label{step3galafterfigrotisom}
\end{figure}

\begin{figure}[!h]
\centering
\includegraphics[height=5cm]{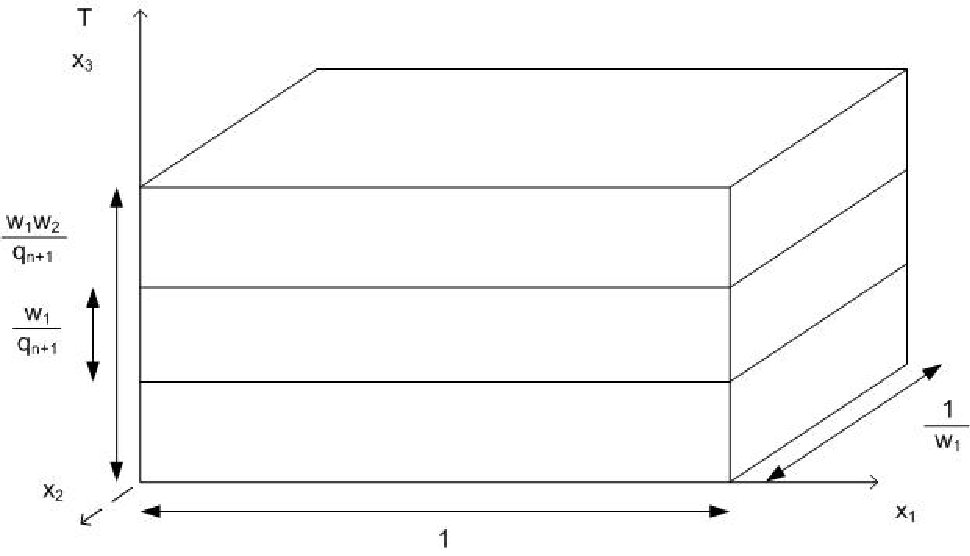}
\caption{$\left( A_{n+1,w_1}^{3,1}\right)^{-1} \left( [0,1]^2 \times [i/q_{n+1}, (i+1)/q_{n+1}[ \right) \bigcap E_{n+1}^3$, in the plan $(x_1,x_3)$.}
\label{step3galafterpostfigrotisom}
\end{figure}

%\clearpage

%After application of $\left( A_{n+1,w_1w_2}^{3,2} \right)^{-1} $, its size will be less than $1/ w_2 \times  1/ w_1 \times w_1w_2/ q_{n+1}$.
\begin{figure}[!h]
\centering
\includegraphics[height=5cm]{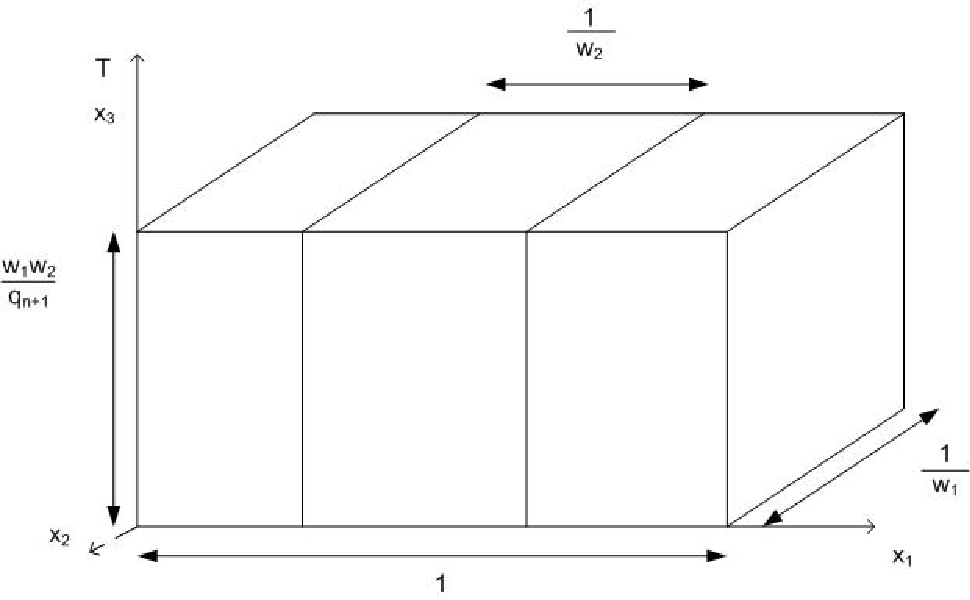}
\caption{$\left( A_{n+1,w_1w_2}^{3,2} \right)^{-1} \left( A_{n+1,w_1}^{3,1}\right)^{-1} \left( [0,1]^2 \times [i/q_{n+1}, (i+1)/q_{n+1}[ \right) \bigcap E_{n+1}^3$, in the plan $(x_1,x_3)$. Its size is less than $1/ w_2 \times  1/ w_1 \times w_1w_2/ q_{n+1}$.}
\label{step3galafterpostlaststepfigrotisom}
\end{figure}

\subsection{Convergence of the sequence of diffeomorphisms and ergodicity of the limit $T$. Proof that $T$ is a pseudo-rotation in dimension 2}

\label{cgcetn}

By combining lemma \ref{conditionsbnrotisom}, corollary \ref{corolisomrotisom}, and proposition \ref{propexistencerotisom}, and since  $\xi_n$ generates, then in order to complete the proof of lemma \ref{lemmefondarotisom}, it remains to show that $T_n= B_n^{-1} S_{\frac{p_n}{q_n}} B_n$ converges in the smooth topology, and that the limit $T$ of $T_n$ is ergodic.

To show the convergence of $T_n= B_n^{-1} S_{\frac{p_n}{q_n}} B_n $, by the Cauchy criterion, it suffices to show that $\sum_{n \geq 0} d_n(T_{n+1},T_n)$ converges. We combine the estimation of $B_{n+1}$ and the assumption \ref{65rotisom} of lemma \ref{lemmefondarotisom} of closeness between $p_{n+1}/q_{n+1}$ and $p_n/q_n$. We recall the lemma \cite[p.1812]{windsor07}:

\begin{lemma}
\label{faadirotrotisom}
Let $k \in \varmathbb{N}$. There is a constant $C(k,d)$ such that, for any $ h \in$ Diff$(M)$, $\alpha_1,\alpha_2 \in \varmathbb{R}$, we have:

\[ d_k(hS_{\alpha_1} h^{-1},hS_{\alpha_2} h^{-1} ) \leq C(k,d) \|h\|_{k+1}^{k+1} |\alpha_1-\alpha_2|  \]

\end{lemma}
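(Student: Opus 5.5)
The plan is to reduce the estimate to a pointwise bound on the flow-derivative of the conjugated family, followed by the Fa\`a di Bruno formula. Set $F_t = h S_t h^{-1}$. Since $S_t$ is a one-parameter group, $F_{\alpha_2} = F_{\alpha_1}\circ F_{\alpha_2-\alpha_1}$. It is also equivalent to compare $F_{\alpha_1}$ and $F_{\alpha_2}$ along the path $t\mapsto F_t$, $t\in[\alpha_1,\alpha_2]$. We write
\[ F_{\alpha_2}-F_{\alpha_1} = \int_{\alpha_1}^{\alpha_2} \frac{\ddd}{\ddd t} F_t \, dt = \int_{\alpha_1}^{\alpha_2} X\circ F_t \, dt, \]
where $X = h_*(\ddd_s)$ is the pushforward of the generating vector field $\ddd_s$ of $S_t$, that is, $X(y) = Dh(h^{-1}y)\,\ddd_s$. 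The same identity applies to the inverses $F_t^{-1}=F_{-t}$, which takes care of the negative orders $j<0$ in the definition of $\|\cdot\|_k$. Hence, for $|j|\le k$, we get $|D^j(F_{\alpha_2}-F_{\alpha_1})|\le |\alpha_1-\alpha_2|\sup_t |D^j(X\circ F_t)|$. (In the lift to $\tilde M$, $F_t$ has zero translation part, so the subtraction makes sense.)

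Next we bound $D^j(X\circ F_t)$ with Fa\`a di Bruno. First, $X = (Dh\,\ddd_s)\circ h^{-1}$, so its derivatives of order $\le j$ involve derivatives of $h$ up to order $j+1$ and of $h^{-1}$ up to order $j$. This gives $\|X\|_{C^j}\le C\,\|h\|_{j+1}^{j+1}$: each term is a product of at most $j+1$ factors, each bounded by $\|h\|_{j+1}$. Second, $F_t = h\circ S_t\circ h^{-1}$ and $S_t$ is an isometry with all derivatives bounded independently of $t$, so $\|F_t\|_{C^j}\le C\,\|h\|_j^{j}$ in the same way. Composing gives a polynomial in $\|h\|_{k+1}$. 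The products of the form $D^{i}X\cdot\prod DF_t$ have total weight controlled by the order $k$, and this yields the power $k+1$ stated in the lemma once constants depending only on $(k,d)$ are absorbed. The $d$-dependence comes from the number of multi-indices.

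The main obstacle is this bookkeeping of exponents. A naive count of the composition gives a product like $\|h\|^{k+1}\cdot\|h\|^{k}$, which is worse than $\|h\|_{k+1}^{k+1}$. To avoid it, I would not compose $X$ with $F_t$. Instead I would use $X\circ F_t = DF_t\cdot X$, which holds because $F_t$ preserves $X$, and more directly write
\[ X\circ F_t = (Dh\,\ddd_s)\circ S_t\circ h^{-1}. \]
Then only one composition with $h^{-1}$ occurs, applied to the map $Dh\circ S_t$, whose $C^k$ norm is bounded by $\|h\|_{k+1}$. Fa\`a di Bruno for $g\circ h^{-1}$ with $\|g\|_{C^k}\le\|h\|_{k+1}$ then gives a sum of terms, each containing one factor $D^ig$ and at most $k$ factors $D^{\cdot}h^{-1}$. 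So the total is at most $C(k,d)\|h\|_{k+1}^{k+1}$, as claimed; the case $k=0$ is simply $|X|\le\|h\|_1$. Since $\|h\|_{k+1}\ge1$ for a diffeomorphism, lower powers are dominated by this one, which closes the estimate.
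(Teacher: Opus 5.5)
Your proof is correct, and it is essentially the argument behind this lemma, which the paper does not prove itself but cites from \cite[p.1812]{windsor07}. That argument applies Fa\`a di Bruno to $h\circ S_t\circ h^{-1}$: the parameter enters only through the single outer factor $D^{i}h(S_t h^{-1}x)$, which costs one extra derivative of $h$ (your integration in $t$ plays the role of the mean value theorem there), at most $k$ factors are derivatives of $h^{-1}$, and negative orders are handled by $F_\alpha^{-1}=F_{-\alpha}$. The composition estimate in your second paragraph is superfluous, and its exponent count is off, so drop it and keep only the direct formula $\frac{d}{dt}F_t=(\partial_s h)\circ S_t\circ h^{-1}$.
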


%By also recalling thatand since, for $n \geq 2$,  $\|\phi_n \|_{n+1} \leq q_n^{\rrr(n)\label{rmajphin}}$ for a sequence $R_{\ref{rmajphin}}(n)$ independent of $q_n$ (because $q_n \geq 2$ for $n \geq 2$), 
Since $T_n= B_n^{-1} S_{\frac{p_n}{q_n}} B_n = B_{n+1}^{-1} S_{\frac{p_n}{q_n}} B_{n+1} $, we obtain, for a fixed sequence $\rrr(n) \label{r5csterotisom}$ (that depends on $n$ and on the dimension $d$):

\begin{eqnarray*}
d_n(T_{n+1},T_n) = d_n ( B_{n+1}^{-1} S_{\frac{p_{n+1}}{q_{n+1}}} B_{n+1}, B_{n+1}^{-1} S_{\frac{p_n}{q_n}} B_{n+1}) \leq C(k,d) \|B_{n+1}\|_{n+1}^{n+1} \left| \frac{p_{n+1}}{q_{n+1}} - \frac{p_n}{q_n} \right|  \\
\leq  \left( b_{n+1}  q_n \right)^{R_{\ref{r5csterotisom}}(n)} \left| \frac{p_{n+1}}{q_{n+1}} - \frac{p_n}{q_n} \right|  
\end{eqnarray*}

For some choice of the sequence $R_{\ref{r0csterotisom}}(n)$ in lemma \ref{lemmefondarotisom}, this last estimate guarantees the convergence of $T_n$ in the smooth topology. Moreover, the limit $T$ is ergodic, because it is metrically isomorphic to an irrational rotation of the circle, which is ergodic.

\bigskip

To show corollary \ref{corpseudorotrotisom}, let us show that $T$ is a pseudo-rotation when $d=2$.

\begin{proposition}
When $d=2$, the limit $T$ of $T_n$ is a pseudo-rotation of angle $\alpha$.
\end{proposition}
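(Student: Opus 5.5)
We need to show that the rotation set of $T$ is $\{\alpha\}$. First, $T$ is isotopic to the identity because $T_{|\ddd M}=S_{\alpha|\ddd M}$. We work with lifts $\tilde{T}_n=\tilde{B}_n^{-1}\tilde{S}_{p_n/q_n}\tilde{B}_n$ on $\tilde{M}=[0,1]\times\varmathbb{R}$. Here $\tilde{B}_n$ is the lift of $B_n$ that equals the identity near $\ddd\tilde{M}$; it exists since $B_n=Id$ near $\ddd M$. The lift $\tilde{T}$ is the $C^0$ limit of the $\tilde{T}_n$, and it equals the translation by $\alpha$ near the boundary.

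The key observation is that $\tilde{B}_n$ commutes with the deck translation $(x,s)\mapsto(x,s+1)$. Moreover $\tilde{B}_n$ is at bounded distance from the identity: writing $\tilde{B}_n(x,s)=(x',s')$, we have $\sup|s'-s|\leq c_n$ for some finite $c_n$. This holds because $\tilde{B}_n$ is the lift of a map isotopic to the identity, fixing the boundary pointwise, on a compact annulus.

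For any $N$ we have $\tilde{T}_n^N=\tilde{B}_n^{-1}\tilde{S}_{Np_n/q_n}\tilde{B}_n$. Hence for every $\tilde{x}$,
\[ \left|\frac{\tilde{T}_n^N(\tilde{x})-\tilde{x}}{N}-\frac{p_n}{q_n}\right|\leq \frac{2c_n}{N}, \]
with the difference taken in the second coordinate. To pass to $\tilde{T}$, I will compare $\tilde{T}^N$ with $\tilde{T}_n^N$ for $N$ in a window depending on $n$. Since $d_0(\tilde{T},\tilde{T}_n)\leq\sum_{k\geq n}d_0(T_{k+1},T_k)\leq\delta_n$, iterating does not blow up uniformly without Lipschitz control. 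So I will use the estimate
\[ d_0(\tilde{T}^N,\tilde{T}_n^N)\leq N\,\|\tilde{T}\|_1^{N}\delta_n, \]
or better, the telescoping bound
\[ d_0(\tilde{T}_{k+1}^N,\tilde{T}_k^N)\leq d_0\bigl(\tilde{B}_{k+1}^{-1}\tilde{S}_{Np_{k+1}/q_{k+1}}\tilde{B}_{k+1},\tilde{B}_{k+1}^{-1}\tilde{S}_{Np_k/q_k}\tilde{B}_{k+1}\bigr)\leq \|B_{k+1}\|_1\,N\left|\frac{p_{k+1}}{q_{k+1}}-\frac{p_k}{q_k}\right|. \]
This uses $T_k=B_{k+1}^{-1}S_{p_k/q_k}B_{k+1}$, as in the convergence section. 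Summing over $k\geq n$ gives
\[ d_0(\tilde{T}^N,\tilde{T}_n^N)\leq N\varepsilon_n, \qquad \varepsilon_n=\sum_{k\geq n}(b_{k+1}q_k)^{R(k)}\left|\frac{p_{k+1}}{q_{k+1}}-\frac{p_k}{q_k}\right|\to 0 \]
by assumption \ref{66rotisom}, so this bound is uniform in $N$.

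Then for all $N$ and $\tilde{x}$,
\[ \left|\frac{\tilde{T}^N(\tilde{x})-\tilde{x}}{N}-\alpha\right|\leq \frac{2c_n}{N}+\varepsilon_n+\left|\frac{p_n}{q_n}-\alpha\right|. \]
Taking $\limsup_{N\to\infty}$ and then $n\to\infty$ shows that every accumulation point in the definition of $\mbox{rot}(\tilde{T})$ equals $\alpha$. Since the set is nonempty (for instance, boundary points rotate by exactly $\alpha$), $\mbox{rot}(T)=\{\alpha\}$.

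The main obstacle is justifying the uniform $N$-linear bound. The step $d_0(hS_ah^{-1},hS_bh^{-1})\leq\|h\|_1|a-b|$ for lifts must be checked: $\tilde{S}_a$ and $\tilde{S}_b$ differ by $|a-b|$ pointwise, and $h=\tilde{B}_{k+1}^{-1}$ is Lipschitz with constant $\|B_{k+1}\|_1$ on the universal cover. The Lipschitz constant transfers because the lift is locally isometric to $B$. The resulting $\|B_{k+1}\|_1$ is controlled by proposition \ref{propexistencerotisom}, if necessary after enlarging $R_{\ref{r0csterotisom}}(n)$. The finiteness of each $c_n$ is clear from compactness and does not need to be uniform in $n$.
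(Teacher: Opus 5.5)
Your proposal is correct and follows essentially the paper's argument: in both, the key step is the telescoping bound $d_0(\tilde T^N,\tilde T_n^N)\le N\sum_{k\ge n}\|B_{k+1}\|_1\left|\frac{p_{k+1}}{q_{k+1}}-\frac{p_k}{q_k}\right|$, which comes from $T_k=B_{k+1}^{-1}S_{p_k/q_k}B_{k+1}$. The only difference is cosmetic: the paper then compares an arbitrary orbit with a boundary orbit using the $N$-independent Lipschitz bound $\|\tilde T_m^N\|_1\le\|B_m\|_1^2$, while you use the bounded displacement $c_n$ of $\tilde B_n$ (this bound holds for any lift, since lifts commute with the deck translation, so you do not need $\tilde B_n$ to be the identity near both boundary components, which can fail, e.g. for a Dehn-twist-like $B_n$).
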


\begin{proof}

Since $T_{| \ddd M}=S_{\alpha | \ddd M}$, then $T$ is isotopic to the identity, and $(0,\alpha) \in \mbox{rot}(T)$, where $\mbox{rot}(T)$ is the set of rotation vectors of $T$.

To show the proposition, it suffices to show that in any point of $M$, the rotation vector exists and is independent of the choice of the point. Let $\tilde{T}:[0,1] \times \varmathbb{R} \rightarrow [0,1] \times \varmathbb{R}$ a lift of $T$, $\epsilon>0$ and $x \in [0,1] \times \varmathbb{R}$ and $y \in \ddd ([0,1] \times \varmathbb{R})$. For any integer $n>0$, we have:

\[ \frac{\tilde{T}^n(x)-x}{n} - \frac{\tilde{T}^n(y)-y}{n} = \frac{\tilde{T}^n(x)-\tilde{T}^n(y)}{n} \]

Let $m$ such that: 

\[ \sum_{p \geq m} \|B_{p+1}\|_{1} \left| \frac{p_{p+1}}{q_{p+1}} - \frac{p_p}{q_p} \right|  \leq \epsilon \]

We have: \[ d_1(\tilde{T}^n,\tilde{T}_m^n) \leq \sum_{p \geq m} d_1(T_{p+1}^n,T_p^n) = d_1 ( B_{p+1}^{-1} S_{\frac{np_{p+1}}{q_{p+1}}} B_{p+1}, B_{p+1}^{-1} S_{\frac{np_p}{q_p}} B_{p+1}) \leq n \epsilon \]

Moreover, \[ \tilde{T}^n(x)-\tilde{T}^n(y) =  \tilde{T}^n(x)-\tilde{T}_m^n(x) + \tilde{T}_m^n(x)-\tilde{T}_m^n(y) + \tilde{T}_m^n(y)-\tilde{T}^n(y) \]

\[ \leq 2 d_1(\tilde{T}^n,\tilde{T}_m^n) + |\tilde{T}_m^n(x)-\tilde{T}_m^n(y)| \leq 2n \epsilon + | \tilde{T}_m^n(x)-\tilde{T}_m^n(y)| \] 

Moreover, \[ |\tilde{T}_m^n(x)-\tilde{T}_m^n(y)| \leq \| \tilde{T}_m^n \|_1 |x-y| \leq \| B_m \|_1^2 |x-y|    \]

Therefore, for any $n$ sufficiently large,

\[ \left|\frac{\tilde{T}^n(x)-\tilde{T}^n(y)}{n}\right| \leq 2 \epsilon + \frac{1}{n}  \| B_m \|_1^2 |x-y| \leq 3 \epsilon \]

Therefore, the translation vector of $x$ exists and is equal to the translation vector of $y$, which is $(0,\tilde{\alpha})$, where $\tilde{\alpha}$ is a lift of $\alpha$.

We conclude that $\mbox{rot}(T)= \{ (0,\alpha) \}$.

\end{proof}

%By taking $m \rightarrow + \infty $ in relation (\ref{relationtnm}), we obtain that 

\subsection{Extension to more general manifolds}

To extend the construction from $[0,1]^{d-1} \times \varmathbb{T}$ to a general $d$-dimensional smooth compact connected manifold $M$, admitting an effective volume-preserving circle action $\hat{S}_t$, we proceed as in \cite[p. 1805]{windsor07} and the previous chapter. We keep denoting $S_t$ the circle action on $[0,1]^{d-1} \times \varmathbb{T}$. For $q \geq 1$, let $F_q$ be the set of fixed points of $\hat{S}_{1/q}$. Let $B= \ddd M \bigcup_{q \geq 1} F_q$ be the set of exceptional points. We recall the proposition:

\begin{proposition}[\cite{windsor07}]
Let $M$ be a $d$-dimensional smooth compact connected manifold, with an effective  circle action $\hat{S}_t$, preserving a smooth volume $\mu$. Let $S_t$ denote the circle action on $[0,1]^{d-1} \times \varmathbb{T}$. There exists a continuous surjective map $\Gamma: [0,1]^{d-1} \times \varmathbb{T} \rightarrow M$ such that:

\begin{enumerate}
\item the restriction of $\Gamma$ to $]0,1[^{d-1} \times \varmathbb{T}$ is a smooth diffeomorphic embedding.
\item $\mu(\Gamma(\ddd([0,1]^{d-1} \times \varmathbb{T}) )) =0$
\item $B \subset \Gamma(\ddd([0,1]^{d-1} \times \varmathbb{T}) )$
\item $\Gamma_*(Leb)=\mu$
\item $\hat{S}_t\Gamma=\Gamma S_t$
\end{enumerate}

\end{proposition}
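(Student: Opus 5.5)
We construct $\Gamma$ in the standard way, following the construction of \cite[p. 1805]{windsor07}. The first step is to understand the orbit structure of $\hat{S}_t$ away from $B$. On $M \setminus B$ the action is free, since a point with nontrivial stabilizer is fixed by some $\hat{S}_{1/q}$, and it is proper because the circle is compact. So $N = (M\setminus B)/\hat{S}$ is a smooth $(d-1)$-manifold and $M\setminus B \to N$ is a principal $\varmathbb{T}^1$-bundle. The set $B$ is a closed union of $\ddd M$ and the fixed point sets $F_q$. Effectiveness, together with the fact that the fixed point set of a nontrivial isometry-like compact group element is a submanifold of positive codimension, should give $\mu(B)=0$. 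One can average a Riemannian metric to make $\hat S_t$ isometric, which gives this structure. In fact only finitely many isotropy types occur, by compactness and the slice theorem, so $B$ is a finite union of closed submanifolds of codimension $\geq 1$ in the interior, plus $\ddd M$.

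The second step is to choose a large open cell in the base. We pick an open set $U\subset N$ diffeomorphic to $]0,1[^{d-1}$ whose complement in $N$ has measure zero with respect to the pushforward measure $\nu$ of $\mu$. The standard existence of such a cell uses a Morse function or a triangulation: remove the $(d-2)$-skeleton-like cut locus, e.g. exponentiate from a point in a complete metric on $N$ so that the complement of the cut locus is a ball. Over the contractible set $U$ the bundle is trivial, so there is a smooth section $\sigma:]0,1[^{d-1}\to M\setminus B$. This gives an equivariant diffeomorphism $\Phi(x,s)=\hat S_s\sigma(x)$ onto the open set $\pi^{-1}(U)$, which has full measure.

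The third step is to fix the measure and the boundary behaviour. We have $\Phi^*\mu = \rho(x)\,dx\,ds$, where $\rho$ is independent of $s$ by invariance. We then compose with a diffeomorphism $h$ of $]0,1[^{d-1}$ that carries Lebesgue measure to the normalized measure $\rho\,dx$; such an $h$ exists by Moser's theorem, or explicitly by successive one-dimensional distribution functions. Setting $\Gamma = \Phi\circ(h\times \mathrm{id})$ gives properties 1, 4 and 5, and property 2 follows since the image of the boundary lies in the null set $M\setminus \pi^{-1}(U)$.

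The main obstacle is to extend $\Gamma$ continuously to the closed cube. The extension must be onto $M$, and it must send the boundary into the complement, which contains $B$, so that property 3 holds. Near the boundary of the cube we must control $h$ and $\sigma$ so that limits exist. To handle this, we construct $U$ as the interior of the image of a continuous surjection $[0,1]^{d-1}\to \bar N$; here $\bar N$ is the compact orbit space $M/\hat S$, which is a stratified space, and the image of the boundary of the cube is the cut locus together with the singular strata. For the section we choose $\sigma$ to extend continuously up to the closed cube, possibly through points of $B$. We require $h$ to be the identity near the boundary after a preliminary radial reparametrization, or more simply $h$ a homeomorphism of the closed cube. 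Surjectivity then follows because the image is compact and contains the dense set $\pi^{-1}(U)$.
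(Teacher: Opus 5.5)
The paper gives no proof of this proposition; it is quoted from \cite{windsor07}, so your sketch has to stand on its own. Your first three steps are sound. $\hat S$ acts freely on $M\setminus B$ (taking $F_q$ for $q\geq 2$). Note that $\mu(B)=0$ needs $\hat S_{1/q}\neq \mathrm{Id}$ for every $q\geq 2$, i.e.\ effectiveness in the usual sense. A single full-measure cell needs $N$ connected, which the principal orbit type theorem gives. A trivialization over such a cell plus a measure correction then yields 1, 4, 5 on the open cube. Your derivation of 2, 3 and surjectivity from a continuous extension is also correct: a boundary point cannot land in $\pi^{-1}(U)$, because $\Gamma$ maps the open cube homeomorphically onto that open set. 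The gap is that this continuous extension to the closed cube, which is the real content of the proposition, is only stated as a list of requirements in your last paragraph, never established.

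Concretely, three ingredients are missing.

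(i) You need a continuous surjection from $[0,1]^{d-1}$ onto $M/\hat S$ that restricts to a diffeomorphism of the open cube onto a full-measure cell of $N$. That is the same kind of statement one dimension down, on a singular space, and nothing in the proposal constructs it. It also conflicts with your Step 2: with a complete metric on $N$, the set $B/\hat S$ lies at infinity, the star-shaped domain in $T_pN$ is in general unbounded, and radial geodesics need not converge in $M/\hat S$.

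(ii) An arbitrary section over $U$ has no boundary limits; compose it with $\hat S_{\theta(x)}$ for some $\theta$ oscillating near $\partial U$. So $\sigma$ must be built, e.g.\ by horizontal lifts of radial paths, and this again requires those paths to have endpoints depending continuously on the direction.

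(iii) $h$ cannot be the identity near the boundary unless $\rho\equiv 1$ there, and you do not say how a preliminary reparametrization would arrange that. Neither the successive-distribution-function map nor Moser's theorem on the open cube controls the faces: the conditional densities $\rho(x_1,\cdot)/\int\rho(x_1,\cdot)$ need not converge as $x_1\to 0$, and $\rho$ can degenerate near $B$. What you need is a lemma that a smooth positive finite density on the open ball is the image of Lebesgue under a diffeomorphism of the open ball that extends to a homeomorphism of the closed ball.

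One way to make (i) and (ii) automatic is to work in $M$ rather than in the base. Take an $\hat S$-invariant metric and a free orbit $O\ni x_0$, and put $\Gamma(v,s)=\hat S_s\exp_{x_0}(v)$ for $v$ in the closed cut domain of $O$ in the normal space $\nu_{x_0}O\cong\varmathbb{R}^{d-1}$. Property 5 is then built in. Continuity and surjectivity follow from global continuity of $\exp$ and the existence of minimizing normal geodesics from $O$. Every point with nontrivial isotropy lies in the cut locus of $O$, since its isotropy carries a minimizing geodesic from $O$ to a distinct one, $O$ being free. You would still need a smooth radial reparametrization (the cut distance is only continuous), the measure lemma above, and a separate treatment of $\partial M$, e.g.\ by doubling.
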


We use this proposition at each step to apply lemma \ref{lemmekatokrotisom}. We let $\hat{T}_n: M \rightarrow M$ defined by $\hat{T}_n(x) = \Gamma B_n^{-1} S_{\frac{p_n}{q_n}} B_n \Gamma^{-1} (x)$ if $x \in \Gamma (]0,1[^{d-1} \times \varmathbb{T})$ and $\hat{T}_n(x)= \hat{S}_{\frac{p_n}{q_n}}(x)$ otherwise. To show that $\hat{T}_n$ is a smooth diffeomorphism (which implies that its limit is also smooth), we use the facts that $\Gamma_{|]0,1[^{d-1} \times \varmathbb{T}}$ is a smooth diffeomorphism, than $B_n=Id$ on a neighborhood of $\ddd([0,1]^{d-1} \times \varmathbb{T}$ and that $\hat{S}\Gamma=\Gamma S$. To construct the metric isomorphism $\hat{K}_n^\infty= \Gamma \bar{K}_n^\infty$, we use the fact that the restriction of $\Gamma$ to a set of full measure is a metric isomorphism. Details are in the previous chapter.

Finally, to show that $\hat{T} \in \mathcal{A}_\alpha$, where $\hat{T}$ is the limit of $\hat{T}_n$ in the smooth topology, we let $\hat{H}_n: M \rightarrow M$ defined by $\hat{H}_n(x)=  \Gamma B_n \Gamma^{-1} (x) $ if $x \in \Gamma (]0,1[^{d-1} \times \varmathbb{T})$ and $\hat{H}_n(x)=x$ otherwise. We write \[ \hat{T}- \hat{H}_n^{-1} \hat{S}_{\alpha} \hat{H}_n= \hat{T} - \hat{T}_n+\hat{T}_n- \hat{H}_n^{-1} \hat{S}_{\alpha} \hat{H}_n \]

We know that $\hat{T} - \hat{T}_n \rightarrow 0$ in the smooth topology. We show that $\hat{T}_n- \hat{H}_n^{-1} \hat{S}_{\alpha} \hat{H}_n \rightarrow 0$ in the smooth topology by proceeding as in the proof of the convergence of $T_n$ on $[0,1]^{d-1} \times \varmathbb{T}$ in subsection \ref{cgcetn}.

%\section{Conclusion and perspectives}
%\label{cclpers}

%In the first step of the construction of the diffeomorphism, we proceed slightly differently than in Katok's original construction: he rather takes . This difference is small but critical: following his original method, we find ourselves having to quasi-permute $h_n k_n q_n$ slices to matching the location of $R^{(n)}$. We were able to do it in a smooth measure-preserving way, but only with more than $q_n$ iterations, thus jeopardizing the polynomial estimation in $q_n$, and ultimately the obtention of all Liouville numbers. This suggest the following questions:

%\textit{Question 1}: 

%
% with the following estimation:.

\bibliography{refdyn}

\end{document}